\newacronym{rbf}{RBF}{Radial Basis Function}
\newacronym{rbfs}{RBFs}{Radial Basis Functions}
\newacronym{rkhs}{RKHS}{Reproducing Kernel Hilbert Space}
\newacronym{pca}{PCA}{Principal Component Analysis}
\newacronym{pd}{PD}{positive definite}
\newacronym{spd}{SPD}{strictly positive definite}
\newacronym{pde}{PDE}{Partial Differential Equation}
\newacronym{pdes}{PDEs}{Partial Differential Equations}
\newacronym{svr}{SVR}{Support Vector Regression}
\newcommand{\R}{\mathbb{R}}
\newcommand{\N}{\mathbb{N}}
\newcommand{\Sp}[1]{\mbox{ span }\{#1\}}
\newcommand{\calh}{\mathcal H}
\newcommand{\fa}{\hbox{ for all }}
\newcommand{\inner}[2]{\ifthenelse{\equal{#2}{}}{\left\langle\cdot,\cdot\right\rangle_{#1}}{\left\langle#2\right\rangle_{#1}}}
\newcommand{\norm}[2]{\ifthenelse{\equal{#2}{}}{\left\|\cdot\right\|_{#1}}{\left\|#2\right\|_{#1}}}
\newcommand{\onevec}{\mathbbm{1}_n}
\DeclareMathOperator*{\argmin}{arg\min}
\DeclareMathOperator*{\argmax}{arg\max}
\newtheorem{theorem}{Theorem}
\newtheorem{proposition}[theorem]{Proposition}
\newtheorem{cor}[theorem]{Corollary}
\newtheorem{definition}[theorem]{Definition} 
\newtheorem{remark}[theorem]{Remark}
\newtheorem{example}[theorem]{Example}
\title{Kernel Methods for Surrogate Modeling}
\author[1]{G. Santin \thanks{gabriele.santin@mathematik.uni-stuttgart.de, \href{https://orcid.org/0000-0001-6959-1070}{orcid.org/0000-0001-6959-1070}}}
\author[1]{B. Haasdonk \thanks{bernard.haasdonk@mathematik.uni-stuttgart.de}}
\affil[1]{Institute for Applied Analysis and Numerical Simulation, University of Stuttgart, Germany}
\begin{document}

\maketitle 
  
\begin{abstract}
This chapter deals with kernel methods as a special class of techniques for surrogate modeling. Kernel methods have proven to be 
efficient in machine learning, pattern recognition and signal analysis due to their flexibility, excellent experimental performance and elegant functional analytic 
background. These data-based techniques provide so called kernel expansions, i.e., linear combinations of kernel functions which are generated from given input-output 
point samples that may be arbitrarily scattered. In particular, these techniques are meshless, do not require or depend on a grid, hence are less prone to the curse of 
dimensionality, even for high-dimensional problems. 

In contrast to projection-based model reduction, we do not necessarily  assume a high-dimensional model, but a general function that models input-output 
behavior 
within some simulation context. This could be some micro-model in a multiscale-simulation, some submodel in a coupled system, some initialization function for solvers, 
coefficient function in \gls*{pdes}, etc.

First, kernel surrogates can be useful if the input-output function is expensive to evaluate, e.g. is a result of a finite element simulation. Here, acceleration can 
be obtained by sparse kernel expansions. Second, if a function is available only via measurements or a few function evaluation samples, kernel approximation techniques 
can provide function surrogates that allow global evaluation.

We present some important kernel approximation techniques, which are kernel interpolation, greedy kernel approximation and support vector regression. Pseudo-code is 
provided for ease of reproducibility. In order to illustrate the main features, commonalities and differences, we compare these techniques on a real-world application. 
The experiments clearly indicate the enormous acceleration potential.
\end{abstract}

\section{Introduction}
This chapter deals with kernel methods as tools to construct surrogate models of arbitrary functions, given a finite set of arbitrary samples.
 
These methods generate approximants based solely on input-output pairs of the unknown function, without geometrical constraints on the sample locations.
In particular, the surrogates do not necessarily depend on the knowledge of an high-dimensional model but only on its observed input-output behavior at the sample sites, 
and they can be applied on arbitrarily 
scattered points in high dimension.

These features are particularly useful when these methods are applied within some simulation context. For example, kernel surrogates can be useful if the input-output 
function is expensive to evaluate, e.g. is a result of a finite element simulation. Here, acceleration can be obtained by sparse kernel expansions. 
Moreover, if a function is available only via measurements or a few function evaluation samples, kernel approximation techniques can provide function surrogates that 
allow global evaluation.

Kernel methods are used with much success in Model Order Reduction, and far beyond the scope of this chapter. For example, they have been used in the modeling 
of geometry 
transformations and mesh coupling \cite{BECKERT2001125,Deparis2014,Deparis2016}, and in mesh repair methods \cite{MARCHANDISE20122376}, or in the 
approximation of stability factors and error indicators \cite{MN14,ImmanuelThesis,Drohmann2015}, where only a few samples of the exact indicators are sufficient to 
construct an efficient surrogate to be used in the online phase. Moreover, kernel methods have been combined with projection based MOR methods, e.g. to 
obtain simulation-based classification \cite{TPYP2016}, or to derive multi-fidelity Monte Carlo approximations \cite{Peherstorfer2018a}. Kernel surrogates 
have been employed in optimal control problems \cite{Schmidt2018f,Suykens2001}, in the coupling of multi-scale simulations in biomechanics \cite{Wirtz2015a, SH2017b}, 
in real time prediction for parameter identification and state estimation in biomechanical systems \cite{KSHH2017},
in gas transport problems \cite{Grundel2013}, in the reconstruction of potential energy surfaces \cite{Kowalewski2016}, in the forecasting of time stepping methods 
\cite{Bruennette2019}, in the reduction of nonlinear dynamical systems \cite{Wirtz2012a}, in uncertainty quantification \cite{Koeppel2018}, and 
for nonlinear balanced truncation of dynamical systems \cite{Bouvrie2017}. 

In further generality, there exists many kernel-based algorithms and application fields that we do not address here. Mainly, we mention the solution of \gls*{pdes}, in 
which several approaches have emerged in the last years, and which particularly allow to solve problems with unstructured grids on general 
geometries, including high dimensional manifolds (see e.g. \cite{FornbergFlyer2015, Chen2014}). Moreover, several other techniques are studied within Machine Learning, 
such as classification, density estimation, novelty detection or feature extraction (see e.g. \cite{SS02,Shawe-Taylor2004}).

Furthermore, we remark that these methods are members of the larger class of machine learning and approximation techniques, which are generally suitable to construct 
models based on samples to make prediction on new inputs. These models are usually referred to as surrogates when they are then used as replacements of the model that 
generated the data, as they are able to provide an accurate and faster response. Some examples of these techniques are classical approximation methods such as polynomial 
interpolation, which are used in this context especially in combination with sparse grids to deal with high-dimensional problems (see \cite{Garcke2012}), and (deep) 
neural network models. The latter in particular have seen a huge increase in analysis and application in the recent years. For a recent treatment of deep learning, we 
refer e.g. to \cite{Goodfellow2016}.

Despite these very diverse applications and methodologies, kernel methods can be analyzed to some extent in the common framework of Reproducing Kernel Hilbert 
spaces and, 
although the focus of this chapter will be on the construction of sparse surrogate models, parts of the following discussion can be the starting point for the 
analysis of other techniques. 

In general terms, kernel methods can be viewed as nonlinear versions of linear algorithms. 
As an example, assume to have some set $X_n:=\{x_k\}_{k=1}^n\subset\R^d$ of data points and target data values 
$Y_n:=\{y_k\}_{k=1}^n\subset\R$.
We can construct a surrogate $s:\R^d\to\R$ that predicts new data via linear regression, i.e., find $w\in\R^d$ s.t. $s(x) := \inner{}{w, x}$, where $\inner{}{}$ is 
the scalar product in $\R^d$. A good surrogate model $s$ will give predictions such that $|s(x_k) - y_k|$ is small. If we can write $w\in\R^d$ 
as 
$
w= \sum_{j=1}^n \alpha_j x_j
$
for a set of coefficients $\left(\alpha_i\right)_{i=1}^n\in\R^n$, then $s$ can be rewritten as 
\begin{align*}
s(x) := \sum_{j=1}^n \alpha_j \inner{}{x_j, x}.
\end{align*}
Note that this formulation includes also regression with an offset (or bias) $b\neq 0$, which can be written in this form by an extended representation as
\begin{align*}
s(x) := \inner{}{w, x} + b =:\inner{}{\bar w, \bar x},
\end{align*}
where $\bar x:=(x, 1)^T\in\R^{d+1}$ and $\bar w:=(w, b)^T\in\R^{d+1}$.

Using now the Gramian matrix $A\in\R^{n\times n}$ with entries $A_{ij}:=\inner{}{x_i, x_j}$ and rows $A_i^T\in\R^n$, we look for the surrogate $s$ which minimizes 
\begin{align*}
\sum_{i=1} ^n\left(s(x_i) - y_i \right)_2^2 = \sum_{i=1}^n \left(A_i^T \alpha - y_i \right)_2^2 = \left\|A\alpha - y\right\|_2^2.
\end{align*}
Additionally, a regularization term can be added to keep the norm of $\alpha$ small, e.g. in terms of the value $\alpha^T A \alpha$.
Thus, the surrogate can be characterized as the solution of the optimization problem 
\begin{align*}
\min_{\alpha\in\R^n} \left\|A\alpha - y\right\|_2^2 + \lambda  \alpha^T A \alpha,
\end{align*}
i.e., $\alpha = (A + \lambda I)^{-1} y$ if $\lambda>0$.

In many cases this (regularized) linear regression is not sufficient to obtain a good surrogate. A possible idea is to try to combine this linear, simple method with a 
nonlinear function which maps the data to a higher dimensional space, where the hope is that the image of the data can be processed linearly. For this we consider a 
so-called feature map $\Phi:\R^d\to H$, where $H$ is a Hilbert space, and apply the same algorithm to the transformed data $\Phi(X_n):=\{\Phi(x_i)\}_{i=1}^n$ 
with the same values $Y_n$. Since the algorithm depends on $X_n$ only via the Gramian $A$, it is sufficient to replace it with the new Gramian 
$A_{ij}:=\inner{H}{\Phi(x_i), \Phi(x_j)}$ to obtain a nonlinear algorithm. 

We will see that $\inner{H}{\Phi(x), \Phi(y)}$ defines in fact a positive definite 
kernel, and if any numerical procedure can be written in terms of inner products of the inputs, it can be transformed in the same way into a new nonlinear algorithm 
simply by replacing the inner products with kernel evaluations (the so-called kernel trick). We will discuss the details of this procedure in 
the next sections in the case of interpolation and Support Vector Regression, but this immediately gives a glance of 
the ample spectrum of algorithms in the class of kernel methods.

This chapter is organized as follows. Section \ref{sec:background} covers the basic notions on kernels and kernel-based spaces which are necessary for the 
development and understanding of the algorithms. The next Section \ref{sec:surrogates_general} presents the general ideas and tools to construct kernel 
surrogates as characterized by the Representer Theorem, and these ideas are specialized to the case of kernel interpolation in Section \ref{sec:interp} and Support 
Vector Regression in Section \ref{sec:svr}. In both cases, we provide the theoretical foundations as well as the algorithmic description of the methods, with 
particular attention to techniques to enforce sparsity in the model. These surrogates can be used to perform various analyses of the full model, and we give some 
examples in Section \ref{sec:model_analysis}. Section \ref{sec:parameter_selection} presents a general strategy to choose the various 
parameters defining the model, whose tuning can be critical for a successful application of the algorithms. Finally, we discuss in Section \ref{sec:numerics} 
the numerical results of the methods on a real application dataset, comparing training time (offline), prediction time (online), and accuracy.

\section{Background on kernels}\label{sec:background}
We start by introducing some general facts of positive definite kernels. Further details on the general analytical theory of reproducing kernels can be found 
e.g. in the 
recent monograph \cite{Saitoh2016}, while the books \cite{Fasshauer2015,Wendland2005} and \cite{SS02,Steinwart2008} contain a treatment of kernel theory from 
the point of view of pattern analysis and scattered data interpolation, respectively. 

\subsection{Positive definite kernels}
Given a nonempty set $\Omega$, which can be a subset of $\R^d$, $d\in\N$, but also a set of structured objects such as strings or graphs, a real- and scalar-valued 
kernel $K$ on $\Omega$ is a bivariate symmetric function $K:\Omega\times\Omega\to\R$, i.e., $K(x, y) = K(y, x)$ for all $x, y\in\Omega$. For our purposes, we are 
interested in (strictly) positive definite kernels, defined as follows.

\begin{definition}[Positive definite kernels]\label{def:posdefker}
Let $\Omega$ be a nonempty set. A kernel $K$ on $\Omega$ is \gls*{pd} on $\Omega$ if for all $n\in\N$ and for any set of $n$ pairwise distinct elements 
$X_n:=\{x_i\}_{i=1}^n\subset \Omega$, the kernel matrix (or Gramian matrix) $A:=A_{K, X_n}\in\R^{n\times n}$ defined as $A_{ij}:=K(x_i, x_j)$, $1\leq i,j\leq 
n$, is 
positive semidefinite, i.e., for all vectors $\alpha := \left(\alpha_i\right)_{i=1}^n\in \R^n$ it holds
\begin{equation}\label{eq:def_of_pd}
\alpha^T A \alpha = \sum_{i,j=1}^n \alpha_i\alpha_j K(x_i, x_j) \geq 0.
\end{equation}
The kernel is \gls*{spd} if the kernel matrix is positive definite, i.e., \eqref{eq:def_of_pd} holds with strict inequality when $\alpha\neq 0$.
\end{definition}

The further class of conditionally (strictly) positive definite kernels is also of interest in certain contexts. We refer to \cite[Chapter 8]{Wendland2005}
 for their extensive treatment, and we just mention that they are defined as above, except that the condition \eqref{eq:def_of_pd} has to be satisfied only for 
the subset of coefficients $\alpha$ which match a certain orthogonality condition. When this condition is defined with respect to a space of polynomials of degree 
$m\in\N$, 
the resulting kernels are used e.g. to guarantee a certain polynomial exactness of the given approximation scheme, and they are often employed in 
certain methods for the solution of \gls*{pdes}. 

\subsection{Examples and construction of kernels}\label{sec:construct_kernels}
Despite the abstract definition, there are several ways to construct functions $K:\Omega\times\Omega\to\R$ which are (strictly) positive definite kernels, and usually 
the proper choice of the kernel is a crucial step in the successful application of the method. We list here a general strategy to construct kernels, and some notable 
examples.

An often used, constructive approach to design a new kernel is via feature maps as follows.
\begin{proposition}[Kernels via feature maps]
Let $\Omega$ be a nonempty set. A feature map $\Phi$ is any function $\Phi:\Omega\to H$, where $\left(H, \inner{H}{}\right)$ is any Hilbert space (the feature space). 
The function 
\begin{align*}
K(x,y):= \inner{H}{\Phi(x), \Phi(y)}\;\;x,y\in \Omega,
\end{align*}
is a \gls*{pd} kernel on $\Omega$.
\end{proposition}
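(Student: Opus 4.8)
The plan is to check directly the two conditions defining a \gls*{pd} kernel in Definition~\ref{def:posdefker}, namely symmetry and positive semidefiniteness of every kernel matrix. Both are immediate consequences of the axioms of the (real, symmetric, bilinear) inner product $\inner{H}{}$ on the feature space $H$, so the proof is essentially a two-line verification.

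First I would verify symmetry: for arbitrary $x, y \in \Omega$, symmetry of $\inner{H}{}$ yields $K(x,y) = \inner{H}{\Phi(x), \Phi(y)} = \inner{H}{\Phi(y), \Phi(x)} = K(y,x)$, so $K$ is a bivariate symmetric function on $\Omega$, i.e.\ a kernel in the sense introduced above. Next I would establish condition \eqref{eq:def_of_pd}. Fix $n\in\N$, pairwise distinct points $X_n=\{x_i\}_{i=1}^n\subset\Omega$ and a coefficient vector $\alpha=(\alpha_i)_{i=1}^n\in\R^n$, and let $A$ be the associated kernel matrix. Using bilinearity of $\inner{H}{}$ to pull the finite sums out of the inner product,
\begin{align*}
\alpha^T A \alpha &= \sum_{i,j=1}^n \alpha_i\alpha_j \inner{H}{\Phi(x_i), \Phi(x_j)} \\
&= \inner{H}{\sum_{i=1}^n \alpha_i \Phi(x_i),\ \sum_{j=1}^n \alpha_j \Phi(x_j)} = \norm{H}{\sum_{i=1}^n \alpha_i \Phi(x_i)}^2 \geq 0,
\end{align*}
where the last inequality is just nonnegativity of the norm induced by $\inner{H}{}$. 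This is exactly \eqref{eq:def_of_pd}, hence $A$ is positive semidefinite and, since $n$, $X_n$ and $\alpha$ were arbitrary, $K$ is \gls*{pd} on $\Omega$.

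I do not expect any real obstacle here; the computation is a single application of bilinearity followed by positivity of the inner product. The only points worth keeping in mind are, first, that the statement claims positive \emph{semi}definiteness only: strictness would additionally require $\{\Phi(x_i)\}_{i=1}^n$ to be linearly independent in $H$ for pairwise distinct $x_i$, which is false for a general feature map (e.g.\ a constant $\Phi$), so it is correctly not asserted; and second, that everything is phrased over $\R$, so no complex conjugation enters — over $\C$ the same computation works verbatim with conjugate-symmetry of $\inner{H}{}$ and produces a Hermitian positive semidefinite matrix. One could also remark that the nontrivial converse, that every \gls*{pd} kernel arises in this way from some feature map into its \gls*{rkhs}, is the deeper statement and is deferred to the subsequent development.
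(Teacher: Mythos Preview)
Your proof is correct and follows exactly the same approach as the paper's own proof, which simply states that $K$ is symmetric and positive definite because the inner product is bilinear, symmetric and positive definite. You have merely spelled out explicitly the computation $\alpha^T A\alpha = \norm{H}{\sum_i \alpha_i\Phi(x_i)}^2\geq 0$ that the paper leaves implicit, along with some useful side remarks.
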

\begin{proof}
$K$ is a \gls*{pd} kernel since it is symmetric and positive definite, because the inner product is bilinear, symmetric and positive definite.
\end{proof}
In many cases, $H$ is either $\R^m$ with very large $m$ or even an infinite dimensional Hilbert space. The computation of the possibly expensive $m$- or 
infinite-dimensional inner product can be 
avoided if a closed form for $K$ can be obtained. This implies a significant reduction of the computational time required to evaluate the kernel and thus to execute 
any kind of algorithm.

We see now some examples.
\begin{example}[Expansion kernels]
The construction comprises finite dimensional linear combinations, i.e., for a set of functions $\{v_j\}_{j=1}^m:\Omega\to\R$, the function 
$K(x, y) := \sum_{k=1}^m v_k(x) v_k(y)$ is a positive definite kernel, having a feature map
\begin{align}\label{eq:finite_feature_map}
\Phi(x):=\left(v_1(x), v_2(x), \dots, v_m(x)\right)^T\in H:=\R^m.
\end{align}
This idea can be extended to an infinite number of functions provided $\{v_j(x)\}_{j=1}^\infty\in H:=\ell_2(\N)$ uniformly in $\Omega$, and the resulting 
kernels are 
called 
Hilbert-Schmidt or expansion kernels, which can be proven to be even \gls*{spd} under additional conditions (see \cite{Schaback2002a}).
As an example in $d=1$, we mention the Brownian Bridge kernel $K(x, y) := \max(x, y) - xy$, defined with a feature map $v_j(x):= \sqrt{2} (j\pi)^{-1}\sin(j \pi x)$ for 
$j\in\N$, which is SPD on $\Omega:=(0,1)$. We remark that the kernel can be extended to $(0,1)^d$ with $d>1$ using a tensor product of one-dimensional kernels.

This feature map representation proves also that $dim(H)=:m<\infty$ means that the kernel is not \gls*{spd} in general: e.g., if $X_n$ contains $n$ pairwise 
distinct points and $m<n$, then the vectors $\{\Phi(x_i)\}_{i=1}^n$ can not be linearly independent, and thus the kernel matrix is singular.
\end{example}

\begin{example}[Kernels for structured data]
Feature maps are also employed to construct positive definite kernels on sets $\Omega$ of structured data, such as sets of strings, graphs, or any other object. 
For example, the convolution kernels introduced in \cite{Gaertner2003, Haussler1999} consider a finite set of features $v_1(x), \dots, 
v_m(x)\in\R$ of an object $x\in\Omega$, and define a feature map exactly as in \eqref{eq:finite_feature_map}. 
\end{example}

\begin{example}[Polynomial kernels]
For $a\geq 0$, $p\in\N$, $x,y\in\R^d$, the polynomial kernel
\begin{align}\label{eq:poly_kernel}
K(x,y):=\left(\inner{}{x,y} + a\right)^p = \left(\sum_{i=1}^d x^{(i)} y^{(i)} + a\right)^p,\;\;x:=\left(x^{(1)},\dots, x^{(d)}\right)^T, 
\end{align}
is \gls*{pd} on any $\Omega\subset \R^d$. It is a $d$-variate polynomial of degree $p$, which contains the monomial terms of degrees  $j:=\left(j^{(1)}, \dots, 
j^{(d)}\right)\in J$, for a certain set $J\subset \N_0^d$. If $m:=|J|$, a feature space is $\R^{m}$ with feature map
$$
\Phi(x):=\left(\sqrt{a_{1}} x^{j_1}, \dots, \sqrt{a_{m}} x^{j_m}\right)^T.
$$
for some positive numbers $\{a_j\}_{j=1}^m$ and monomials $x^{j_m}:=\prod_{i=1}^d (x^{(i)})^{j_m^{(i)}}$.

Observe that using the closed form \eqref{eq:poly_kernel} of the kernel instead of the feature map is very convenient, since we work with $d$-dimensional instead of 
$m$-dimensional vectors, 
where possibly $m:=|J| =  {{d + p}\choose{d}}=\dim(\mathbb P_p(\R^d))\gg d$.
\end{example}

\begin{example}[RBF kernels]
For $\Omega\subset\R^d$ in many applications the most used kernels are translational invariant kernels, i.e., there exist a function $\phi: \R^d 
\to\R$ with
\begin{align*}
K(x, y) := \phi(x - y), x, y \in\Omega,
\end{align*}
and in particular radial kernels, i.e., there exist a univariate function $\phi: \R_{\geq 0} \to\R$ with
\begin{align*}
K(x, y) := \phi(\|x - y\|), x, y \in\Omega.
\end{align*}
A radial kernel, or \gls*{rbf}, is usually defined up to a shape parameter $\gamma > 0$  that controls the
scale of the kernel via $K(x, y) := \phi(\gamma\|x - y\|)$. 

The main example of such kernels is the Gaussian $K(x, y) := e^{-\gamma^2 \|x-y\|^2}$, which is in fact strictly positive definite.
An explicit feature map has been computed in \cite{Steinwart2006}: If $\Omega\subset\R^d$ is nonempty, a feature map is the function 
$\Phi_{\gamma}:\Omega\to L_2(\R^d)$ defined by  
\begin{align*}
\Phi_{\gamma}(x):= \frac{(2\gamma)^{\frac{d}{2}}}{\pi^{\frac{d}{4}}} \exp\left(-2\gamma^2 \|x-\cdot\|^2\right),\;\;x\in\Omega.
\end{align*}
In this case it is even more evident how working with the closed form of $K$ is much more efficient than working with a feature map and computing $L_2$-inner products.

\gls*{rbf} kernels offer a significant easiness of implementation in arbitrary space dimension $d$. The evaluation of the kernel $K(\cdot, x)$, $x\in\R^d$, on a vector 
of $n$ points can indeed by realized by first computing a distance vector $D\in\R^n$, $D_i := \|x-x_i\|$, and then applying the univariate function $\phi$ on 
$D$. A 
discussion and comparison of different algorithms (in Matlab) to efficiently compute a distance matrix can be found in \cite[Chapter 4]{Fasshauer2015}, and most 
scientific computing languages comprise a built-in implementation (such as \verb+pdist2+\footnote{\url{https://www.mathworks.com/help/stats/pdist2.html}} 
in Matlab and {\verb+distance_matrix+}\footnote{\url{https://docs.scipy.org/doc/scipy/reference/generated/scipy.spatial.distance_matrix.html}} in Scipy).

Translational invariant and \gls*{rbf} kernels can be often analyzed in terms of their Fourier transforms, which provide proofs of their strict positive 
definiteness via the Bochner Theorem (see e.g. \cite[Chapter 6]{Wendland2005}), and connections to certain Sobolev spaces, as we will briefly see in Section 
\ref{sec:rkhs}.

Among various \gls*{rbf} kernels, there are also compactly supported kernels, i.e., $K(x,y)= 0$ if $\|x-y\|>1/\gamma$, which produce sparse kernel 
matrices if $\gamma$ is large enough. The most used ones are the Wendland kernels introduced in \cite{Wendland1995a}, which are even radial polynomial 
within their support.
\end{example}

There are, in addition,  various operations to combine positive definite kernels and obtain new ones. For example, sums and 
products of positive definite kernels and multiplication by a positive constant $a>0$ produce again positive definite kernels. 
Moreover, if $K'$ is a positive definite kernel and $K''$ is symmetric with $K'\preccurlyeq K''$ (i.e., $K:= K'' - K'$  is PD) then also $K''$ is positive definite. 
Furthermore, if $\Omega=\Omega'\times\Omega''$  and $K'$, $K''$ are \gls*{pd} kernels on $\Omega'$, $\Omega''$, then 
$K(x, y):= K'(x',y')K''(x'',y'')$ and $K(x, y):= K'(x',y')+K''(x'',y'')$ are also \gls*{pd} kernels on $\Omega$, i.e., kernels can be defined to respect tensor product 
structures of the input. 

Further details and examples can be found in \cite[Chapters 1--2]{Saitoh2016}.

\subsection{Kernels and Hilbert spaces}\label{sec:rkhs}
Most of the analysis of kernel-based methods is possible through the connection with certain Hilbert spaces. We first give the following definition.

\begin{definition}[Reproducing Kernel Hilbert Space]\label{def:rkhs}
Let $\Omega$ be a nonempty set, $\calh$ an Hilbert space of functions $f:\Omega\to\R$ with inner product $\inner{\calh}{}$. Then $\calh$ is called a 
\gls*{rkhs} on $\Omega$ if there exists a function $K:\Omega\times\Omega\to\R$ (the reproducing kernel) such that 
\begin{enumerate}
 \item\label{prop:k_in_rkhs} $K(\cdot, x)\in \calh$ for all $x\in \Omega$,
 \item\label{prop:reproducing} $\inner{\calh}{f, K(\cdot, x)} = f(x)$ for all $x\in \Omega$, $f\in \calh$ (reproducing property).
\end{enumerate}
\end{definition}

The reproducing property is equivalent to state that, for $x\in\Omega$, the $x$-translate  $K(\cdot, x)$ of the kernel is the Riesz representer of the evaluation
functional $\delta_x:\calh\to\R$, $\delta_x(f):=f(x)$ for $f\in\calh$ , that is hence a continuous functional in $\calh$. Also the converse holds, and the following 
result gives an abstract criterion to 
check if a Hilbert space is a \gls*{rkhs}.
\begin{theorem}
An Hilbert space of functions $\Omega\to\R$ is a \gls*{rkhs} if and only if the point evaluation functionals are continuous in $\calh$ for all $x\in\Omega$, i.e., 
$\delta_x\in\calh'$, the dual space of $\calh$. Moreover, the reproducing kernel $K$ of $\calh$ is strictly positive definite if and only if the functionals $\{\delta_x: 
x\in\Omega\}$ are linearly 
independent in $\calh'$.
\end{theorem}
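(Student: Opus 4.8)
The plan is to establish the two equivalences one at a time, in both cases using the Riesz representation theorem as the workhorse, and for the second equivalence the elementary fact that a Gram matrix is positive definite exactly when the generating vectors are linearly independent.

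For the first equivalence, the forward direction is immediate from the reproducing property: if $\calh$ is a \gls*{rkhs} with kernel $K$, then $\delta_x(f) = f(x) = \inner{\calh}{f, K(\cdot, x)}$, so the Cauchy--Schwarz inequality gives $|\delta_x(f)| \le \norm{\calh}{K(\cdot,x)}\, \norm{\calh}{f}$, hence $\delta_x\in\calh'$. For the converse I would assume $\delta_x\in\calh'$ for every $x\in\Omega$ and invoke the Riesz representation theorem to obtain, for each $x$, a unique element $k_x\in\calh$ with $f(x)=\delta_x(f)=\inner{\calh}{f,k_x}$ for all $f\in\calh$. Defining $K(\cdot,x):=k_x$ then yields property~\ref{prop:k_in_rkhs} of Definition~\ref{def:rkhs} by construction and property~\ref{prop:reproducing} is precisely the identity just displayed; symmetry of $K$ follows from $K(x,y)=k_y(x)=\inner{\calh}{k_y,k_x}=\inner{\calh}{k_x,k_y}=k_x(y)=K(y,x)$, and uniqueness of the Riesz representer gives uniqueness of $K$. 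Hence $\calh$ is a \gls*{rkhs}.

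For the second equivalence, the key observation is that for pairwise distinct $x_1,\dots,x_n$ the kernel matrix coincides with the Gram matrix of the representers, $A_{ij}=K(x_i,x_j)=\inner{\calh}{k_{x_i},k_{x_j}}$, so that $\alpha^T A\alpha = \norm{\calh}{\sum_{i=1}^n \alpha_i k_{x_i}}^2$. This shows $A$ is always positive semidefinite and is positive definite if and only if $k_{x_1},\dots,k_{x_n}$ are linearly independent in $\calh$. Since the Riesz map is a linear isometric isomorphism between $\calh'$ and $\calh$ sending $\delta_{x_i}$ to $k_{x_i}$, linear independence of $\{k_{x_i}\}$ in $\calh$ is equivalent to linear independence of $\{\delta_{x_i}\}$ in $\calh'$. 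Quantifying over all $n$ and all pairwise distinct tuples, $K$ is \gls*{spd} exactly when every finite subset of $\{\delta_x:x\in\Omega\}$ is linearly independent, i.e., when the family $\{\delta_x:x\in\Omega\}$ is linearly independent in $\calh'$ (note this in particular forces $x\mapsto\delta_x$ to be injective, consistent with the pairwise-distinctness in Definition~\ref{def:posdefker}).

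The steps are all routine once the representer $k_x$ is in hand; the only place that needs a little care is the bookkeeping between the "every finite subset is linearly independent" formulation of independence of an infinite family in $\calh'$ and the "for all $n$, for all pairwise distinct points" formulation in the definition of strict positive definiteness, together with checking that the degenerate case of repeated points is automatically excluded. I do not expect any genuine obstacle beyond this.
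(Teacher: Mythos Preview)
Your proposal is correct and follows essentially the same approach as the paper, which gives only a brief sketch: the reproducing property for the first equivalence and the nonvanishing of the quadratic form for the second. You have supplied the standard details---Riesz representation for the converse of the first part, and the Gram-matrix identity $\alpha^T A\alpha = \norm{\calh}{\sum_i \alpha_i k_{x_i}}^2$ together with the Riesz isomorphism for the second---which is exactly what the paper's hint is pointing toward.
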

\begin{proof}
The first part is clear from the reproducing property, while strict positive definiteness can be checked by verifying that the quadratic form in Definition 
\ref{def:posdefker} can not be zero for $\alpha\neq 0$ if $\{\delta_x: x\in\Omega\}$ are linearly independent.
\end{proof}

We see two concrete examples.
\begin{example}[Finite dimensional spaces]
Any finite dimensional Hilbert space $\calh$ of functions on a nonempty set $\Omega$ is a \gls*{rkhs}. If $m:=\dim(\calh)$ and 
$\{v_j\}_{j=1}^m$ is an orthonormal basis, then a reproducing kernel is given by
\begin{align*}
K(x,y) := \sum_{j=1}^mv_j(x) v_j(y), \; x,y\in\Omega.
\end{align*}
Indeed, the two properties of Definition \ref{def:rkhs} can be easily verified by direct computation.
\end{example}

\begin{example}[The Sobolev space $H_0^1(0,1)$]
The Sobolev space $H_0^1(0,1)$ with inner product $\inner{H_0^1}{f, g} := \int_0^1 f'(y) g'(y) dy$ is a \gls*{rkhs} with the Brownian Bridge kernel 
\begin{align*}
K(x,y):=\min(x, y) - x y, \;\;x,y\in(0,1)
\end{align*}
as reproducing kernel (see e.g. \cite{CaFaMcC2014}). Indeed, $K(\cdot, x)\in H_0^1(0,1)$, and the reproducing property \eqref{prop:reproducing} follows by 
explicitly computing the inner product.
\end{example}

The following result proves that reproducing kernels are in fact positive definite kernels in the sense of Definition \ref{def:posdefker}.
Moreover, the first two properties are useful to deal with the various type of approximants of Section \ref{sec:interp} and Section \ref{sec:svr}, which 
will be 
exactly of this form.

\begin{proposition}[]\label{prop:two_rkhs}
Let $\calh$ be a RKHS on $\Omega$ with reproducing kernel $K$. 
Let $n, n'\in\N$, $\alpha\in\R^n$, $\alpha'\in\R^{n'}$,  $X_n, X_{n'}' \subset \Omega$, and define the functions
$$
f(x):=\sum_{i=1}^n \alpha_i K(x, x_i),\;\; g(x):=\sum_{j=1}^{n'} \alpha_j' K(x, x_j'),\;\; x\in\Omega.
$$
Then we have the following:
\begin{enumerate}
\item\label{prop:f_in_h} $f, g \in \calh$,
\item\label{prop:f_inner_g} $\inner{\calh}{f, g} = \sum_{i=1}^n\sum_{j=1}^{n'} \alpha_i\alpha_j' K(x_i, x_j')$.
\item\label{prop:rk_is_pd_and_unique} $K$ is the unique reproducing kernel of $\calh$ and it is a positive definite kernel. 
\end{enumerate}
\end{proposition}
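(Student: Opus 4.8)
The plan is to obtain all three assertions directly from the two defining properties of a RKHS in Definition \ref{def:rkhs} together with the Hilbert space structure of $\calh$; no further theory is needed.

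I would first dispose of claim \ref{prop:f_in_h}: property \ref{prop:k_in_rkhs} gives $K(\cdot, x_i) \in \calh$ for each $i$, and since $\calh$ is a linear space the finite linear combination defining $f$ lies in $\calh$, and likewise for $g$. For claim \ref{prop:f_inner_g} I would expand $\inner{\calh}{f, g}$ by bilinearity of the inner product into $\sum_{i=1}^n \sum_{j=1}^{n'} \alpha_i \alpha_j' \inner{\calh}{K(\cdot, x_i), K(\cdot, x_j')}$, and then evaluate each term by invoking the reproducing property \ref{prop:reproducing} on the function $K(\cdot, x_i) \in \calh$ at the point $x_j'$, which gives $\inner{\calh}{K(\cdot, x_i), K(\cdot, x_j')} = K(x_j', x_i)$. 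At this stage I would pause to record the symmetry $K(x, y) = K(y, x)$, which falls out of the very same identity combined with the symmetry of the real inner product; substituting $K(x_i, x_j')$ for $K(x_j', x_i)$ in the double sum then yields the claimed formula.

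For claim \ref{prop:rk_is_pd_and_unique} I would read off positive definiteness as the diagonal case of claim \ref{prop:f_inner_g}: choosing $n' = n$, $X_{n'}' = X_n$ and $\alpha' = \alpha$ shows that the quadratic form $\sum_{i,j=1}^n \alpha_i \alpha_j K(x_i, x_j)$ equals $\inner{\calh}{f, f} = \norm{\calh}{f}^2 \geq 0$, which together with the symmetry already noted is exactly condition \eqref{eq:def_of_pd}. For uniqueness I would assume $K_1, K_2$ are both reproducing kernels of $\calh$ and note that, for each fixed $x \in \Omega$, the element $h_x := K_1(\cdot, x) - K_2(\cdot, x) \in \calh$ satisfies $\inner{\calh}{f, h_x} = f(x) - f(x) = 0$ for every $f \in \calh$; taking $f = h_x$ forces $\norm{\calh}{h_x} = 0$, hence $h_x$ is the zero function of $\calh$, i.e. $K_1(\cdot, x) = K_2(\cdot, x)$, for all $x$.

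I do not anticipate a genuine obstacle here: every step is a one-line verification. The only places that call for a modicum of care are ordering the argument so that the symmetry of $K$ is established before it is used in claim \ref{prop:f_inner_g}, and observing that vanishing of the $\calh$-norm entails pointwise vanishing — which for a space of genuine functions is immediate, but which, notably, also follows from the reproducing property together with Cauchy--Schwarz.
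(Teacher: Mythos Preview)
Your proof is correct and follows essentially the same route as the paper: parts \ref{prop:f_in_h} and \ref{prop:f_inner_g} come straight from linearity and the reproducing property, and positive definiteness is read off from $\norm{\calh}{f}^2\geq 0$. The only minor difference is in the uniqueness step: the paper uses the one-line cross-application $K(x,y)=\inner{\calh}{K(\cdot,y),K'(\cdot,x)}=K'(x,y)$ (applying each kernel's reproducing property once), whereas you show $h_x:=K_1(\cdot,x)-K_2(\cdot,x)$ is orthogonal to all of $\calh$ and hence zero --- both arguments are standard and equally short.
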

\begin{proof}
The first two properties follow from Definition \ref{def:rkhs}, and in particular from $\calh$ being a linear space and from the bilinearity of $\inner{\calh}{}$.

For Property \eqref{prop:rk_is_pd_and_unique}, the fact that $K$ is symmetric and positive definite, hence a \gls*{pd} kernel, follows from Property 
\eqref{prop:k_in_rkhs} of Definition 
\ref{def:rkhs}, and from the symmetry and positive definiteness of the inner product. Moreover, the reproducing property implies that, if $K, K'$ are two reproducing 
kernels of $\calh$, then for all $x,y\in\Omega$ it holds 
\begin{align*}
K(x, y) = \inner{\calh}{K(\cdot, y), K'(\cdot, x)} = K'(x, y).
\end{align*}
\end{proof}

It is common in applications to follow instead the opposite path, i.e., to start with a given \gls*{pd} kernel, and try to see if an appropriate \gls*{rkhs} 
exists. 
This is in fact always the case, as proven by the following fundamental theorem from \cite{Aronszajn1950}.

\begin{theorem}[RKHS from kernels -- Moore-Aronszajn Theorem]\label{th:aronszajn}
Let $\Omega$ be a nonempty set and $K:\Omega\times\Omega\to \R$ a positive definite kernel. Then there exists a unique \gls*{rkhs} $\calh:=\calh_K(\Omega)$ with 
reproducing kernel $K$. 
\end{theorem}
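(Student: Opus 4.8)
The plan is to prove existence by an explicit construction — completing the linear span of the kernel sections under the natural inner product — and then to derive uniqueness from the reproducing property. First I would introduce the pre-Hilbert space $H_0 := \Sp{K(\cdot, x) : x\in\Omega}$ of finite linear combinations of kernel sections, and equip it with the bilinear form
\[
\left\langle \sum_{i=1}^n \alpha_i K(\cdot, x_i),\ \sum_{j=1}^{n'} \alpha_j' K(\cdot, x_j')\right\rangle_{\calh} := \sum_{i=1}^n\sum_{j=1}^{n'}\alpha_i\alpha_j' K(x_i, x_j').
\]
The first point to settle is that this is well defined, since the representation of an element of $H_0$ as such a combination is not unique. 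The key observation is that the right-hand side equals $\sum_j \alpha_j' f(x_j')$ for $f := \sum_i\alpha_i K(\cdot,x_i)$, and also equals $\sum_i \alpha_i g(x_i)$ for $g := \sum_j\alpha_j' K(\cdot,x_j')$; hence it depends only on the two functions, not on the chosen coefficients. The same computation already yields the reproducing identity $\inner{\calh}{f, K(\cdot,x)} = f(x)$ for every $f\in H_0$ and $x\in\Omega$.

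Next I would verify that $\inner{\calh}{}$ is a genuine inner product on $H_0$. Symmetry and bilinearity are immediate, and $\inner{\calh}{f,f} = \alpha^T A \alpha \geq 0$ is precisely the positive semidefiniteness of the kernel matrix from Definition \ref{def:posdefker}. Definiteness is the only nonroutine point, and I would obtain it from the Cauchy--Schwarz inequality for the (so far only positive semidefinite) form, combined with the reproducing identity: for $f\in H_0$,
\[
|f(x)|^2 = \left|\inner{\calh}{f, K(\cdot,x)}\right|^2 \leq \inner{\calh}{f,f}\,\inner{\calh}{K(\cdot,x),K(\cdot,x)} = \inner{\calh}{f,f}\,K(x,x),
\]
so $\inner{\calh}{f,f}=0$ forces $f\equiv 0$. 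The same estimate shows that each evaluation functional $\delta_x$ is bounded on $H_0$, with norm at most $\sqrt{K(x,x)}$.

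I would then let $\calh$ be the abstract metric completion of $(H_0, \inner{\calh}{})$ and realise it concretely as a space of functions on $\Omega$; this identification is the step I expect to be the main obstacle. Since $\delta_x$ is bounded on the dense subspace $H_0$, it extends uniquely to a bounded linear functional on $\calh$, still written $\delta_x$, and I would send an abstract element $f\in\calh$ to the function $x\mapsto\delta_x(f)$. The point to check is that this assignment is injective: if $\delta_x(f)=0$ for all $x$, then $f$ is orthogonal to every $K(\cdot,x)$ and hence to the dense subspace $H_0$, so $f=0$; one also checks that on $H_0$ the assignment is the identity, so $H_0$ sits inside $\calh$ as a dense subspace. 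Consequently $\calh$ may be viewed as a Hilbert space of functions $\Omega\to\R$ with $K(\cdot,x)\in\calh$ for all $x$, and the reproducing property holds on $H_0$ and extends to all of $\calh$ because both sides of $\inner{\calh}{f,K(\cdot,x)} = f(x)$ are continuous in $f$. Thus $\calh$ is an \gls*{rkhs} with reproducing kernel $K$.

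Finally, for uniqueness I would argue that any \gls*{rkhs} $\mathcal G$ on $\Omega$ with reproducing kernel $K$ must coincide with $\calh$. By the first defining property of an \gls*{rkhs} (Definition \ref{def:rkhs}) and linearity, $H_0\subseteq\mathcal G$; by Proposition \ref{prop:two_rkhs} the $\mathcal G$-inner product restricted to $H_0$ is exactly $\inner{\calh}{}$; and $H_0$ is dense in $\mathcal G$, since any $g\in\mathcal G$ orthogonal to $H_0$ satisfies $g(x)=\inner{\mathcal G}{g,K(\cdot,x)}=0$ for all $x$, hence $g=0$. Therefore $\mathcal G$ is the completion of $H_0$ under $\inner{\calh}{}$, i.e. $\mathcal G=\calh$ with the isometry acting as the identity on functions. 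I do not expect any real difficulty here beyond being careful that the completion of $H_0$ is intrinsically the same object, independent of the ambient \gls*{rkhs} in which $H_0$ is considered.
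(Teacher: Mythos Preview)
Your proposal is correct and follows essentially the same construction sketched in the paper's proof: define $\calh_0 := \Sp{K(\cdot, x): x\in\Omega}$, equip it with the bilinear form \eqref{eq:bilinear_form}, verify it is an inner product, complete, and identify the completion as an \gls*{rkhs} with kernel $K$. Your treatment is in fact more thorough than the paper's sketch---in particular your uniqueness argument (density of $H_0$ in any \gls*{rkhs} with kernel $K$) is the genuine content behind what the paper attributes somewhat loosely to Proposition~\ref{prop:two_rkhs}.
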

\begin{proof}
The theorem was first proven in \cite{Aronszajn1950}, to which we refer for a detailed proof. 
The idea is to deduce that, by Property \eqref{prop:f_in_h} of Proposition \ref{prop:rk_is_pd_and_unique}, a candidate \gls*{rkhs} $\calh$ of $K$ needs to contain the 
linear space
\begin{align*}
 \calh_0:=\Sp{K(\cdot, x): x\in\Omega}
\end{align*}
of finite linear combinations of kernel translates. Moreover, from Property \eqref{prop:f_inner_g} of Proposition \ref{prop:two_rkhs}, the inner product on 
this $\calh_0$ needs to satisfy 
\begin{align}\label{eq:bilinear_form}
\inner{\calh}{f, g} &= \sum_{i=1}^n\sum_{j=1}^{n'} \alpha_i\alpha_j' K(x_i, x_j').
\end{align}
With this observation in mind, the idea of the construction of $\calh$ is to start by $\calh_0$, prove that \eqref{eq:bilinear_form} defines indeed an inner product on 
$\calh_0$, and that the completion of $\calh_0$ w.r.t. this inner product is a \gls*{rkhs} having $K$ as reproducing kernel. Uniqueness then follows from 
Property \eqref{prop:rk_is_pd_and_unique} of the same proposition. 
\end{proof}
As it is common in the approximation literature, we will sometimes refer to this unique $\calh$ as the native space of the kernel $K$ on $\Omega$.

\begin{remark}[Kernel feature map]\label{prop:kernel_feature_map}
Among other consequences, this construction allows to prove that any \gls*{pd} kernel is generated by at least one feature map. Indeed, the function 
$\Phi: \Omega\to \calh$, $\Phi(x) := K(\cdot, x)$, is clearly a feature map for $K$ with feature space $\calh$, since the reproducing property implies that
\begin{align*}
\inner{H}{\Phi(x),\Phi(y)} = \inner{\calh}{K(\cdot, x),K(\cdot, y)} = K(x, y)\;\;\fa x, y\in\Omega.
\end{align*}

\end{remark}

\begin{remark}
For certain translational invariant kernels it is possible to prove that the associated native space is norm equivalent to a Sobolev spaces of the appropriate 
smoothness, which is related to the kernels' smoothness (see \cite[Chapter 10]{Wendland2005}). This is particularly interesting since the approximation 
properties of the different algorithms, including certain optimality that we will see in the next sections, are in fact optimal in these Sobolev spaces (with an 
equivalent norm). 
\end{remark}

The various operations on positive definite kernels mentioned in Section \ref{sec:construct_kernels} have an analogous effect on the corresponding native 
spaces. 
For example, the scaling by a positive number $a>0$ does not change the native space, but scales the inner product correspondingly, and, if 
$K' \preccurlyeq K''$ are positive definite kernels, then ${\mathcal H}_{K'}(\Omega)  \subset  {\mathcal H}_{K''}(\Omega)$. We remark that the latter property has been 
used for 
example in \cite{ZhZh2013} to prove inclusion relations for the native spaces of RBF kernels with different shape parameters.

\subsection{Kernels for vector-valued functions}\label{sec:mat_val}
So far we only dealt with scalar-valued kernels, which are suitable to treat scalar-valued functions. Nevertheless, it is 
clear 
that the interest in model reduction is typically also on vector-valued or multi-output functions, which thus require a generalization of the theory presented so far.
This has been done in \cite{Micchelli2005}, and it is based on the following definition of matrix-valued kernels.
\begin{definition}[Matrix-valued \gls*{pd} kernels]
Let $\Omega$ be a nonempty set and $q\in\N$. A function $K:\Omega\times\Omega\to \R^{q\times q}$ is a matrix valued kernel if it is symmetric, i.e., $K(x, y) = K(y, 
x)^T$ 
for all $x, y\in\Omega$. It is a \gls*{pd} (resp., \gls*{spd}) matrix-valued kernel if the kernel matrix $A\in\R^{nq\times nq}$ is positive semidefinite (resp., positive 
definite) for all $n\in\N$ and for all sets $X_n\subset\Omega$ of pairwise distinct elements.
\end{definition}
This more general class of kernels is also associated to a uniquely defined native space of vector-valued functions, where the notion of \gls*{rkhs} is replaced by 
the following.
\begin{definition}[\gls*{rkhs} for matrix-valued kernels]
Let $\Omega$ be a nonempty set, $q\in\N$, $\calh$ an Hilbert space of functions $f:\Omega\to\R^q$ with inner product $\inner{\calh}{}$. Then $\calh$ is called a 
vector-valued \gls*{rkhs} on $\Omega$ if there exists a function $K:\Omega\times\Omega\to\R^{q\times q}$ (the matrix-valued reproducing kernel) such that 
\begin{enumerate}
 \item $K(\cdot, x)v\in \calh$ for all $x\in \Omega$, $v\in\R^q$,
 \item $\inner{\calh}{f, K(\cdot, x)v} = f(x)^Tv$ for all $x\in \Omega$, $v\in\R^q$, $f\in \calh$ (directional reproducing property).
\end{enumerate}
\end{definition}

A particularly simple version of this construction can be realized by considering separable matrix-valued kernels (see e.g. \cite{Alvarez2012}), i.e., kernels that are 
defined as 
$
K(x, y) := \tilde K(x, y) B
$, where $\tilde K$ is a standard scalar-valued \gls*{pd} kernel, and $B\in\R^{q\times q}$ is a positive semidefinite matrix. In the special case $Q=I$ (the $q\times q$ 
identity matrix), in \cite{Wittwar2018} it is shown that the native space of $K$ is the tensor product of $q$ copies of the native space of $\tilde K$, i.e.,
\begin{align*}
\calh_K(\Omega)= \left\{f:\Omega\to\R^q : f_j\in \calh_{\tilde K}(\Omega), 1\leq j\leq q\right\}
\end{align*}
with
\begin{align*}
\inner{\calh_K}{f, g} = \sum_{j=1}^q \inner{\calh_{\tilde K}}{f_j, g_j}. 
\end{align*}
This simplification will give convenient advantages when implementing some of the methods discussed in Section \ref{sec:interp}.

\section{Data based surrogates}\label{sec:surrogates_general}
We can now introduce in general terms the two surrogate modeling techniques that we will discuss, namely (regularized) kernel interpolation and \gls*{svr}.

For both of them, the idea is to represent the expensive map to be reduced as a function $f:\Omega\to \R^q$ that maps an input $x\in\Omega$ to an output $y\in \R^q$. 
Here $f$ is assumed to be only continuous, and the set $\Omega$ can be arbitrary as long as a positive definite kernel $K$ can be defined on it. Moreover, 
the function does not need to be known in any particular way except than through its evaluations on a finite set $X_n:=\{x_k\}_{k=1}^n\subset\Omega$ of pairwise distinct 
data points, resulting in data values $Y_n:=\{y_k:=f(x_k)\}_{k=1}^n\subset\R^q$.

The goal is to construct a function $s\in\calh$ such that $s(x)$ is a good approximation of $f(x)$ for all $x\in\Omega$ (and not only for $x\in X_n$), 
while being significantly faster to evaluate. The process of computing $s$ from the data $(X_n, Y_n)$ is often referred to as training of the surrogate $s$, and the 
set $(X_n, Y_n)$ is thus called training dataset. 

The computation of the particular surrogate is realized as the solution of an infinite dimensional optimization problem. In general terms, we define a loss function 
\begin{align*}
L: \calh\times \Omega^n\times\left(\R^q\right)^ n\to \R_{\geq 0} \cup\{+\infty\},
\end{align*}
which takes as input a candidate surrogate $g\in\calh$ and the values $X_n\in\Omega^n$, $Y_n\in\left(\R^q\right)^ n$, and returns a measure of the data-accuracy of 
$g$. 
Then, the surrogate $s$ is defined as a minimizer, if it exists, of the cost function 
\begin{align*}
J(g):= L(g, X_n, Y_n) + \lambda \|g\|_{\calh}^2,
\end{align*}
where the second part of $J$ is a regularization term that penalizes solutions with large norm. The tradeoff between the data-accuracy term and the regularization term 
is controlled by the regularization parameter $\lambda\geq0$. 

For the sake of presentation, we restrict in the remaining of this section to the case of scalar-valued functions, i.e., $q=1$. The general case follows by using 
matrix 
valued kernels as introduced in Section \ref{sec:mat_val}, and the corresponding definition of orthogonal projections.

The following fundamental Representer Theorem characterizes exactly some solutions of this problem, and it proves that the surrogate will be a function 
\begin{align*}
s\in V(X_n):=\Sp{K(\cdot, x_i), x_i\in X_n}
\end{align*}
i.e., a finite linear combination of kernel translates on the training points. A first version of this result was proven in \cite{Wahba1970}, while we refer to 
\cite{Schoelkopf2001v} for a more general statement.

\begin{theorem}[Representer Theorem]\label{th:repre_theorem}
Let $\Omega$ be a nonempty set, $K$ a PD kernel on $\Omega$, $\lambda>0$ a regularization parameter, and let $(X_n, Y_n)$ be a training set.
Assume that $L(s, X_n, Y_n)$ depends on $s$ only via the values $s(x_i)$, $x_i\in X_n$.

Then, if the optimization problem
\begin{equation}\label{eq:minimization_problem}
\argmin_{g\in\calh }J(g) := L(g, X_n, Y_n) + \lambda \|g\|_{\calh}^2
\end{equation}
has a solution, it has in particular a solution of the form
\begin{align}\label{eq:regularized_s}
s(x) := \sum_{j=1}^n \alpha_j K(x, x_j),\;\;x \in\Omega,
\end{align}
for suitable coefficients $\alpha\in\R^n$.
\end{theorem}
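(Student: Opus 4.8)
The plan is to decompose the Hilbert space $\calh$ orthogonally with respect to the finite-dimensional subspace $V(X_n) = \Sp{K(\cdot, x_i), x_i \in X_n}$, and to show that the component of any candidate $g$ lying in the orthogonal complement only increases the regularization term while leaving the loss untouched. Concretely, since $V(X_n)$ is finite-dimensional it is closed, so we may write any $g \in \calh$ uniquely as $g = g_{\parallel} + g_{\perp}$ with $g_{\parallel} \in V(X_n)$ and $g_{\perp} \in V(X_n)^{\perp}$.

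First I would examine the loss term. For each training point $x_i \in X_n$, the reproducing property (Definition~\ref{def:rkhs}) gives
\begin{align*}
g(x_i) = \inner{\calh}{g, K(\cdot, x_i)} = \inner{\calh}{g_{\parallel}, K(\cdot, x_i)} + \inner{\calh}{g_{\perp}, K(\cdot, x_i)} = g_{\parallel}(x_i),
\end{align*}
because $K(\cdot, x_i) \in V(X_n)$ and hence $\inner{\calh}{g_{\perp}, K(\cdot, x_i)} = 0$. So $g$ and $g_{\parallel}$ agree on all of $X_n$, and by the hypothesis that $L$ depends on its first argument only through the values at $X_n$, we get $L(g, X_n, Y_n) = L(g_{\parallel}, X_n, Y_n)$.

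Next I would handle the regularization term. By Pythagoras, $\|g\|_{\calh}^2 = \|g_{\parallel}\|_{\calh}^2 + \|g_{\perp}\|_{\calh}^2 \geq \|g_{\parallel}\|_{\calh}^2$. Combining with the previous step, $J(g_{\parallel}) = L(g_{\parallel}, X_n, Y_n) + \lambda\|g_{\parallel}\|_{\calh}^2 \leq L(g, X_n, Y_n) + \lambda\|g\|_{\calh}^2 = J(g)$. Now suppose the minimization problem~\eqref{eq:minimization_problem} has a solution $g^{*}$; then $J(g^{*}_{\parallel}) \leq J(g^{*})$, so $g^{*}_{\parallel}$ is also a minimizer, and it lies in $V(X_n)$, hence has the form~\eqref{eq:regularized_s} for some $\alpha \in \R^n$. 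That $g^{*}_{\parallel}$ is a genuine element of $\calh$ (not merely a formal combination) follows from Property~\eqref{prop:f_in_h} of Proposition~\ref{prop:two_rkhs}.

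I do not expect a serious obstacle here; the only points requiring a little care are that $V(X_n)$ is closed (automatic by finite dimensionality, so the orthogonal decomposition and Pythagoras are valid) and that the strict positivity $\lambda > 0$ is what makes the inequality $\|g\|_{\calh}^2 \geq \|g_{\parallel}\|_{\calh}^2$ useful — with $\lambda > 0$ one could even argue that \emph{every} minimizer lies in $V(X_n)$ when $L$ is, say, strictly convex, but the statement only claims existence of one such minimizer, so the argument above suffices. A minor subtlety worth a remark is that the spanning set $\{K(\cdot, x_i)\}$ of $V(X_n)$ need not be linearly independent if $K$ is merely PD rather than SPD, so the coefficient vector $\alpha$ in~\eqref{eq:regularized_s} need not be unique; this does not affect the conclusion.
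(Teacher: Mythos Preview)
Your proposal is correct and follows essentially the same approach as the paper: orthogonally decompose $g$ with respect to $V(X_n)$, use the reproducing property to show the orthogonal part vanishes on $X_n$ so the loss is unchanged, and use Pythagoras to show the regularization term can only decrease. The paper's proof is slightly terser and omits your side remarks on closedness and non-uniqueness of $\alpha$, but the argument is the same.
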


\begin{proof}
We prove that for every $g\in\calh$ there exists $s\in V(X_n)$ such that $J(s)\leq J(g)$.
To see this, we decompose $g\in\calh$ as
\begin{align*}
g = s + s^{\perp},\;s\in V(X_n),\ s^{\perp}\in V(X_n) ^{\perp}.
\end{align*}
In particular, since $K(\cdot, x_i)\in V(X_n)$, we have by the reproducing property of the kernel that
\begin{align*}
s^{\perp}(x_i) = \inner{\calh}{s, K(\cdot, x_i)} = 0,\;\;1\leq i\leq n,
\end{align*}
thus $g(x_i) = s(x_i) + s^{\perp}(x_i) = s(x_i)$ for $1\leq i\leq n$, and it follows that $L(g, X_n, Y_n) = L(s, X_n, Y_n)$.
Moreover, again by orthogonal projection we have $\|g\|_{\calh}^2 = \|s\|_{\calh}^2 + \|s^{\perp}\|_{\calh}^2$.
Since $\lambda\geq 0$, we then obtain
\begin{align*}
J(s) &= L(s, X_n, Y_n) + \lambda \|s\|_{\calh}^2 = L(g, X_n, Y_n) + \lambda \|s\|_{\calh}^2 \\
& = L(g, X_n, Y_n) + \lambda \|g\|_{\calh}^2 - \lambda \|s^{\perp}\|_{\calh}^2= J(g) - \lambda \|s^{\perp}\|_{\calh}^2 \leq J(g).
\end{align*}
Thus, if $g\in\calh$ is a solution then $s\in V(X_n)$ is also a solution.
\end{proof}

The existence of a solution will be guaranteed by choosing a convex cost function $J$, i.e., since the regularization term is always convex, by choosing 
a convex loss function. Then the theorem states that solutions of the infinite dimensional optimization problem can be computed by solving a finite dimensional 
convex one.

This is a great result, but observe that the evaluation of $s(x)$, $x\in\Omega$, requires the evaluation of the $n$-terms linear combination \eqref{eq:regularized_s}, 
where $n$ is the size of the dataset. Assuming that the kernel can be evaluated in constant time, the 
complexity of this operation is $\mathcal O(n)$.  
Thus, to achieve the promised speedup in evaluating the surrogate in place of the function $f$, we will consider in the following methods that enforce sparsity 
in $s$, i.e., which compute approximate solution where most of the coefficients $\alpha_j$ are zero. If the nonzero coefficients correspond to an index set 
$I_N:=\{i_1,\dots, i_N\}\subset 
\{1,\dots,n\}$, the complexity is reduced to $\mathcal O(N)$. 

Taking into account this sparsity and denoting $X_N:=\{x_i\in X_n: i\in I_N\}$ and $\alpha:=\left(\alpha_i: i\in I_N\right)$, we can summarize in Algorithm 
\ref{alg:online} the online phase for any of the following 
algorithms, consisting in the evaluation of $s$ on a set of points $X_{te}\subset\Omega$. Here and in the following, we denote as $s(X):=(s(x_1), \dots, s(x_m))^T\in 
\R^{m}$ the vector of evaluations of $s$ on a set of points $X:=\{x_i\}_{i=1}^m\subset\Omega$.

\begin{algorithm}[h!t]
  \caption{Kernel surrogate - Online phase}
  \begin{algorithmic}[1]
      \State{Input: $X_{N}\in\Omega^{N},  \alpha\in\R^{N}$, kernel $K$ (and kernel parameters), test points 
$X_{te}:=\{x^{te}_i\}_{i=1}^{n_{te}}\in\Omega^{n_{te}}$}
        \State{Compute the kernel matrix $A_{te}\in \R^{n_{te}\times N}$, $(A_{te})_{ij}:= K(x^{te}_i, x_{i_j})$.}
	\State{Evaluate the surrogate $s(X^{te}) = A_{te} \alpha$.}
     \State{Output: evaluation of the surrogate $s(X^{te})\in\R^{n_{te}}$.}
    \end{algorithmic}\label{alg:online}
\end{algorithm}

\begin{remark}[Normalization of the cost function]
It is sometimes convenient to weight the loss term in the cost function \eqref{eq:minimization_problem} by a factor $1/n$, which normalizes its value with respect to the 
number of data. We do not use this convention here, and we only remark that this is equivalent to the use of a regularization parameter $\lambda = n \lambda'$ for 
a given $\lambda'>0$. 
\end{remark}

\section{Kernel interpolation}\label{sec:interp}
The first method that we discuss is (regularized) kernel interpolation. In this case, we consider the square loss function 
\begin{align*}
L(s, X_n, Y_n):= \sum_{i=1}^n \left(s(x_i) - y_i\right)^2, 
\end{align*}
which measures the pointwise distance between the surrogate and the target data. 
We have then the following special case of the Representer Theorem. We denote as $y\in\R^n$ the vector of output data, assuming again for now that $q=1$.
\begin{cor}[Regularized interpolant]\label{th:repre_theorem_reg_interp}
Let $\Omega$ be a nonempty set, $K$ a PD kernel on $\Omega$, $\lambda\geq0$ a regularization parameter. 
For any training set $(X_n, Y_n)$ there exists an approximant of the form
\begin{align}\label{eq:regularized_interpolant}
s(x) = \sum_{j=1}^n \alpha_j K(x, x_j),\;\;x \in\Omega,
\end{align}
where the vector of coefficients $\alpha\in\R^n$ is a solution of the linear system
\begin{equation}\label{eq:regularized_interpolant_A}
(A + \lambda I) \alpha = y,
\end{equation}
where $A\in\R^{n\times n}$, $A_{ij}:=K(x_i, x_j)$, is the kernel matrix on $X_n$.
Moreover, if $K$ is SPD this is the unique solution of the minimization problem \eqref{eq:minimization_problem}.
\end{cor}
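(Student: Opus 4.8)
The plan is to use the Representer Theorem to collapse the infinite-dimensional minimization \eqref{eq:minimization_problem} onto the $n$-dimensional subspace $V(X_n)$, and then to solve the resulting quadratic minimization in $\R^n$ by an elementary stationarity computation. As a preliminary step I would record that the square loss $L(s,X_n,Y_n)=\sum_{i=1}^n(s(x_i)-y_i)^2$ depends on $s$ only through the values $s(x_i)$, $x_i\in X_n$, and is convex in $s$ (a finite sum of squares of affine functionals); hence $J$ is convex, and when $\lambda>0$ it is coercive because $J(g)\ge\lambda\|g\|_\calh^2$, so a minimizer exists and Theorem \ref{th:repre_theorem} provides one of the form \eqref{eq:regularized_interpolant}. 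When $\lambda=0$ and $K$ is SPD one instead notes directly that the data can be interpolated exactly, so a minimizer of the form \eqref{eq:regularized_interpolant} again exists. It then suffices to describe the coefficient vectors $\alpha$ that realize such a minimizer.

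Plugging the ansatz $s=\sum_{j=1}^n\alpha_jK(\cdot,x_j)$ into $J$ and using Proposition \ref{prop:two_rkhs}, which gives $\|s\|_\calh^2=\alpha^T A\alpha$ and $s(x_i)=\sum_{j=1}^n\alpha_jK(x_i,x_j)=(A\alpha)_i$, one obtains the finite-dimensional cost
\begin{equation*}
\widetilde J(\alpha):=\|A\alpha-y\|_2^2+\lambda\,\alpha^T A\alpha .
\end{equation*}
Since $A$ is symmetric and positive semidefinite, $\widetilde J$ is a convex quadratic on $\R^n$, so its global minimizers are precisely its stationary points. A short gradient computation (using $A^T=A$) yields $\tfrac12\nabla\widetilde J(\alpha)=A(A\alpha-y)+\lambda A\alpha=A\big[(A+\lambda I)\alpha-y\big]$, so every solution of the linear system $(A+\lambda I)\alpha=y$ is a stationary point of $\widetilde J$, hence minimizes $J$ over $V(X_n)$, and therefore — by the Representer Theorem — over all of $\calh$.

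Finally I would settle solvability and uniqueness. If $\lambda>0$, then $A+\lambda I$ is positive definite and thus invertible, so $\alpha=(A+\lambda I)^{-1}y$ is the unique solution of \eqref{eq:regularized_interpolant_A}; moreover $\|\cdot\|_\calh^2$ is strictly convex, so $J$ is strictly convex on $\calh$ and its minimizer is unique, which combined with the Representer Theorem shows it must be the function \eqref{eq:regularized_interpolant} with that $\alpha$. If instead $\lambda=0$ and $K$ is SPD, then $A$ itself is invertible, $\alpha=A^{-1}y$ is forced, and since the translates $K(\cdot,x_i)$ are then linearly independent (as $A$ is nonsingular) the map $\alpha\mapsto s$ is injective, so the interpolant of the form \eqref{eq:regularized_interpolant} is unique.

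The step I expect to require the most care is the gradient/stationarity computation and its interpretation: the stationarity condition is genuinely $A\big[(A+\lambda I)\alpha-y\big]=0$, which is in general weaker than $(A+\lambda I)\alpha=y$, so one must argue that it is still legitimate (and, in the regimes claimed by the corollary, in fact necessary) to take $\alpha$ to solve the square, nonsingular system $(A+\lambda I)\alpha=y$. A lesser subtlety is that for $\lambda=0$ the uniqueness assertion should be read as uniqueness within the family \eqref{eq:regularized_interpolant}, since the plain loss $L$ alone is not strictly convex on $\calh$.
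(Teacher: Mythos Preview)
Your proposal is correct and follows essentially the same route as the paper: invoke the Representer Theorem to reduce to $V(X_n)$, rewrite $J$ as the quadratic $\widetilde J(\alpha)=\|A\alpha-y\|_2^2+\lambda\,\alpha^TA\alpha$, set its gradient $2A\big[(A+\lambda I)\alpha-y\big]$ to zero, and then observe that $(A+\lambda I)\alpha=y$ always provides a solution and is forced when $K$ is SPD. Your treatment is in fact a bit more careful than the paper's on two points the paper glosses over, namely the existence argument (you supply coercivity for $\lambda>0$ rather than just convexity) and the distinction between $A\big[(A+\lambda I)\alpha-y\big]=0$ and $(A+\lambda I)\alpha=y$.
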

\begin{proof}
The loss $L$ is clearly convex, so there exists a solution of the optimization problem, and by Theorem \ref{th:repre_theorem} we know that we can restrict to 
solutions 
in $V(X_n)$.

We then consider functions $s:= \sum_{j=1}^n \alpha_j K(\cdot, x_j)$ for some unknown $\alpha\in\R^n$. Computing the inner product as in Proposition 
\ref{prop:two_rkhs}, we obtain 
\begin{align*}
s(x_i) =  \sum_{j=1}^n \alpha_j K(x_i, x_j) = (A \alpha)_i, \;\; \|s\|_{\calh}^2 = \sum_{i,j=1}^n \alpha_i\alpha_j K(x_i, x_j) = \alpha^T A \alpha.
\end{align*}
The functional $J$ restricted to $V(X_n)$ can be parametrized by $\alpha\in\R^n$, and thus it can be rewritten as $\tilde J:\R^n\to\R$ with  
\begin{align*}
\tilde J(\alpha) &= \|A \alpha - y\|_2^2  + \lambda \alpha^T A \alpha = (A \alpha - y)^T (A \alpha - y) + \lambda\alpha^T A \alpha\\
&= \alpha^T A^T A \alpha - 2 \alpha^T A^T y + y^T y + \lambda\alpha^T A \alpha,
\end{align*}
which is convex in $\alpha$ since $A$ is positive semidefinite. Since $A$ is symmetric, its gradient is
\begin{align*}
\nabla_{\alpha} \tilde J(\alpha) &= 2 A^T A \alpha - 2 A^T y + 2 \lambda A \alpha = 2 A (A \alpha - y + \lambda \alpha),
\end{align*}
i.e., $\nabla_{\alpha} \tilde J(\alpha) = 0$ if and only if $A\left(A   + \lambda I \right)\alpha = A y$, which is satisfied by $\alpha$ such that $\left(A   + 
\lambda I 
\right)\alpha = y$. If $K$ is SPD then both $A$ and $A+\lambda I$ are invertible, so this is the only solution.
\end{proof}

The extension to vector valued functions, i.e. $q>1$, is straightforward using the separable matrix valued kernels with $B=I$ of Section \ref{sec:mat_val}. Indeed, in 
this case the data values are vectors $y_i:=f(x_i)\in\R^q$, and thus in the interpolant 
\eqref{eq:regularized_interpolant} also the coefficients are vectors $\alpha_j\in\R^q$. The linear system \eqref{eq:regularized_interpolant_A} has the same 
matrix, but 
instead $\alpha, y\in \R^{n\times q}$ are defined as
\begin{align}\label{eq:mat_val_coeffs}
\alpha := \left(\alpha_1, \dots, \alpha_n\right)^T, \;\;y := \left(y_1, \dots, y_n\right)^T.
\end{align}
We remark that in the following the values $x_i$, $y_i$, $s(x)$, and $\alpha_k$ have always to be understood as row vectors when $q>1$. 
This notation is very convenient when representing the coefficients as the 
solution of a linear system. Furthermore, the representation of the dataset samples $(x, y)$ is quite natural when dealing with tabular data, where each column 
represents a feature and each row a sample vector.

For $K$ \gls*{spd} and pairwise distinct sample locations $X_n$ we can also set $\lambda:=0$ and obtain pure interpolation, i.e., the solution satisfies $L(s, X_n, 
Y_n) = 0$, or
\begin{align*}
s(x_i) = y_i, \;\; 1\leq i\leq n.
\end{align*}
Observe that this means that with this method we can exactly interpolate arbitrary continuous functions on arbitrary pairwise distinct scattered data in any 
dimension, as opposite to many other techniques which require complicated conditions on the interpolation points or a grid structure.
Moreover, this approximation process has several optimality properties in $\calh$, which remind of similar properties of spline interpolation. 
\begin{proposition}[Optimality of kernel interpolation]
Let $K$ be \gls*{spd}, $f\in\calh$, and $\lambda=0$. Then $s$ is the orthogonal projection of $f$ in $V(X_n)$, and in particular
\begin{align*}
\|f - s\|_{\calh} = \min_{g\in V(X_n)} \|f - g\|_{\calh}.
\end{align*}
Moreover, if $S : = \left\{g\in\calh : g(x_i) = f(x_i), 1\leq i\leq n\right\}$, then
\begin{align*}
\left\|s\right\|_{\calh} = \min_{g\in S} \|g\|_{\calh},
\end{align*}
i.e., $s$ is the minimal norm interpolant of $f$ on $X_n$.
\end{proposition}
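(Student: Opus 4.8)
The plan is to exploit the reproducing kernel structure and the orthogonal decomposition $\calh = V(X_n) \oplus V(X_n)^{\perp}$, just as in the proof of the Representer Theorem. First I would establish the orthogonal projection claim: since $K$ is SPD and the points $X_n$ are pairwise distinct, the functions $\{K(\cdot, x_i)\}_{i=1}^n$ are linearly independent, so $\dim V(X_n) = n$, and by Corollary \ref{th:repre_theorem_reg_interp} with $\lambda = 0$ there is a unique $s \in V(X_n)$ with $s(x_i) = f(x_i)$ for $1 \le i \le n$ (the kernel matrix $A$ is invertible). Writing $f = P_{V(X_n)} f + r$ with $r \in V(X_n)^{\perp}$, the reproducing property gives $r(x_i) = \inner{\calh}{r, K(\cdot, x_i)} = 0$, hence $(P_{V(X_n)} f)(x_i) = f(x_i)$ for all $i$; so $P_{V(X_n)} f$ is an element of $V(X_n)$ that interpolates $f$ on $X_n$, and by uniqueness it must equal $s$. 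The identity $\|f - s\|_{\calh} = \min_{g \in V(X_n)} \|f - g\|_{\calh}$ is then just the standard characterization of the orthogonal projection onto a closed subspace.

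For the minimal-norm property, I would take any $g \in S$, i.e. any $g \in \calh$ with $g(x_i) = f(x_i) = s(x_i)$ for $1 \le i \le n$, and show $\|s\|_{\calh} \le \|g\|_{\calh}$. The key observation is that $g - s$ vanishes on $X_n$, so for each $i$ the reproducing property yields $\inner{\calh}{g - s, K(\cdot, x_i)} = (g-s)(x_i) = 0$; since $s \in V(X_n) = \Sp{K(\cdot, x_i)}$, linearity of the inner product gives $\inner{\calh}{g - s, s} = 0$. Therefore $\|g\|_{\calh}^2 = \|s + (g - s)\|_{\calh}^2 = \|s\|_{\calh}^2 + \|g - s\|_{\calh}^2 \ge \|s\|_{\calh}^2$, with equality iff $g = s$. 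This shows $s$ is the unique minimizer of $\|g\|_{\calh}$ over $S$.

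I would also want to remark that $S$ is nonempty (it contains $f$ itself, since $f \in \calh$) so the minimum is meaningfully attained, and that $s \in S$ so the minimum value is indeed $\|s\|_{\calh}$. None of the steps present a real obstacle: the only point requiring a little care is the justification that $\dim V(X_n) = n$ and hence the interpolation problem is uniquely solvable, which is exactly where strict positive definiteness of $K$ is used; everything else is a routine Pythagorean argument in the Hilbert space $\calh$ built on the reproducing property. Both claims are, in essence, two applications of the single principle that the interpolation error $f - s$ (resp. the difference $g - s$ of two interpolants) is $\calh$-orthogonal to $V(X_n)$.
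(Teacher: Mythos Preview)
Your proposal is correct and follows essentially the same approach the paper indicates: decompose $f$ (or an arbitrary interpolant $g$) into its $V(X_n)$-component and its orthogonal complement, use the reproducing property to see that the complement vanishes on $X_n$, and conclude via uniqueness of the interpolant and Pythagoras. The paper only gives a one-line hint referring back to the Representer Theorem argument, and you have filled in exactly the details that hint asks for.
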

\begin{proof}
The proof is analogous to the proof of the Representer Theorem, using a decomposition $f = g + g^{\perp}$, and proving that $s=g$.
\end{proof}

We will see in Section \ref{sec:parameter_selection} a general technique to tune $\lambda$ using the data, which should return $\lambda = 0$ (or very small) 
when this is the best option. Nevertheless, also for an \gls*{spd} kernel there are at least two 
reasons to still consider regularized interpolation. First, the data can be affected by noise, and thus an exact pointwise recovery does not make much sense. 
Second, 
a positive parameter $\lambda> 0$ improves the condition number of the linear system, and thus the stability of the solution. Indeed, the $2$-condition number of $A + 
\lambda I$ is
$$
\kappa(\lambda):=\frac{\lambda_{\max}(A + \lambda I)}{\lambda_{\min}(A + \lambda I)} = \frac{\lambda_{\max}(A) + \lambda}{\lambda_{\min}(A) + \lambda }, 
$$
which is a strictly decreasing function of $\lambda$, with $\kappa(0) = \kappa(A)$ and $\lim\limits_{\lambda\to\infty} \kappa(\lambda) = 1$. Moreover (see 
\cite{WendlandRieger2005}) this increased stability can be achieved by still controlling the pointwise accuracy. Namely, if $f\in\calh$, it holds 
\begin{align*}
\left\|y_i - s(x_i)\right\|_2\leq \sqrt{\lambda}\|f\|_{\calh}\;\; 1\leq i\leq n. 
\end{align*}

We can then summarize the offline phase for regularized kernel interpolation in Algorithm \ref{alg:interpolation_offline}.

\begin{algorithm}[h!t]
  \caption{Regularized Kernel interpolation - Offline phase}
  \begin{algorithmic}[1]
      \State{Input: training set $X_{n}\in\Omega^{n}$, $Y_{n}\in\left(\R^q\right)^n$, kernel $K$ (and kernel parameters), regularization parameter $\lambda\geq 
0$.}
        \State{Compute the kernel matrix $A\in \R^{n\times n}$, $A_{ij}:= K(x_i, x_j)$.}
        \State{Solve the linear system $(A + \lambda I) \alpha = y$.}
     \State{Output: coefficients $\alpha\in\R^{n\times q}$.}
    \end{algorithmic}\label{alg:interpolation_offline}
\end{algorithm}

\begin{remark}[Flat limit]
The matrix $A$ can be seriously ill-conditioned for certain kernels, and this constitutes a problem at least in the case of pure interpolation. It can also be 
proven 
that kernels which guarantee a faster error convergence result in a worse conditioned matrix \cite{Schaback1995}.

For \gls*{rbf} kernels, this happens especially for $\gamma\to 0$ (the so called flat limit), and it is usually not a good idea to directly solve the linear 
system. In the last years there has been very active research to compute $s$ via different formulations, which rely on different representations of the 
kernel. We mention here mainly the RBF-QR algorithm\footnote{\url{http://www.it.uu.se/research/scientific_computing/software/rbf_qr}} \cite{Larsson2013a,Fornberg2011}
and the Hilbert-Schmidt SVD\footnote{\url{http://math.iit.edu/~mccomic/gaussqr/}} \cite{Fasshauer2012b} . 
 Both methods are limited so far to only some kernels, but they manage to achieve a great accuracy, which is usually impossible to obtain with the direct 
solution of the 
linear system.   
\end{remark}

\begin{remark}[Error estimation]
For \gls*{spd} translational invariant kernels there is a very detailed error analysis of the interpolation process ($\lambda = 0$), for which we refer to  
\cite[Chapter 11]{Wendland2005}. We only mention that these error bounds assume that $f\in\calh$, and are of the form
\begin{align*}
\left\|f - s\right\|_{L_{\infty}(\Omega)} \leq C h_{n}^p \|f\|_{\calh},
\end{align*}
where $C>0$ is a constant independent of $f$, and $h_{n}$ is the fill distance of $X_n$ in $\Omega$, i.e., 
$$
h_n:=h_{X_n,\Omega} := \sup\limits_{x\in\Omega}\min\limits_{x_j\in X_n}\|x-x_j\|,
$$
which is the analogue of the mesh width for scattered data. Moreover, the order of convergence $p>0$ is dependent on the smoothness of the kernel.
In particular, these error bounds can be proven to be optimal when the native space of $K$ is a Sobolev space.

Moreover, these results have been recently extended to the case of regularized interpolation ($\lambda>0$) in \cite{WendlandRieger2005,Rieger2008b}.
\end{remark}

\subsection{Kernel greedy approximation}\label{sec:greedy}
The surrogate constructed via Corollary \ref{th:repre_theorem_reg_interp} involves a linear combination of $n$ 
terms, where $n$ is the size of the dataset. In general, there is no reason to assume that the result has any sparsity, i.e., in general all the $\alpha_j$ will be 
nonzero, and it is thus necessary to introduce some technique to enforce this sparsity.

A very effective way to achieve this result is via greedy algorithms. The idea is to select a small subset $X_N\subset X_n$, $N\ll n$, given by indexes $I_N\subset\{1, 
\dots, n\}$, and to solve the corresponding restricted problem with the dataset $(X_N, Y_N)$ to compute a surrogate
\begin{align}\label{eq:greedy_surrogate}
s_N(x):= \sum_{k \in I_N} \alpha_k K(x, x_k),
\end{align}
where the coefficient vectors are computed based on \eqref{eq:regularized_interpolant_A}, and are in general different from the ones of the full surrogate. If we 
manage to select $I_N$ in a proper way, we will obtain $s_N(x)\approx f(x)$ for all $x\in\Omega$, while the evaluation of $s_N(x)$ is now only of order $\mathcal O(N)$.

An optimal selection of $X_N$ is a combinatorial problem and thus is very expensive and in practice computationally intractable. The idea of greedy algorithms is instead 
to perform this selection incrementally, i.e., adding at each iteration only the most promising new point, based on some error indicator. 

The general structure of the algorithm is described in Algorithm \ref{alg:greedy_general}. 
For the moment, we consider a generic selection rule $\eta:X_n\times\N\times \Omega^n\times\left(\R^q\right)^{n}\to\R_{\geq 0}$ that selects points based on the value 
$\eta\left(x,N, 
X_n, Y_n\right)$. This is a compact notation to denote that the selection rule assigns a score to a point $x\in \Omega$, and it is computed 
using various quantities that depend on the dataset $(X_n, Y_n)$ and on the iteration number $N$, including in particular the surrogate computed at the previous 
iteration.
The algorithm is terminated by means of a given tolerance $\tau>0$. 

\begin{algorithm}[h!t]
  \caption{Kernel greedy approximation - Offline phase}
  \begin{algorithmic}[1]
      \State{Input: training set $X_{n}\in\Omega^{n}$, $Y_{n}\in\left(\R^{q}\right)^n$, kernel $K$ (and kernel parameters), regularization parameter $\lambda\geq 
0$, selection rule $\eta$, tolerance $\tau$.}
      \State{Set $N:=0$, $X_0:=\emptyset$, $V(X_0):=\{0\}$, $s_0:=0$.}
       \Repeat
	\State{Set $N:= N + 1$}
        \State{Select $x_N := \argmax\limits_{x\in X_n\setminus X_{N-1}}\ \eta(x, N, X_n, Y_n)$.}\label{line:alg_greedy_approx}
	\State{Define $X_N:= X_{N-1}\cup\{x_N\}$ and $V(X_N):=\Sp{K(\cdot, x_i), x_i\in X_N}$}
	\State{Compute the surrogate $s_N$ with dataset $(X_N, Y_N)$ with \eqref{eq:regularized_interpolant_A}.}\label{line}
    \Until{$\eta\left(x_N, N,  X_n, Y_n\right)\leq \tau $}
     \State{Output: surrogate $s_N$ (i.e. coefficients $\alpha\in\R^{N\times q}$).}
    \end{algorithmic}\label{alg:greedy_general}
\end{algorithm}

\begin{remark}
In the case that the maximizer of $\eta$ the line \ref{line:alg_greedy_approx} of Algorithm \ref{alg:greedy_general}  is not unique, only one of the multiple points is 
selected and included in $X_N$.
\end{remark}

In line \ref{line} of the algorithm, we need to compute the surrogate $s_N$ with dataset $(X_N, Y_N)$. This step can be highly simplified by reusing $s_{N-1}$ as much as 
possible, thus improving the efficiency of the algorithm. As a side effect, with this incremental procedure it is easy to update the surrogate if the accuracy has 
to be improved.

This can be achieved using the Newton basis, which is defined in analogy to the Newton basis for polynomial interpolation. It has been introduced in 
\cite{Muller2009,Pazouki2011} for $K$ SPD, and extended to the case of $K$ PD and $\lambda>0$ in \cite{SaWiHa2018}, and we refer to these papers for the proof of the 
following result.

\begin{proposition}[Newton basis]\label{def:newton}
Let $\Omega$ be non empty, $\lambda\geq 0$, $K$ be PD on $\Omega$ or SPD when $\lambda=0$. Let $X_n\subset\Omega$ be pairwise distinct, and let $I_N\subset \{1, 
\dots, n\}$. 
Let moreover $K_{\lambda}(x, y):=K(x, y)+\lambda \delta_{y}(x)$ for all $x, y\in\Omega$, and denote its RKHS as $\calh_{\lambda}$.

The Newton basis $\{v_j\}_{j=1}^N$ is defined as the Gram-Schmidt 
orthonormalization  of $\{K_{\lambda}(\cdot, x_i)\}_{i\in I_N}$ in $\calh$, i.e.,
\begin{align*}
v_1(x) &: = \frac{K_{\lambda}(x, x_{i_1})}{\norm{\calh_{\lambda}}{K_{\lambda}(\cdot, x_{i_1})}}=\frac{K_{\lambda}(x, x_{i_1})}{\sqrt{K_{\lambda}(x_{i_1}, x_{i_1})}}\\
\tilde v_k(x) &: =K_{\lambda}(x, x_{i_k}) - \sum_{j=1}^{k-1} 
v_j(x_{i_k}) v_j(x)\\
v_k(x) &: = \frac{\tilde v_k(x)}{\|\tilde v_k\|_{\calh_{\lambda}}}=\frac{\tilde v_k(x)}{\sqrt{\tilde v_k(x_k)}},\;\; 1<k\leq N. 
\end{align*}
Moreover, for all $1\leq k\leq N$, we have
\begin{align*}
v_k(x) = \sum_{j=1}^N \beta_{jk} K_{\lambda}(x, x_{i_j}),
\end{align*}
and, if $B\in\R^{N\times N}$, $B_{jk}:= \beta_{jk}$, and $V\in \R^{N\times N}$, $V_{jk} := v_k(x_j)$, then $B, V$ are triangular, $B = V^{-T}$, and
\begin{align*}
A_N + \lambda I = V V^{T}
\end{align*}
is the Cholesky decomposition of the regularized kernel matrix $A_N+\lambda I\in\R^{N\times N}$, $A_{jk}:= K(x_{i_j}, x_{i_k})$, with pivoting given by $I_N$.
\end{proposition}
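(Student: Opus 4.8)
The plan is to carry out the entire argument inside the RKHS $\calh_\lambda$ of the regularized kernel $K_\lambda$, using its reproducing property to replace each Gram--Schmidt inner product by a point evaluation. First I would justify that $\calh_\lambda$ exists and that the orthogonalization is well posed: $K_\lambda$ is symmetric, and on the pairwise distinct points of $X_N$ its kernel matrix is precisely $A_N+\lambda I$, which is positive semidefinite, and positive definite when $\lambda>0$ or when $\lambda=0$ and $K$ is SPD. Hence $K_\lambda$ is a PD kernel and Theorem~\ref{th:aronszajn} provides the unique native space $\calh_\lambda$; invertibility of $A_N+\lambda I$ moreover shows that the translates $\{K_\lambda(\cdot,x_{i_j})\}_{j=1}^N$ are linearly independent in $\calh_\lambda$, so Gram--Schmidt returns an orthonormal system $\{v_k\}_{k=1}^N$ with $\Sp{v_1,\dots,v_k}=\Sp{K_\lambda(\cdot,x_{i_1}),\dots,K_\lambda(\cdot,x_{i_k})}$ for every $k$. (The statement's ``orthonormalization in $\calh$'' is to be read as in $\calh_\lambda$, consistently with the norms appearing in the formulas.)

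Next I would verify the closed-form recursion. By the reproducing property in $\calh_\lambda$ (Definition~\ref{def:rkhs} applied to $K_\lambda$, together with Proposition~\ref{prop:two_rkhs}) one has $\inner{\calh_\lambda}{g,K_\lambda(\cdot,x)}=g(x)$ for all $g\in\calh_\lambda$, $x\in\Omega$; in particular $\norm{\calh_\lambda}{K_\lambda(\cdot,x_{i_1})}^2=K_\lambda(x_{i_1},x_{i_1})$, which is the denominator of $v_1$. For $k>1$, the usual Gram--Schmidt formula $\tilde v_k = K_\lambda(\cdot,x_{i_k})-\sum_{j<k}\inner{\calh_\lambda}{K_\lambda(\cdot,x_{i_k}),v_j}\,v_j$ combined with $\inner{\calh_\lambda}{K_\lambda(\cdot,x_{i_k}),v_j}=v_j(x_{i_k})$ gives exactly the stated $\tilde v_k$. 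For the normalization, since $\tilde v_k-K_\lambda(\cdot,x_{i_k})\in\Sp{v_1,\dots,v_{k-1}}$ while $\tilde v_k$ is orthogonal to that span, $\norm{\calh_\lambda}{\tilde v_k}^2=\inner{\calh_\lambda}{\tilde v_k,K_\lambda(\cdot,x_{i_k})}=\tilde v_k(x_{i_k})$, so $v_k=\tilde v_k/\sqrt{\tilde v_k(x_{i_k})}$ and, evaluating, $v_k(x_{i_k})=\sqrt{\tilde v_k(x_{i_k})}>0$ (here $x_k$ in the statement abbreviates $x_{i_k}$).

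Then I would read off the matrix identities. Since $v_k$ is assembled only from $K_\lambda(\cdot,x_{i_1}),\dots,K_\lambda(\cdot,x_{i_k})$, writing $v_k=\sum_{j=1}^N\beta_{jk}K_\lambda(\cdot,x_{i_j})$ forces $\beta_{jk}=0$ for $j>k$, so $B=(\beta_{jk})$ is triangular; evaluating this expansion at $x_{i_l}$ yields $V_{lk}=v_k(x_{i_l})=\sum_j(A_N+\lambda I)_{lj}\beta_{jk}$, i.e. $V=(A_N+\lambda I)B$. Because $v_k$ is orthogonal to $\Sp{v_1,\dots,v_{k-1}}\ni K_\lambda(\cdot,x_{i_j})$ for $j<k$, the reproducing property gives $V_{jk}=v_k(x_{i_j})=0$ for $j<k$, so $V$ is triangular with the positive diagonal found above. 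Finally, orthonormality reads $\delta_{kl}=\inner{\calh_\lambda}{v_k,v_l}=\sum_j\beta_{jl}v_k(x_{i_j})=(V^TB)_{kl}$, hence $V^TB=I$ and $B=V^{-T}$; substituting into $V=(A_N+\lambda I)B$ gives $A_N+\lambda I=VB^{-1}=VV^T$, a factorization into a triangular matrix with positive diagonal, hence the Cholesky decomposition, with the pivot order dictated by $I_N=(i_1,\dots,i_N)$ because that is the order in which the translates were orthogonalized.

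There is no deep obstacle here: the whole content is that the reproducing property collapses the abstract Gram--Schmidt step into the explicit recursion. The points needing care are organizational --- performing every inner product in $\calh_\lambda$ rather than in $\calh$, keeping the pivot indices $i_k$ distinct from the running indices $k$, and, the one place where linear independence of the translates (equivalently invertibility of $A_N+\lambda I$) is genuinely used, checking that $\tilde v_k(x_{i_k})=\norm{\calh_\lambda}{\tilde v_k}^2>0$, so that all the normalizations and the Cholesky diagonal are well defined and strictly positive.
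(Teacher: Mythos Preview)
Your argument is correct and is the natural one. Note that the paper itself does not prove this proposition: just before the statement it says that the Newton basis ``has been introduced in \cite{Muller2009,Pazouki2011} for $K$ SPD, and extended to the case of $K$ PD and $\lambda>0$ in \cite{SaWiHa2018}, and we refer to these papers for the proof of the following result.'' So there is no in-paper proof to compare against; your write-up supplies exactly the self-contained argument those references carry out, namely Gram--Schmidt in $\calh_\lambda$ with every inner product collapsed to a point evaluation via the reproducing property, and then reading off $V=(A_N+\lambda I)B$ and $V^TB=I$.

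One small point to tighten: to invoke Theorem~\ref{th:aronszajn} you need $K_\lambda$ to be PD on all of $\Omega$, not merely on $X_N$. This is immediate --- the Kronecker-delta kernel $(x,y)\mapsto\delta_y(x)$ has the identity as its Gram matrix on any finite set of pairwise distinct points, hence is SPD, so $K_\lambda=K+\lambda\,\delta$ is a sum of PD kernels and therefore PD --- but your current sentence reads as if the positive semidefiniteness of the single matrix $A_N+\lambda I$ already suffices. Apart from this, and the two notational clarifications you already flag (orthonormalization in $\calh_\lambda$ rather than $\calh$, and $x_k$ standing for $x_{i_k}$), the proof is complete.
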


Observe that this basis is nested, i.e., we can incrementally add a new element without recomputing the previous ones. Even more, with this basis the surrogate can be 
computed as follows.

\begin{proposition}[Incremental regularized interpolation]\label{prop:newton_interpolation}
Let $\Omega$ be non empty, $\lambda\geq 0$, $K$ be PD on $\Omega$ or SPD when $\lambda=0$. Let $(X_N, Y_N)$ be the subset of $(X_n, Y_n)$ corresponding to indexes $I_N$, 
for all $N\leq n$.

Let $\tilde s_0:=0$, and, for $N\geq 1$, compute the following incremental function
\begin{align}\label{eq:update_interp}
\tilde s_N(x) = \sum_{k=1}^N c_k v_k(x) = c_N v_N(x) + s_{N-1}(x),\;\; c_N:= \frac{y_{i_N} - \tilde s_N(x_{i_N})}{\sqrt{v_N(x_{i_N})}}.
\end{align}
Then, for all $N$, the regularized interpolant can be computed as  
\begin{align*}
s_N(x) = \sum_{j=1}^N \alpha_j K(x, x_{i_j})\;\;\text{ where }\;\; \alpha := V^{-T} c. 
\end{align*}
\end{proposition}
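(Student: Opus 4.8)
The plan is to reduce the claim to a single linear‑algebra identity, using everything supplied by Proposition~\ref{def:newton}. Recall from there that $v_k = \sum_{j=1}^N \beta_{jk}\, K_{\lambda}(\cdot, x_{i_j})$, that $B=(\beta_{jk})$ satisfies $B = V^{-T}$ with $V_{jk} := v_k(x_{i_j})$, that $V$ is lower triangular, and that $A_N + \lambda I = V V^T$ is the pivoted Cholesky factorization of the regularized kernel matrix. I would also note that under the standing hypotheses the kernel $K_{\lambda}$ is SPD: its Gram matrix on pairwise distinct points is $A + \lambda I$, which is positive definite when $\lambda > 0$, and is $A$ — positive definite by assumption — when $\lambda = 0$; hence the Gram--Schmidt process is well defined and $V$ is nonsingular, which is what makes the quantities above meaningful.

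First I would pin down the function we are aiming for. Applying Corollary~\ref{th:repre_theorem_reg_interp} to the sub‑dataset $(X_N, Y_N)$ with the kernel $K$, the regularized interpolant is $s_N = \sum_{j=1}^N \alpha_j K(\cdot, x_{i_j})$ where $\alpha$ (in $\R^N$, or in $\R^{N\times q}$) solves $(A_N + \lambda I)\alpha = y_N$, and this solution is unique since $A_N + \lambda I$ is positive definite. Substituting the Cholesky factorization gives $\alpha = (V V^T)^{-1} y_N = V^{-T}\bigl(V^{-1} y_N\bigr)$. So, setting $c := V^{-1} y_N$, we already have $\alpha = V^{-T} c$, which is the formula in the statement; it remains only to check that the recursion defining $\tilde s_N$ (equivalently, its coordinates $c_k$ in the orthonormal Newton basis) produces exactly this vector $c$, and that $\tilde s_N$ expands in kernel translates with coefficient vector $\alpha$.

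The key observation is that the displayed recursion for $c$ is nothing but forward substitution for the lower‑triangular system $V c = y_N$. Indeed $(Vc)_j = \sum_{k=1}^N v_k(x_{i_j})\, c_k = \bigl(\sum_k c_k v_k\bigr)(x_{i_j})$, so $Vc = y_N$ says precisely that $\tilde s_N := \sum_k c_k v_k$ interpolates the data on $X_N$. Using $v_k(x_{i_j}) = 0$ for $k > j$ (lower‑triangularity of $V$) and $v_j(x_{i_j}) = \|\tilde v_j\|_{\calh_{\lambda}} = \sqrt{\tilde v_j(x_{i_j})}$ from Proposition~\ref{def:newton}, the $j$‑th equation of $Vc = y_N$ reads
\begin{align*}
\tilde s_{j-1}(x_{i_j}) + c_j\, \sqrt{\tilde v_j(x_{i_j})} = y_{i_j}, \qquad \tilde s_{j-1} := \sum_{k=1}^{j-1} c_k v_k,
\end{align*}
and solving for $c_j$ gives exactly the recursion in the statement (up to the normalization identity $v_j(x_{i_j}) = \sqrt{\tilde v_j(x_{i_j})}$, and noting that once $c_j$ is fixed the new term makes $\tilde s_j$ interpolate at $x_{i_j}$, so $\tilde s_{j-1}(x_{i_j})$ may be rewritten with $\tilde s_j$). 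Thus building $c_1, c_2, \dots$ successively via the recursion is the same as solving $Vc = y_N$. Finally, expanding $\tilde s_N = \sum_k c_k v_k = \sum_k c_k \sum_j \beta_{jk} K_{\lambda}(\cdot, x_{i_j}) = \sum_j (Bc)_j K_{\lambda}(\cdot, x_{i_j})$ shows that the coefficient of $K_{\lambda}(\cdot, x_{i_j})$ in $\tilde s_N$ is $(Bc)_j = (V^{-T}c)_j = \alpha_j$; replacing $K_{\lambda}$ by $K$ yields $s_N = \sum_j \alpha_j K(\cdot, x_{i_j})$ with $\alpha = V^{-T}c$, and $(A_N + \lambda I)\alpha = V V^T V^{-T} c = V c = y_N$ confirms this $s_N$ is the regularized interpolant. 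The telescoping $\tilde s_N = c_N v_N + \tilde s_{N-1}$ is immediate from the partial‑sum definition.

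The only substantive input is Proposition~\ref{def:newton} itself; granted the Cholesky factorization $A_N + \lambda I = VV^T$ and the relation $B = V^{-T}$, the rest is bookkeeping. I expect the one place to be careful is matching the normalization so that the recursion genuinely coincides with the forward‑substitution step $c_j = \bigl(y_{i_j} - \tilde s_{j-1}(x_{i_j})\bigr)/v_j(x_{i_j})$, and the vector‑valued case $q > 1$, which requires nothing new: one reads $y_N$, $c$, $\alpha$ as $N \times q$ matrices and all the identities hold row‑wise.
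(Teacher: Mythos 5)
Your proof is correct. Note that the paper itself gives no proof of this proposition: it defers to the cited references (Pazouki--Schaback and Santin--Wittwar--Haasdonk), so there is no in-text argument to compare against. Your route --- reading the recursion for $c$ as forward substitution for the lower-triangular system $Vc=y_N$, then combining $\alpha=V^{-T}c$ with the pivoted Cholesky factorization $A_N+\lambda I=VV^T$ to verify $(A_N+\lambda I)\alpha=y_N$ --- is the standard argument and uses exactly the ingredients Proposition~\ref{def:newton} supplies. You also correctly identify and repair the two typos in the statement: the numerator should read $\tilde s_{N-1}(x_{i_N})$ rather than $\tilde s_N(x_{i_N})$ (as written it is circular and, since $\tilde s_N$ interpolates at $x_{i_N}$, would force $c_N=0$), and the denominator should be $v_N(x_{i_N})=\sqrt{\tilde v_N(x_{i_N})}$. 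One small quibble: your parenthetical remark that ``$\tilde s_{j-1}(x_{i_j})$ may be rewritten with $\tilde s_j$'' is not literally true, since these two values differ by $c_j v_j(x_{i_j})$; the correct reading is simply that the subscript in the paper's formula is a typo. This does not affect the validity of the argument. The distinction you draw between $\tilde s_N$ (an expansion in $K_\lambda$-translates) and $s_N$ (the same coefficients applied to $K$-translates) is exactly the point of the trailing remark in the paper, and your direct verification $(A_N+\lambda I)\alpha = Vc = y_N$ closes the loop cleanly.
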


\begin{remark}
In the case $\lambda = 0$ and $K$ SPD, the function $\tilde s_N$ coincides with the interpolant $s_N$. We refer to \cite{Pazouki2011,SaWiHa2018} for the details. 
\end{remark}

We are now left to define the selection rules, represented by $\eta$, to select the new point at each iteration. 

For this, we first need to define the power function, which gives an upper bound on the interpolation error, and it can be defined using the Newton basis as
\begin{align}\label{eq:update_pf}
P_N(x)^2:= K_{\lambda}(x, x) - \sum_{j=1}^N v_j(x)^2. 
\end{align}
Its relevance is due to the fact that it provides an upper bound on the pointwise (regularized) interpolation error, i.e., if $x\notin X_n$, and $K$ is PD, 
or SPD when $\lambda=0$, it holds for all $f\in\calh$ that 
\begin{align}\label{eq:upper_bound_pf}
\left|f(x) - s_N(x)\right|\leq P_n(x) \norm{\calh}{f}.
\end{align}
This function is well known and studied in the case of pure interpolation (see e.g. \cite[Chapter 11]{Wendland2005}), for which the upper bound holds for all 
$x\in\Omega$, and it can be easily extended to the case of regularized interpolation (see \cite{SaWiHa2018}). In both cases, it can be proven that $P_n(x) = 0$ if and 
only if $x\in X_n$, and its maximum is strictly decreasing with $N$.

\begin{remark}
This interpolation technique is strictly related to the Kriging method and to Gaussian Process Regression (see e.g. \cite{Olea2012, Rasmussen2006}. In 
this case the kernel represents the covariance kernel 
of the prior distribution, and the power function is the Kriging  variance, or variance of the posterior distribution (see \cite{scheuerer2013}).
\end{remark}

We can then define the following selection rules. We assume to have a dataset $(X_n, Y_n)$, and to have already selected $N$ points corresponding to indexes $I_{N-1}$. 
We use the notation $[1,n]:=\{1, \dots, n\}$, and we have
\begin{itemize}
 \item {$P$-greedy:} \hspace{1cm}$i_N:= \argmax\limits_{i\in [1,n] \setminus I_{N-1}} P_{N-1}(x_i)$
 \item {$f$-greedy:} \hspace{1cm}$i_N:= \argmax\limits_{i\in [1,n] \setminus I_{N-1}} \left|y_i - s_{N-1}(x_i)\right|$
 \item {$f/P$-greedy:} \hspace{.5cm}$i_N:= \argmax\limits_{i\in [1,n] \setminus I_{N-1}} \frac{\left|y_i - s_{N-1}(x_i)\right|}{P_{N-1}(x_i)}$.
\end{itemize}

Observe that all the selections are well defined, since $P_{N-1}(x_i)\neq 0$ for all $i\notin I_{N-1}$ if $X_N$ are pairwise distinct, and they can be efficiently 
implemented 
by using the update rules \eqref{eq:update_interp} for $s_N$ and \eqref{eq:update_pf} for $P_N$. Moreover, they are motivated by different ideas: 
The $P$-greedy selection tries to minimize the Power function, thus providing a uniform upper bound on the error for any function $f\in\calh$ via 
\eqref{eq:upper_bound_pf}; the $f$- and $f/P$-greedy (which reads ``$f$-over-$P$-greedy''), on the other hand, use also the output data, and produce points which are 
suitable to approximate a single 
function and thus are expected to result in a better approximation. In the case of $f$-greedy this is done by including in the set of points the location where the 
current largest error is achieved, thus reducing the maximum 
error. The $f/P$-greedy selection, instead, reduces the error in the $\calh$-norm, and indeed it can be proven to be locally optimal, i.e., it 
guarantees the maximal possible reduction of the error, in the $\calh$-norm, at each iteration.

We can now describe the full computation of the greedy regularized interpolant in Algorithm \ref{alg:greedy_newton}. It 
realizes the computation of the sparse surrogate $s_N$ by selecting the points $X_N$ via the index set $I_N$, and computing only the nonzero coefficients $\alpha$.
Moreover, using the nested structure of the Newton basis and the incremental computation of Proposition \ref{prop:newton_interpolation}, the algorithm needs only to 
compute the columns of the full kernel matrix corresponding to the index set $I_N$, and thus there is no need to compute nor store the full $n\times n$ matrix, i.e., the 
implementation is matrix-free. In addition, again using Proposition \ref{prop:newton_interpolation} most of the operations are done in-place, i.e., some vectors are used 
to store and update the values of the Power 
Function and of $y$. In the algorithm, we use a Matlab-like notation, i.e., $A(I_N, :)$ denotes the submatrix of $A$ consisting of rows $I_N$ and of all the columns. 
Moreover, the notation $v^2$ denotes the pointwise squaring of the entries of the vector $v$.

\begin{algorithm}[h!t]
  \caption{Kernel greedy approximation - Offline phase}
  \begin{algorithmic}[1]
      \State{Input: training set $X_{n}\in\Omega^{n}$, $Y_{n}\in\left(\R^{q}\right)^{n}$, kernel $K$ (and kernel parameters), regularization parameter $\lambda\geq 
0$, selection rule $\eta$, tolerance $\tau$.}
      \State{Set $N:=0$, $I_0:=\emptyset$, $V:=[\cdot ]\in\R^{n\times 0}$, $p:=\mbox{diag}(K(X_n, X_n))\in\R^n$}
       \Repeat
        \State{Set $N=N+1$}
        \State{Select $i_N := \argmax\limits_{i\in [1,n]\setminus I_{N-1}}\ \eta(x_i, N, X_n, Y_n)$.}
	\State{Generate column $v := K(X_n, x_{i_N})$}
	\State{Project $v:= v - V  V(i_N, :)^T$}
	\State{Normalize $v = v / \sqrt{v(i_N)}$}
	\State{Compute $c_N:=y(i_N)/\sqrt{v(i_N)}$}
	\State{Update the power function $p:= p - v^2$}
	\State{Update the residual $y:= y - c_N  v$}
	\State{Update the inverse $C = V(I_N, :)^{-1}$}
	\State{Add the column $V = [V, v_N]$}
	\State{Add the coefficient $c = [c^T, c_N]^T$}\label{linec}
	\State{Update $I_N:=I_{N-1}\cup \{i_N\}$}
    \Until{$\eta\left(x_{N}, N, X_n, Y_n\right)\leq\tau $}
     \State{Set $\alpha = C c$}
     \State{Output: $\alpha\in\R^{N\times q}$, $I_N$.}
    \end{algorithmic}\label{alg:greedy_newton}
\end{algorithm}
The set of points $X_N$ defined by $I_N$, and the coefficients $\alpha$, can then be used in the online phase of Algorithm \ref{alg:online}.

\begin{remark}[Vector valued functions and implementation details]
Algorithm \ref{alg:greedy_newton} and the overall procedure are well defined for arbitrary $q\geq 1$. Indeed, using the separable matrix valued kernel of Section 
\ref{sec:mat_val}, the 
Newton basis only depends on the scalar valued kernel $K$, while the computation of the coefficients is valid by considering that now $c$, $\alpha$ are matrices 
instead of vectors. In particular, the computation of $c_N$ (line 14) and the update of $y$ (line 11) has to be done via column-wise multiplications. 

Moreover, observe that in line 12 we employ a standard technique to update the inverse of a lower triangular matrix, i.e., given $V_N\in\R^{N\times N}$ lower 
triangular with inverse $V_N^{-1}$, we define
\begin{align*}
V_{N+1}=\left[\begin{array}{cc}
        V_N & 0\\v^T&w 
        \end{array}
\right] 
\end{align*}
for $v\in\R^N$, $w\in\R$, and compute $V_{N+1}^{-1}$ by a simple row-update as
\begin{align*}
V_{N+1}^{-1}=\left[\begin{array}{cc}
        V_N^{-1} & 0\\- v^T V_N^{-1}/w&1/w
        \end{array}
\right].
\end{align*}

The present version of the algorithm for vector-valued functions has been introduced in \cite{Wirtz2013} and named Vectorial Kernel Orthogonal Greedy Algorithm (VKOGA). 
We keep the same abbreviation also for the regularized version, which has been studied in \cite{SaWiHa2018}.
\end{remark}

\begin{remark}[Convergence rates]
When the greedy algorithm is run by selecting points over $\Omega$ instead of $X_N$, there are also convergence rates for the resulting approximation processes. For pure 
interpolation (i.e., $K$ SPD, $\lambda = 0$) convergence of $f$-greedy has been proven in \cite{Mueller2009}, of $P$-greedy in \cite{SH16b}, and 
of $f/P$-greedy in 
\cite{Wirtz2013}, while in \cite{SaWiHa2018} the convergence rate of $P$-greedy has been extended to regularized interpolation. All the results make additional 
assumptions on the kernels, for which we refer to the cited literature. Nevertheless, we remark that the convergence rates for interpolation with $P$-greedy are 
quasi-optimal for translational invariant kernels, while the results for the other algorithms guarantee only a possibly significantly slower convergence rate. These 
results are believed to be significantly sub-optimal, since extensive experiments indicate that $f$- and $f/P$-greedy behave much better. This seems to suggest that 
there is space for a large improvement in 
the theoretical understanding of the methods. 
\end{remark}

\begin{remark}[Other techniques]
There are other techniques that can be applied to reduce the complexity of the evaluation of the surrogate $s$, which do not use greedy algorithms but instead
different approaches. First, there is a domain decomposition technique, known as Partition of Unity Method, which partitions $\Omega$ into subdomains, solves the 
(regularized) interpolation problem restricted to each patch, and then combines the results by a weighted sum of the local interpolants to obtain a global approximant. 
This 
method has the advantage that this offline phase can be completely parallelized. Moreover, when evaluating the surrogate only the few local interpolant having a support 
containing the test points have to be evaluated, thus requiring the evaluation of a few, small kernel expansions, thus providing a significant speed-up. The efficiency 
of this technique relies on an efficient search procedure to determine the local patches including the given points, which is the only limitation in the application to 
high dimensional problems. Both theoretical results and efficient implementations are available \cite{Wendland2002,Cavoretto2016h}.

Moreover, other sparsity-inducing techniques have been proposed, namely, the use of an $\ell_1$-regularization term \cite{Chen1998}, and the method of the Least Absolute 
Shrinkage and Selection Operator (LASSO) \cite{Tibshirani1996}.
\end{remark}

\section{Support Vector Regression}\label{sec:svr}
The second method that we present is Support Vector Regression (SVR) \cite{SS02}, which is based on different premises, but it still fits in the general 
framework of 
Section 
\ref{sec:surrogates_general}. In this case, we consider the $\varepsilon$-insensitive loss function 
\begin{align*}
L(s, X_n, Y_n):= \sum_{i=1}^n L_{\varepsilon}(s(x_i), y_i),\;\; L_{\varepsilon}(s(x_i), y_i): = \max(0, |s(x_i) - y_i| - \varepsilon),
\end{align*}
which is designed to linearly penalize functions $s$ which have values outside of an $\varepsilon$-tube around the data, while no distinction is made between 
function values that are inside this tube.

In this setting it is common to use the regularization parameter to scale the cost by a factor $1/\lambda$, and not 
the regularization term by a factor $\lambda$. The two choices are clearly equivalent, but we adopt here this different normalization to facilitate the comparison with 
the existing literature, and because this offers additional insights in the structure of the surrogate.

Since the problem is not quadratic (and not smooth), we first derive an equivalent formulation of the optimization problem \eqref{eq:minimization_problem}.
Assuming again that the output is scalar, i.e., $q=1$, the idea is to introduce non-negative slack variables $\xi^+,\xi^-\in\R^n$ which represent upper bounds on 
$L$ via
\begin{align}\label{eq:slack}
\xi_i^+&\geq \max(0, s(x_i) - y_i - \varepsilon),\;\;1\leq i\leq n,\\
\xi_i^-&\geq \max(0, y_i - s(x_i) - \varepsilon),\;\;1\leq i\leq n\nonumber,
\end{align}
and to minimize them in place of the original loss. 
With these new variables we can rewrite the optimization problem in the following equivalent way.

\begin{definition}[SVR - Primal form]\label{prop:svr_primal}
Let $\Omega$ be a nonempty set, $K$ a PD kernel on $\Omega$, $\lambda>0$ a regularization parameter. 
For a training set $(X_n, Y_n)$ the SVR approximant $\left(s, \xi^+, \xi^-\right)\in\calh\times\R^{2 n}$ is a solution of the quadratic optimization problem
\begin{align}\label{eq:svr_primal}
\min_{s\in\calh,\ \xi^+,\ \xi^-\in\R^n}& \frac{1}{\lambda} \onevec^T \left(\xi^+ + \xi^-\right) + \norm{\calh}{s}^2\\
\text{ s.t. } & s(x_i) - y_i - \varepsilon\leq \xi_i^+,\;\;1\leq i\leq n\nonumber\\
 -&s(x_i) +  y_i - \varepsilon\leq \xi_i^-,\;\;1\leq i\leq n\nonumber\\
 &\xi_i^+, \xi_i^-\geq 0,\;\;1\leq i\leq n\nonumber,
\end{align}
where $\onevec:= (1,\dots, 1)^T\in\R^n$. 
\end{definition}

For this rewriting of the optimization problem, we can now specialize the Representer Theorem as follows. 
\begin{cor}[SVR - Alternative primal form]\label{cor:svr_primal}
Let $\Omega$ be a nonempty set, $K$ a PD kernel on $\Omega$, $\lambda>0$ a regularization parameter. 
For any training set $(X_n, Y_n)$ there exists an SVR approximant of the form
\begin{align}\label{eq:svr_ansatz}
s(x) = \sum_{j=1}^n \alpha_j K(x, x_j),\;\;x \in\Omega,
\end{align}
where $\left(\alpha, \xi^+, \xi^-\right)\in\R^{3 n}$ is a solution of the quadratic optimization problem
\begin{align}\label{eq:svr_primal}
\min_{\alpha, \xi^+, \xi^-\in\R^n}& \frac{1}{\lambda} \onevec^T \left(\xi^+ + \xi^-\right) + \alpha^T A \alpha\\
\text{ s.t. } & (A \alpha)_i - y_i - \varepsilon\leq \xi_i^+,\;\;1\leq i\leq n\nonumber\\
 -&(A \alpha)_i +  y_i - \varepsilon\leq \xi_i^-,\;\;1\leq i\leq n\nonumber\\
 &\xi_i^+, \xi_i^-\geq 0,\;\;1\leq i\leq n\nonumber,
\end{align}
with $\onevec:= (1,\dots, 1)^T\in\R^n$, and $A\in\R^{n\times n}$, $A_{ij}:=K(x_i, x_j)$, the kernel matrix on $X_n$. Moreover, if $K$ is SPD this is the unique 
solution of the minimization problem \eqref{eq:minimization_problem}.
\end{cor}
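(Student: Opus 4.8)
The plan is to read the statement off from the Representer Theorem (Theorem~\ref{th:repre_theorem}) applied to the $\varepsilon$-insensitive loss, and then to rewrite the resulting finite-dimensional problem by means of Proposition~\ref{prop:two_rkhs}. First I would note that $L(s,X_n,Y_n)=\sum_{i=1}^n L_\varepsilon(s(x_i),y_i)$ depends on $s$ only through the values $s(x_i)$, and that $L_\varepsilon$ is convex; hence the cost of \eqref{eq:minimization_problem} (equivalently the SVR cost $\tfrac1\lambda L+\norm{\calh}{\cdot}^2$, which differs from a cost of the form in Theorem~\ref{th:repre_theorem} only by the harmless rescaling mentioned in the text) is convex, continuous, and coercive in $\norm{\calh}{s}$ because $L\geq 0$, so a minimizer exists; Theorem~\ref{th:repre_theorem} then yields one of the form $s=\sum_{j=1}^n\alpha_j K(\cdot,x_j)$ for some $\alpha\in\R^n$.

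Next I would check that the slack formulation of Definition~\ref{prop:svr_primal} is genuinely equivalent to this problem: at any feasible point the constraints force $\xi_i^+\geq\max(0,s(x_i)-y_i-\varepsilon)$ and $\xi_i^-\geq\max(0,y_i-s(x_i)-\varepsilon)$, and since the objective is strictly increasing in each $\xi_i^\pm$, an optimum attains these with equality; as at most one of the two right-hand sides is positive (because $\varepsilon\geq0$), their sum equals exactly $L_\varepsilon(s(x_i),y_i)$, so minimizing over $(s,\xi^+,\xi^-)$ reduces to minimizing $\tfrac1\lambda L+\norm{\calh}{\cdot}^2$ over $s$. Substituting the ansatz $s=\sum_j\alpha_j K(\cdot,x_j)$ and invoking Proposition~\ref{prop:two_rkhs}, namely $s(x_i)=(A\alpha)_i$ and $\norm{\calh}{s}^2=\alpha^TA\alpha$, turns the constraints into $(A\alpha)_i-y_i-\varepsilon\leq\xi_i^+$, $-(A\alpha)_i+y_i-\varepsilon\leq\xi_i^-$, $\xi_i^+,\xi_i^-\geq0$, and the objective into $\tfrac1\lambda\onevec^T(\xi^++\xi^-)+\alpha^TA\alpha$, which is precisely the stated quadratic program; feasibility is clear and a solution exists by the preceding argument together with the existence of $\alpha$.

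Finally, the uniqueness claim for $K$ SPD. The point to get right --- and the main obstacle --- is to upgrade ``there is a minimizer in $V(X_n)$'' to ``every minimizer lies in $V(X_n)$'': revisiting the proof of Theorem~\ref{th:repre_theorem}, for $g=s+s^\perp$ with $s\in V(X_n)$ and $s^\perp\in V(X_n)^\perp$ one has, for the cost $J$, the identity $J(s)=J(g)-c\,\norm{\calh}{s^\perp}^2$ with $c>0$ the coefficient of the norm-squared term, so a minimizer must satisfy $s^\perp=0$, i.e.\ $g=s\in V(X_n)$. When $K$ is SPD the matrix $A$ is invertible, hence $\{K(\cdot,x_j)\}_{j=1}^n$ are linearly independent and $\alpha\mapsto\sum_j\alpha_j K(\cdot,x_j)$ is a bijection onto $V(X_n)$; eliminating the slacks as above, the reduced objective in $\alpha$ is $\tfrac1\lambda\sum_i L_\varepsilon((A\alpha)_i,y_i)+\alpha^TA\alpha$, a convex function of $\alpha$ plus the strictly convex quadratic $\alpha^TA\alpha$ (strict because $A$ is positive definite), so it has a unique minimizer $\alpha^\star$, and then $\xi_i^+=\max(0,(A\alpha^\star)_i-y_i-\varepsilon)$, $\xi_i^-=\max(0,y_i-(A\alpha^\star)_i-\varepsilon)$ are the uniquely determined optimal slacks. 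Thus the non-uniqueness that the linear slack variables could a priori introduce is excluded, and uniqueness in $\calh$ follows from positivity of the regularization term together with strict positive definiteness.
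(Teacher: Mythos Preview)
Your proposal is correct and follows the same route as the paper: invoke the Representer Theorem for the $\varepsilon$-insensitive loss, then substitute the ansatz $s=\sum_j\alpha_j K(\cdot,x_j)$ into the slack-variable primal form of Definition~\ref{prop:svr_primal} and use Proposition~\ref{prop:two_rkhs} to rewrite $s(x_i)=(A\alpha)_i$ and $\norm{\calh}{s}^2=\alpha^TA\alpha$. The paper's own proof is a one-line sketch of exactly this, whereas you additionally justify existence, the equivalence of the slack formulation, and---most usefully---the uniqueness claim for $K$ SPD (which the paper asserts but does not argue); your treatment of uniqueness via the strict inequality $J(s)<J(g)$ when $s^\perp\neq 0$ and strict convexity of $\alpha\mapsto\alpha^TA\alpha$ is the right way to fill that gap.
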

\begin{proof}
The result is an immediate consequence of Proposition \ref{prop:svr_primal}, where we use the form \eqref{eq:svr_ansatz} for $s$ and compute 
its squared norm via Proposition \ref{prop:two_rkhs}.
\end{proof}

The slack variables \eqref{eq:slack} have a nice geometric interpretation. Indeed, the optimization process clearly tries to reduce their value as much as possible, 
while respecting the constraints. We state a more precise result in the following proposition, and give a schematic illustration in Figure \ref{fig:slack}.
\begin{proposition}[Slack variables]
Let $\alpha, \xi^+,\xi^-\in\R^n$ be a solution of \eqref{eq:svr_primal}, and let $s$ be the corresponding surrogate \eqref{eq:svr_ansatz}. 
Then, for each index $i\in\{1,\dots, n\}$, the values $\xi_i^+$, $\xi_i^-$ represent the distance of 
$s(x_i)$ from the $\varepsilon$-tube around $y_i$, and in particular
\begin{enumerate}
 \item If $s(x_i)>y_i+\varepsilon$, then $\xi_i^+>0$ and $\xi_i^-=0$
 \item If $s(x_i)<y_i-\varepsilon$, then $\xi_i^+=0$ and $\xi_i^->0$
 \item If $y_i-\varepsilon\leq s(x_i)\leq y_i+\varepsilon$, then $\xi_i^+=0$ and $\xi_i^-=0$.
\end{enumerate}
In particular, only one of $\xi_i^+$ and $\xi_i^-$ can be nonzero.
\end{proposition}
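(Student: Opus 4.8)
The plan is to use the fact that the slack variables are completely decoupled in the optimization problem \eqref{eq:svr_primal} --- both in the objective and in the constraints --- together with the fact that the objective is strictly increasing in each of them. First I would observe that $\xi_i^+$ enters \eqref{eq:svr_primal} only through the term $\frac{1}{\lambda}\xi_i^+$ of the objective and through the two constraints $\xi_i^+\geq (A\alpha)_i - y_i-\varepsilon = s(x_i)-y_i-\varepsilon$ and $\xi_i^+\geq 0$, and symmetrically $\xi_i^-$ enters only through $\frac{1}{\lambda}\xi_i^-$, $\xi_i^-\geq y_i-s(x_i)-\varepsilon$ and $\xi_i^-\geq 0$; in particular, altering a single slack variable never affects the feasibility of any other constraint.

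Next, since $1/\lambda>0$, I would argue that at an optimal triple $(\alpha,\xi^+,\xi^-)$ each slack must attain the smallest value its two constraints allow. Indeed, if $\xi_i^+ > \max(0,\,s(x_i)-y_i-\varepsilon)$ for some $i$, one could replace $\xi_i^+$ by $\max(0,\,s(x_i)-y_i-\varepsilon)$: by the decoupling just noted this keeps all constraints satisfied while strictly decreasing the objective, contradicting optimality; the same applies to $\xi_i^-$. This yields
\begin{align*}
\xi_i^+ = \max(0,\,s(x_i)-y_i-\varepsilon),\qquad \xi_i^- = \max(0,\,y_i-s(x_i)-\varepsilon),\qquad 1\leq i\leq n,
\end{align*}
which identifies $\xi_i^+$ and $\xi_i^-$ with the positive parts of the signed distances of $s(x_i)$ from the top $y_i+\varepsilon$ and the bottom $y_i-\varepsilon$ of the $\varepsilon$-tube $[y_i-\varepsilon,\,y_i+\varepsilon]$, i.e.\ the claimed geometric interpretation.

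From these two identities the three cases are immediate. If $s(x_i)>y_i+\varepsilon$, the first argument is strictly positive while the second satisfies $y_i-s(x_i)-\varepsilon<-2\varepsilon\leq 0$, so $\xi_i^+>0$ and $\xi_i^-=0$; the case $s(x_i)<y_i-\varepsilon$ is symmetric; and if $y_i-\varepsilon\leq s(x_i)\leq y_i+\varepsilon$ both maxima are attained at $0$, so $\xi_i^+=\xi_i^-=0$. Finally, $\xi_i^+>0$ forces $s(x_i)>y_i+\varepsilon$ and $\xi_i^->0$ forces $s(x_i)<y_i-\varepsilon$, and these are incompatible, so at most one of $\xi_i^+,\xi_i^-$ can be nonzero. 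I expect the only point needing a moment's care to be the very first step --- checking that the slacks really are separable in \eqref{eq:svr_primal}, so that lowering one of them in isolation preserves feasibility --- after which the rest is a one-line computation.
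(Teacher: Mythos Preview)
Your argument is correct. The key observation --- that each $\xi_i^{\pm}$ appears in the objective only through the linear term $\frac{1}{\lambda}\xi_i^{\pm}$ and in the constraints only through its own pair of inequalities --- is exactly right, and the separability lets you minimize each slack independently. From there the identities $\xi_i^+ = \max(0,\, s(x_i)-y_i-\varepsilon)$ and $\xi_i^- = \max(0,\, y_i-s(x_i)-\varepsilon)$ drop out, and the three cases plus the mutual exclusivity are immediate.

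As for comparison: the paper does not actually supply a proof of this proposition. It is stated, followed only by a schematic figure (Figure~\ref{fig:slack}) illustrating the geometry, and the text then moves on to the dual formulation. The related observations that appear later --- the complementarity KKT conditions \eqref{eq:kkt_compl1}--\eqref{eq:kkt_compl2} after Proposition~\ref{prop:svr_dual} --- concern the dual multipliers $\alpha_i^{\pm}$ rather than the primal slacks, so they do not constitute a proof of this statement either. Your direct primal argument is therefore not just an alternative route but in fact the missing justification; it is also the natural one, since it avoids any appeal to duality and works purely from the structure of \eqref{eq:svr_primal}.
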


\begin{figure}
\begin{center}
\includegraphics[width=0.8\textwidth]{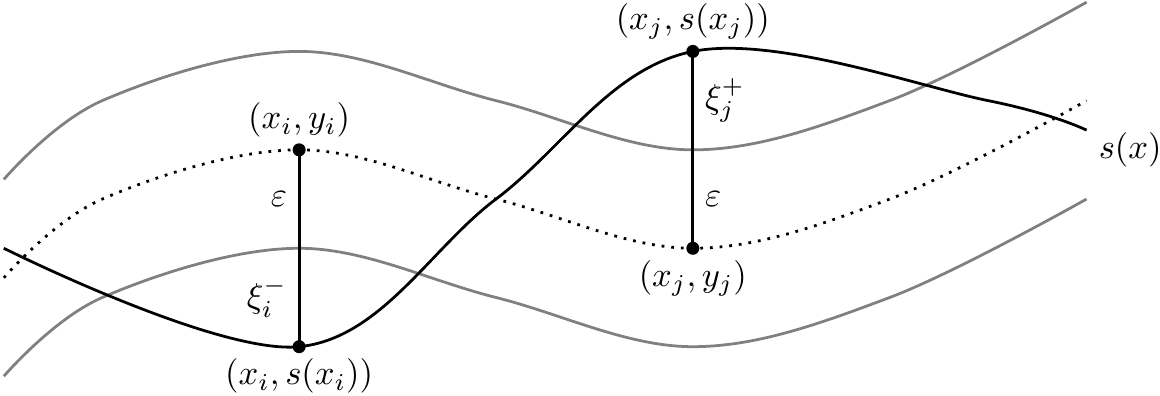} 
\end{center}
\caption{Illustration of the role of the slack variables in \eqref{eq:svr_primal}.}\label{fig:slack}
\end{figure}
Instead of solving the primal problem of Corollary \ref{cor:svr_primal}, it is more common to derive and solve the
following dual problem. 
Here again we denote as $y\in\R^n$ the vector of all scalar training target values.

\begin{proposition}[SVR - Dual form]\label{prop:svr_dual}
Let $\Omega$ be a nonempty set, $K$ a PD kernel on $\Omega$, $\lambda>0$ a regularization parameter. 
For any training set $(X_n, Y_n)$ there exists a solution $\left(\alpha^+, \alpha^-\right)\in \R^{2n}$ of the problem 
\begin{align}\label{eq:svr_dual}
\min\limits_{\alpha^+, \alpha^-\in \R^n}& \frac{1}{4} \left(\alpha^- - \alpha^+ \right)^T A\left(\alpha^- - \alpha^+\right) 
+ \varepsilon \onevec^T\left(\alpha^++\alpha^-\right)
+y^T\left(\alpha^+ - \alpha^-\right)\nonumber\\
\text{ s.t. } &\alpha^+, \alpha^-\in[0, 1/\lambda]^n,
\end{align}
which is unique if $K$ is SPD. Moreover, a solution of \eqref{eq:svr_primal} is given by 
\begin{align}\label{eq:svr_representation}
s(x) := \sum_{j=1}^n \frac{\alpha_j^--\alpha_j^+}{2} K(x, x_j),\;\;x \in\Omega,
\end{align}
with $\xi_i^+:= \max(0, s(x_i) - y_i - \varepsilon)$, $\xi_i^-:= \max(0, y_i - s(x_i) - \varepsilon)$.
\end{proposition}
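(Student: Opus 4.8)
The plan is to obtain \eqref{eq:svr_dual} as the Lagrangian dual of the convex quadratic program \eqref{eq:svr_primal} of Corollary \ref{cor:svr_primal}, and then to read off the claimed primal recovery from the KKT conditions. First I attach multipliers $\alpha^+,\alpha^-\in\R^n_{\geq0}$ to the two families of tube constraints and $\mu^+,\mu^-\in\R^n_{\geq0}$ to the sign constraints $\xi^\pm\geq0$, and form the Lagrangian
\begin{align*}
\mathcal{L} &= \alpha^T A\alpha + (\alpha^+-\alpha^-)^T A\alpha - (\alpha^+)^T(y+\varepsilon\onevec) + (\alpha^-)^T(y-\varepsilon\onevec)\\
&\quad + (\tfrac1\lambda\onevec-\alpha^+-\mu^+)^T\xi^+ + (\tfrac1\lambda\onevec-\alpha^--\mu^-)^T\xi^-.
\end{align*}
Minimising over the primal variables: stationarity in $\xi^\pm$ gives $\alpha_i^\pm+\mu_i^\pm=1/\lambda$, so that together with $\alpha^\pm,\mu^\pm\geq0$ one gets exactly the box $\alpha^\pm\in[0,1/\lambda]^n$ and the $\xi$-terms drop out; stationarity in $\alpha$ gives $A(2\alpha+\alpha^+-\alpha^-)=0$, which is solved by $\alpha=\tfrac12(\alpha^--\alpha^+)$. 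Plugging this $\alpha$ back, the quadratic part becomes $\alpha^TA\alpha+(\alpha^+-\alpha^-)^TA\alpha=\tfrac14 v^TAv-\tfrac12 v^TAv=-\tfrac14 v^TAv$ with $v:=\alpha^--\alpha^+$, and the linear part reduces to $y^Tv-\varepsilon\onevec^T(\alpha^++\alpha^-)$; maximising the resulting dual function over the box is, after negation, precisely the minimisation problem \eqref{eq:svr_dual}.

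For existence of a dual minimiser I note that \eqref{eq:svr_dual} minimises a continuous convex quadratic over the compact box $[0,1/\lambda]^{2n}$. For the primal recovery I invoke strong duality: the problem \eqref{eq:svr_primal} is a feasible convex quadratic program, bounded below by $0$, which attains its minimum by Corollary \ref{cor:svr_primal}, and Slater's condition is trivially satisfied (any $\alpha$ together with sufficiently large $\xi^\pm$ strictly satisfies all inequalities); hence optimal primal–dual pairs are exactly the solutions of the KKT system. Given a dual optimal $(\alpha^+,\alpha^-)$, I set $s(x):=\sum_j\tfrac{\alpha_j^--\alpha_j^+}{2}K(x,x_j)$ — equivalently $\alpha=\tfrac12(\alpha^--\alpha^+)$ in the ansatz \eqref{eq:svr_ansatz} — and $\xi_i^+:=\max(0,s(x_i)-y_i-\varepsilon)$, $\xi_i^-:=\max(0,y_i-s(x_i)-\varepsilon)$. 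Primal feasibility of $(s,\xi^+,\xi^-)$ holds by construction of the $\xi$'s, dual feasibility is the box constraint, stationarity is the identity used above, and complementary slackness holds because each $\xi_i^\pm$ is taken as the smallest value compatible with its constraint; therefore $(s,\xi^+,\xi^-)$ solves \eqref{eq:svr_primal}, which is the representation \eqref{eq:svr_representation}.

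For uniqueness when $K$ is SPD I argue by strict convexity. If $(\alpha^+,\alpha^-)$ and $(\beta^+,\beta^-)$ are two minimisers of \eqref{eq:svr_dual}, then so is their midpoint; since the objective equals the strictly convex quadratic $v\mapsto\tfrac14 v^TAv$ in $v=\alpha^--\alpha^+$ plus an affine term in $(\alpha^+,\alpha^-)$, equality of the optimal values at the midpoint forces $\alpha^--\alpha^+=\beta^--\beta^+=:d$. With $d$ fixed, writing $\alpha^-=\alpha^++d$ turns the objective into $2\varepsilon\onevec^T\alpha^+$ plus a constant, to be minimised over $\alpha_i^+\in[\max(0,-d_i),\min(1/\lambda,1/\lambda-d_i)]$, whose unique minimiser (for $\varepsilon>0$) is $\alpha_i^+=\max(0,-d_i)$; hence $(\alpha^+,\alpha^-)$ is unique. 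I expect the only delicate point to be the step from $A(2\alpha+\alpha^+-\alpha^-)=0$ to the choice $\alpha=\tfrac12(\alpha^--\alpha^+)$ when $A$ is merely PD and not invertible: this is harmless, since one only needs to exhibit one optimal $\alpha$ for each dual optimal pair, and strong duality then certifies optimality of the reconstructed $(s,\xi^+,\xi^-)$ regardless of invertibility of $A$.
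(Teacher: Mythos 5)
Your derivation follows the same route as the paper's (sketched) proof: the identical Lagrangian, the same stationarity conditions $\alpha=\tfrac12(\alpha^--\alpha^+)$ and $\alpha_i^\pm+\mu_i^\pm=1/\lambda$, and the same back-substitution producing the dual objective. Where you go further is in supplying the pieces the paper delegates to the literature: existence of a dual minimizer via compactness of the box, the treatment of singular $A$, and uniqueness for SPD kernels via strict convexity of $v\mapsto\tfrac14 v^TAv$ followed by the reduction, at fixed difference $d=\alpha^--\alpha^+$, to minimizing $2\varepsilon\onevec^T\alpha^+$ over the interval $[\max(0,-d_i),\min(1/\lambda,1/\lambda-d_i)]$. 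These additions are correct (the uniqueness argument does require $\varepsilon>0$, as you note), and they make your write-up more complete than the paper's own sketch, which only asserts uniqueness.

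The one step you justify too quickly is complementary slackness in the primal recovery. Taking $\xi_i^\pm$ as the smallest feasible values gives only half of what is needed: it forces $s(x_i)-y_i-\varepsilon-\xi_i^+=0$ when $s(x_i)\geq y_i+\varepsilon$, and $\xi_i^+=0$ when $s(x_i)<y_i+\varepsilon$, but it does not show that $\alpha_i^+=0$ whenever $s(x_i)<y_i+\varepsilon$, nor that $\alpha_i^+=1/\lambda$ whenever $\xi_i^+>0$ --- and both are required for the products in \eqref{eq:kkt_compl1}--\eqref{eq:kkt_compl2} to vanish. These facts are not ``by construction''; they follow from the first-order optimality conditions of the box-constrained dual at $(\alpha^+,\alpha^-)$: the partial derivative of the dual objective with respect to $\alpha_i^+$ equals $y_i+\varepsilon-s(x_i)$, and it must be nonnegative at $\alpha_i^+=0$, nonpositive at $\alpha_i^+=1/\lambda$, and zero in the interior, which yields exactly the missing implications (and symmetrically for $\alpha_i^-$). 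With that one observation inserted, your KKT-sufficiency argument closes and the proof is complete.
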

\begin{proof}
We give a sketch of the proof, although a formal derivation requires more care, and we 
refer to \cite[Chapter 9]{SS02} for the details. 
The idea is to first derive the Lagrangian $\mathcal L:= \mathcal L(\alpha, \xi^+, \xi^-; \alpha^+, \alpha^-,\mu^+,\mu^-)$ for the primal problem \eqref{eq:svr_primal} 
using non-negative Lagrange multipliers $\alpha^+$, $\alpha^-$, $\mu^+$, $\mu^-\in\R^n$ for the inequality constraints, and then derive the dual problem by imposing the 
Karush-Kuhn-Tucker (KKT) conditions (see e.g. Chapter 6 in \cite{SS02}).

The Lagrangian is defined as
\begin{align}\label{eq:svr_lagrangian}
\mathcal L = &\ \frac1\lambda \onevec^T \left(\xi^+ + \xi^-\right) + \alpha^T A \alpha + (\mu^+)^T (-\xi^+) + (\mu^-)^T(-\xi^-)\\
&+\left(A \alpha - y -\varepsilon \onevec -\xi^+\right)^T \alpha^+ +\left(y - A \alpha -\varepsilon \onevec -\xi^-\right)^T\alpha^-\nonumber\\
=& \left(\alpha  + \alpha^+ - \alpha^-\right)^T A\alpha + \left(\frac1\lambda \onevec - \alpha^+-\mu^+\right)^T\xi^+ + \left(\frac1\lambda \onevec - \alpha^- - 
\mu^-\right)^T\xi^-\nonumber\\
&- \varepsilon \onevec^T\left(\alpha^++\alpha^-\right)- y^T\left(\alpha^+ - \alpha^-\right) \nonumber.
\end{align}
Using the symmetry of $A$, the partial derivatives of $\mathcal L$ with respect to the primal variables can be computed as
\begin{align}\label{eq:KKT_stat}
\nabla_\alpha \mathcal L  = 2 A\alpha + A (\alpha^+ - \alpha^-),\;\;
\nabla_{\xi^+} \mathcal L  = \frac1\lambda \onevec-\alpha^+ - \mu^+,\;\;
\nabla_{\xi_i^-} \mathcal L  = \frac1\lambda \onevec-\alpha^- - \mu^-,
\end{align}
and setting these three equalities to zero we obtain equations for $\alpha$, $\mu^+,\mu^-$, where in particular $\alpha = \frac{1}{2} \left(\alpha^- - \alpha^+\right)$ 
(which is the unique solution if $A$ is invertible).  
Substituting these values in the Lagrangian we get
\begin{align*}
\mathcal L&=\left(\alpha  + \alpha^+ - \alpha^-\right)^T A\alpha - \varepsilon \onevec^T\left(\alpha^++\alpha^-\right)- y^T\left(\alpha^+ - \alpha^-\right)\\
&=-\frac{1}{4} \left(\alpha^- - \alpha^+ \right)^T A\left(\alpha^- - \alpha^+\right) - \varepsilon \onevec^T\left(\alpha^++\alpha^-\right)- 
y^T\left(\alpha^+ - \alpha^-\right).
\end{align*}
The remaining conditions in \eqref{eq:svr_dual} stem from the requirements that the Lagrange multipliers are 
non-negative, and in particular $0\leq \mu_i^+ =  1/\lambda -\alpha^+_i$, i.e., $\alpha_i^+\leq 1/\lambda$, and similarly for $\alpha_i^-$.
\end{proof}

This dual formulation is particularly convenient to explain that the SVR surrogate has a built-in sparsity, i.e., the optimization process provides a solution where 
possibly many of the entries of $\alpha = \frac{1}{2}(\alpha^- - \alpha^+)$ are zero. This behavior is in strong contrast with the case of interpolation of Section 
\ref{sec:interp} where we needed to adopt special techniques to enforce this property. The points $x_i\in X_n$ with $\alpha_i\neq 0$ are called support 
vectors, 
which gives the name to the method. 

In particular, as for the slack variables there is a clean geometric description of this sparsity pattern, that 
gives additional insights into the solution. To see this we remark that, in addition to the stationarity KKT conditions \eqref{eq:KKT_stat}, an optimal solution 
satisfies also the complementarity KKT conditions
\begin{alignat}{2}
\label{eq:kkt_compl1}\alpha^+_i\left(s(x_i) - y_i - \varepsilon  - \xi_i^+ \right) &= 0,\quad \alpha^-_i\left(y_i - s(x_i) - \varepsilon  - \xi_i^- \right) &= 0\ \\
\label{eq:kkt_compl2}\xi^+_i\left(1/\lambda-\alpha_i^+\right) &= 0,\quad \xi^-_i\left(1/\lambda-\alpha^-_i\right) &= 0.
\end{alignat}
We then have the following:
\begin{enumerate}
 \item Equations \eqref{eq:kkt_compl1} state that $\alpha_i^+\neq 0$ only if $s(x_i) - y_i - \varepsilon  - \xi_i^+ =0$, and similarly for $\alpha_i^-$. Since 
$\xi_i^+\geq 0$, this happens only when $s(x_i) - y_i \geq \varepsilon$, i.e., only for points $(x_i, s(x_i))$ which are outside or on the boundary of the 
$\varepsilon$-tube.
\item In particular, if $\alpha_i^+\neq 0$ it follows that $s(x_i) - y_i \geq \varepsilon$, and thus $y_i - s(x_i) - \varepsilon -\xi_i^-\neq 0$, and then 
necessarily $\alpha_i^-=0$. Thus, at most one of $\alpha_i^+$ and $\alpha_i^-$ can be nonzero.
\item Equations \eqref{eq:kkt_compl2} imply that $\alpha_i^+, \alpha_i^- = 1/\lambda$ whenever $\xi_i^+,\xi_i^-$ is nonzero, i.e., whenever $s(x_i)$ is strictly outside 
of the $\varepsilon$-tube. The corresponding $x_i$ are called bounded support vector, and the value of the corresponding coefficients is indeed kept bounded by the value 
of the regularization 
parameter. Reducing $\lambda$, i.e., using less regularization, allows solutions with coefficients of larger magnitude.
\end{enumerate}
In summary, we can then expect that if $s$ is a good approximation of the data, it will be also a sparse approximation.

We summarize the offline phase for SVR in Algorithm \ref{alg:svr_offline}. We remark that in this case the extension to vector-valued functions is not as 
straightforward as for kernel interpolation, and it is thus common to train a separate SVR for each output component.

\begin{algorithm}[h!t]
  \caption{SVR - Offline phase}
  \begin{algorithmic}[1]
      \State{Input: training set $X_{n}\in\Omega^{n}$, $Y_{n}\in\R^{n}$, kernel $K$ (and kernel parameters), regularization parameter $\lambda\geq 0$, 
tube width $\varepsilon>0$.}
        \State{Compute the kernel matrix $A\in \R^{n\times n}$, $A_{ij}:= K(x_i, x_j)$.}
        \State{Solve the quadratic problem \eqref{eq:svr_dual}.}
        \State{Set $I_N:=\{i : \alpha^-_i\neq 0 \text{ or } \alpha^+_i\neq 0\}$.}
        \State{Set $\alpha_i:=(\alpha_i^--\alpha^+_i)/2$ for $i\in I_N$.}
        \State{Output: $\alpha\in\R^N$, $I_N$.}
    \end{algorithmic}\label{alg:svr_offline}
\end{algorithm}

\begin{remark}[General Support Vector Machines]
SVR is indeed one member of a vast collection of algorithms related to Support Vector Machines (SVMs). Standard SVMs  solve classification problems, 
i.e., $Y_n\subset\{0,1\}$. The original algorithm has been introduced as a linear algorithm (or, in the present understanding, as limited to the linear kernel, i.e., the 
polynomial kernel with $a=0$, $p=1$), and it has later been extended via the kernel trick to its general kernel version in \cite{Boser1992}. The SVR algorithms have 
instead been introduced in \cite{SS02}.

Moreover, the version presented here is usually called $\varepsilon$-\gls*{svr}. There exists 
also another non equivalent version called $\nu$-SVR, which adds another term in the cost function multiplied by a factor $\nu\in[0,1]$. This has 
the role of giving an upper bound on the number of support vectors and on the fraction of training data which are outside of the 
$\varepsilon$-tube (see Chapter 9 in \cite{SS02}).

We also remark that it is sometimes common to include in any SVM-based algorithm also an offset or bias term $b\in\R$, i.e., to obtain a surrogate $s(x) = \sum_{j=1}^n 
\alpha_j K(x, x_j) + b$. This changes in an obvious way the primal problem \eqref{eq:svr_primal}, while the dual contains also the constraint $\sum_{i=1}^n 
\left(\alpha_i^+ + \alpha_i^-\right)=0$. However, we stick here to this formulation and refer to \cite{Steinwart2011} for a discussion of statistical and numerical 
benefits of not using this offset term, at least in the case of \gls*{spd} kernels.
\end{remark}

\begin{remark}[Error estimation]
Also for SVR there is a detailed error theory, usually formulated in the framework of statistical learning theory (see \cite{Vapnik1998}).
Results are obtained by assuming that the dataset $(X_n, Y_n)$ is drawn from a certain unknown probability distribution, and then quantifying the approximation 
power of the surrogate. For a detailed treatment of this theory, we refer to \cite{SS02,Steinwart2008}.
Moreover, recently also deterministic error bounds for translational invariant kernels have been proven in \cite{Rieger2008b,Rieger2009b}.
\end{remark}

\subsection{Sequential Minimal Optimization}\label{sec:smo}
Although the optimization problem \eqref{eq:svr_dual} can in principle be solved with any quadratic optimization method, there exists a special algorithm, called 
Sequential Minimal Optimization (SMO) that is designed for SVMs and that performs possibly much better.

SMO is an iterative method which improves an initial feasible guess for $\alpha^+, \alpha^-\in\R^n$ until convergence, and the update is made such that the minimal 
possible number of entries of $\alpha$ are affected. In this way, very large problems can be efficiently solved. The original version of the algorithm has been 
introduced in 
\cite{Platt1998} for SVM, and it has later been adapted to more general methods such as SVR, that we use here to illustrate the structure of its 
implementation.  

The idea is to find at each iteration $\ell\in\N$ a  minimal set of indexes $I^{\ell}\subset\{1,\dots, n\}$ and optimize only the variables with indexes in $I^{\ell}$. 
The procedure is then iterated until the optimum is reached. If the SVR includes an offset term, as explained in the previous section we have constraints
\begin{align}\label{eq:smo_cond}
&\alpha_i^+,\alpha_i^-\in [0, 1/\lambda],\;\;1\leq i\leq n,\\
&\sum_{i=1}^n \left(\alpha_i^+ + \alpha_i^-\right)=0.\nonumber
\end{align}
Given a feasible solution $\left(\alpha_i^+, \alpha_i^-\right)^{(\ell)}$ at iteration $\ell\in\N$, it is thus not possible to update a single entry of 
$\alpha_i^+$ or $\alpha_i^-$ without violating the KKT conditions (since at most one between $\alpha_i^+$ and $\alpha_i^-$ need to be nonzero) or violating the second 
constraint. It is instead possible to select  two indexes $I^\ell:=\{i,j\}$ and in this case we have variables $\alpha_i^+,\alpha_i^-,\alpha_j^+,\alpha_j^-$ 
and we can 
solve the restricted quadratic optimization problem under the constraints
\begin{align*}
\alpha_i^+,\alpha_i^-\in [0, 1/\lambda], i\in I^{\ell},\;\;\;\; \sum_{i\in I^{\ell}} \left(\alpha_i^+ + \alpha_i^-\right)=R^{\ell}:=-\sum_{i\notin 
I^{\ell}}\left(\alpha_i^+ + \alpha_i^-\right),
\end{align*}
which can be solved analytically. 

The crucial step is to select $I^\ell$, and this is done by finding a first index that does not satisfy the KKT 
conditions and a second one with some heuristic. It can be proven that, if at least one of the two violates the KKT conditions, then the objective 
is strictly decreased and convergence is obtained. Moreover, the vectors $\alpha^+=\alpha^-=0\in\R^n$ are always feasible and can thus be used as a 
first guess.
In practice, the iteration is stopped when a sufficiently small value of the cost function is reached.

In the case of SVR without offset discussed in the previous section the situation is even simpler, since the second constraint in \eqref{eq:smo_cond} is not present 
and it is thus possible to update a single pair $\left(\alpha_i^+, \alpha_i^-\right)$ at each iteration. Nevertheless, it has been proven in \cite{Steinwart2011} that 
using also in this case two indexes improves significantly the speed of convergence. Moreover, the same paper introduces several additional details to select the 
pair, to 
optimize the restricted cost function, and to establish termination conditions.

A general version of SMO for SVR is summarized in Algorithm \ref{alg:smo}, where we assume that the function $\eta:\{1,\dots, n\}\to \{1,\dots, n\}$ implements the 
selection rule of $I^\ell$.

\begin{algorithm}[h!t]
  \caption{SMO}
  \begin{algorithmic}[1]
      \State{Input: training set $X_{n}\in\Omega^{n}$, $Y_{n}\in\R^{n}$, kernel $K$ (and kernel parameters), regularization parameter $\lambda\geq 
0$, tube width $\varepsilon>0$, selection rule $\eta$, tolerance $\tau$.}
      \State{Set $\ell:=0$ and $\left(\alpha^+, \alpha^-\right)^{(0)} := (0,0)$.}    
        \While{$\left(\alpha^+, \alpha^-\right)^{(\ell)}$ does not satisfy KKT conditions within tolerance $\tau$.}
        \State{Set $\ell=\ell+1$.}
	\State{Set $I^\ell:=\{i, j\}:=\eta\left(\{1,\dots, n\}\right)$.}
	\State{Set $\left(\alpha_k^+, \alpha_k^-\right)^{(\ell)}:= \left(\alpha^+_k, \alpha_k^-\right)^{(\ell-1)}$ for $k\notin I^\ell$.}
	\State{Solve the optimization problem restricted to $I^\ell$.}
        \EndWhile
	\State{Set $I_N:=\{i : \alpha^-_i\neq 0 \text{ or } \alpha^+_i\neq 0\}$.}
        \State{Set $\alpha_i:=(\alpha_i^--\alpha^+_i)/2$ for $i\in I_N$.}
        \State{Output: $\alpha\in\R^N$, $I_N$.}
    \end{algorithmic}\label{alg:smo}
\end{algorithm}

\begin{remark}[Reference implementations]
We remark that there exists commonly used implementations of SVR (and other SVM-related algorithms), which are available in several programming languages and implement 
also some version of this algorithm. We mention 
especially LIBSVM\footnote{\url{https://www.csie.ntu.edu.tw/~cjlin/libsvm/}} \cite{libsvm} and liquidSVM\footnote{\url{http://www.isa.uni-stuttgart.de/software/}} 
\cite{liquidsvm}.
\end{remark}

\section{Model analysis using the surrogate}\label{sec:model_analysis}
Apart from predicting new inputs with good accuracy and a significant speedup, the surrogate model can be used to perform a variety of different tasks related to
meta-modeling, such as uncertainty quantification and state estimation. This can be done in a non-intrusive way, meaning that the full model is 
employed as a black-box that provides input-output pairs to train the surrogate, but is not required to be modified.

In principle, any kind of analysis that requires multiple evaluations can be significantly accelerated by the use of a surrogate, including the ones 
that are not computationally feasible due to the high computational cost of the full model. An example is uncertainty quantification, where the expected value of $f$ can 
be approximated by a Monte Carlo integration of $s$ using a set $X_m\subset\Omega$ of integration points, i.e., 
\begin{align*}
\int_{\Omega} f(x) dx \approx \frac{1}{m} \sum_{i=1}^m s(x_i).
\end{align*}
Once the surrogate is computed using a training set $(X_n, Y_n)$, this approximate integral can be evaluated also for $m\gg n$ with a possibly very small cost, since the 
evaluation of $s$ is significantly cheaper than the one of $f$.

Another example, which we describe in detail in the following, is the solution of an inverse problem to estimate the input parameter which generated a given output,  
i.e., from a given  
vector $\overline y\in\R^q$ we want to estimate $x\in\Omega$ such that $f(\overline x) = \overline y$.
This can be done by considering a state-estimation cost function $C:\Omega\to\R$ defined by
\begin{align}\label{eq:cost_state_estim}
C(x):= \frac{1}{2 \norm{2}{\overline y}^2} \norm{2}{s(x) - \overline y}^2,
\end{align}
and estimating $\overline x$ by the value $x^*$ defined as 
\begin{align*}
x^* := \min_{x\in\Omega} C(x).
\end{align*}
In principle, we could perform the same optimization also using $f$ instead of $s$ in \eqref{eq:cost_state_estim}, but the surrogate allows a rapid 
evaluation of $C$. Moreover, if $K$ is at least differentiable, then also $s$ is differentiable, and thus we can use gradient-based methods to minimize $C$. 

To detail this approach, we assume $f:\Omega\to\R^q$ and to have a surrogate obtained as in Section \ref{sec:greedy} with the separable matrix valued kernel of Section 
\ref{sec:mat_val}, i.e., from  \eqref{eq:greedy_surrogate} we have
\begin{align*}
s_N(x)= \sum_{k\in I_N}  \alpha_k K(x, x_k). 
\end{align*}
As explained in \eqref{eq:mat_val_coeffs}, in the vector-valued case $q>1$ we always assume that the output $s_N(x)$ and the coefficients $\alpha_k$ are row 
vectors, and in particular $\alpha \in \R^{N\times q}$ and $s_N(x)\in \R^{1\times q}$. 
In this case we have the following.

\begin{proposition}[Gradient of the state estimation cost]\label{prop:gradient_of_the_cost}
For $x\in\Omega\subset\R^d$ and $\overline y\in\R^q$, the gradient of the cost \eqref{eq:cost_state_estim}  can be computed in $x\in\Omega$ as
\begin{align*}
\nabla C(x) = \frac{1}{\norm{2}{\overline y}^2} (D \alpha) E^T,
\end{align*}
where $D\in\R^{d\times N}$ with columns $D_{j}:= \nabla_{x} K(x, x_j)$, and $E:= s_N(x)-\overline y\in \R^{1\times q}$.
\end{proposition}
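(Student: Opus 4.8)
The plan is to reduce the statement to a componentwise application of the ordinary chain rule and then to repackage the resulting double sum as the stated matrix product; the only real care needed is in tracking the row-/column-vector conventions fixed around \eqref{eq:mat_val_coeffs}.

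First I would set $E := s_N(x) - \overline y \in \R^{1\times q}$, so that $C(x) = \frac{1}{2\norm{2}{\overline y}^2}\,E E^T = \frac{1}{2\norm{2}{\overline y}^2}\sum_{\ell=1}^q E_\ell^2$, where $E_\ell$ is the $\ell$-th entry of the row vector $E$. By definition of $s_N$, its $\ell$-th component is $(s_N(x))_\ell = \sum_{k\in I_N}\alpha_{k\ell}\,K(x,x_k)$, with $\alpha_{k\ell}$ the $(k,\ell)$ entry of the coefficient matrix $\alpha\in\R^{N\times q}$, and hence $E_\ell = (s_N(x))_\ell - \overline y_\ell$.

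Next, since $K$ is assumed differentiable, each scalar map $x\mapsto (s_N(x))_\ell$ is a finite linear combination of the differentiable functions $K(\cdot,x_k)$ and is therefore differentiable, with $\nabla_x (s_N(x))_\ell = \sum_{k\in I_N}\alpha_{k\ell}\,\nabla_x K(x,x_k) = \sum_{k\in I_N}\alpha_{k\ell}\,D_k$, where $D_k := \nabla_x K(x,x_k)$ is the corresponding column of $D$. Differentiating $C = \frac{1}{2\norm{2}{\overline y}^2}\sum_\ell E_\ell^2$ by the chain rule (the $\overline y_\ell$ are constants) then gives
\begin{align*}
\nabla C(x) = \frac{1}{\norm{2}{\overline y}^2}\sum_{\ell=1}^q E_\ell\,\nabla_x (s_N(x))_\ell = \frac{1}{\norm{2}{\overline y}^2}\sum_{k\in I_N}\Bigl(\sum_{\ell=1}^q \alpha_{k\ell}E_\ell\Bigr)D_k.
\end{align*}

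Finally I would observe that the inner sum is exactly $(\alpha E^T)_k$, so that the outer sum equals $\sum_{k\in I_N}(\alpha E^T)_k D_k = D(\alpha E^T) = (D\alpha)E^T$ by associativity of matrix multiplication, which yields $\nabla C(x) = \frac{1}{\norm{2}{\overline y}^2}(D\alpha)E^T$, as claimed. No step is genuinely hard; the main point to watch is simply that $s_N(x)$ and the $\alpha_k$ are row vectors, so that $\alpha\in\R^{N\times q}$, $D\alpha\in\R^{d\times q}$ and $E^T\in\R^{q\times1}$ all have compatible shapes, and that term-by-term differentiation under the finite sum defining $s_N$ is legitimate by the assumed differentiability of the kernel.
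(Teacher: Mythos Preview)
Your proposal is correct and follows essentially the same approach as the paper: compute the Jacobian of $s_N$ by linearity of differentiation, then apply the chain rule to the squared-norm cost. The only difference is presentational: the paper works directly in matrix notation, writing $\nabla s_N(x)=D\alpha$ and then $\nabla C(x)=\frac{1}{\norm{2}{\overline y}^2}(D\alpha)E^T$ in two lines, whereas you unpack everything into componentwise sums over $\ell$ and $k$ before reassembling the matrix product; your version is more explicit about dimensions and the legitimacy of each step, but the underlying argument is identical.
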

\begin{proof}
By linearity, the gradient of $s_N$ in $x$ can be computed as  
\begin{align*}
\nabla s_N(x) = \sum_{j=1}^n \alpha_j \nabla_x K(x, x_j) = D \alpha \in \R^{d\times q}, 
\end{align*}
and thus 
\begin{align*}
\nabla C(x)  
&= \frac{1}{\norm{2}{\overline y}^2} \left(s_N(x)-\overline y \right)\nabla_x \left(s_N(x)-\overline y \right)
 = \frac{1}{\norm{2}{\overline y}^2} \left(s_N(x)-\overline y \right)\nabla s(x)\\
&= \frac{1}{\norm{2}{\overline y}^2} (D \alpha) E^T.
\end{align*}
\end{proof}
Observe in particular that whenever $K$ is known in closed form the matrix $D$ can be explicitly computed, and thus the gradient can be assembled using only 
matrix-vector multiplications of matrices of dimensions $N, d, q$, but independent of $n$. The solution $x^*$ can then be computed by any gradient-based 
optimization method, and each 
iteration can be performed in an efficient way.

\section{Parameter and model selection}\label{sec:parameter_selection}
For all the methods that we have seen the approximation quality of the surrogate depends on several  parameters, which need to be carefully chosen to obtain good 
results. 
There are both parameters defining the kernel, such as the shape parameter $\gamma>0$ in a RBF kernel, and model parameters such as the regularization 
parameter 
$\lambda\geq0$. To some extent, also the selection of the kernel itself can be considered as a parametric dependence of the model.
Moreover, it is essential to test the quality of the surrogate on an independent test set of data, since tuning it on the training set alone can very likely lead  
to overfitting, i.e., to obtain a model that is excessively accurate on the training set, while failing to generalize its prediction capabilities to unseen data.

In practical applications the target function $f$ is unknown, so it cannot be used to check if the approximation is good, and all we know is the training set 
$(X_n, Y_n)$. In this case the most common approach is to split the sets into train, validation and test sets in the following sense. We 
permute $(X_n, Y_n)$, fix numbers $n_{tr}$, $n_{val}$, $n_{te}$ such that $n = n_{tr}+n_{val}+n_{te}$, and define a partition of the dataset as 
\begin{align*}
X_{tr} &:= \{x_i, 1\leq i \leq n_{tr}\}\\
X_{val} &:= \{x_i, {n_{tr} + 1}\leq i\leq n_{tr} + n_{val}\}\\
X_{te} &:= \{x_i, {n_{tr} + n_{val} + 1}\leq i\leq n\},
\end{align*}
and similarly for $Y_{tr}$, $Y_{val}$, $Y_{te}$.

The idea is then to use the validation set $\left(X_{val},Y_{val}\right)$ to validate (i.e., choose) the parameters, and the test set $\left(X_{te}, Y_{te}\right)$ to 
evaluate the error. Having disjoint 
sets allows to have a fair way to test the algorithm. 

For the process we also need an error function that returns the error of the surrogate $s$ evaluated on a generic set of points $X:=\{x_i\}_i\subset\Omega$ 
w.r.t. the exact values $Y:=\{y_i\}_i$. We denote as $|X|$ the number of elements of $X$. Common examples are the maximal error and the Root Mean Square Error (RMSE) 
defined as
\begin{align}\label{eq:validation_error_fun}
E\left(s, X, Y\right) := \max_{1\leq i\leq |X|} \|s(x_i) - y_i\|_2\;\text{ or }\;E\left(s, X, Y\right) := \sqrt{\frac1{|X|} \sum_{i=1}^{|X|} \left\|s(x_i) - 
y_i\right\|_2^2}.
\end{align}

Then one chooses a set of possible parameter instantiations $\{p_1, \dots, p_{n_{p}}\}$, $n_p\in\N$ that has to be checked. A common choice for positive numerical 
parameters is 
to take them 
logarithmically equally spaced, since the correct scale is not known in advance, in general.

The training and validation process is described in Algorithm \ref{alg:validation}, where we denote as $s(p_i)$ the surrogate obtained with parameter 
$p_i$. It works as an outer loop with respect to the training of any of the surrogates that we have considered, and it has thus to be understood as part of the offline 
phase.

\begin{algorithm}[h!t]
  \caption{Model selection by validation}
  \begin{algorithmic}[1]
      \State{Input: $X_{{tr}}, X_{{val}}, X_{{te}}$,$Y_{{tr}}, Y_{{val}}, Y_{{te}}$, $\{p_1, \dots, p_{n_{p}}\}$}
        \For{$i = 1, \dots, n_{p}$}
	\State{Train surrogate $s(p_i)$ with data $(X_{tr}, Y_{tr})$}
	\State{Compute validation error $e_i := E(s(p_i), X_{val}, Y_{val})$}
        \EndFor
     \State{Choose parameter $\bar p := p_i$ with $i:= \argmin e_i$}    
     \State{Train surrogate $s(\bar p)$ with data $(X_{tr}\cup X_{val}, Y_{tr}\cup Y_{val})$}
     \State{Compute test error $\bar E = E(s(\bar p), X_{te}, Y_{te})$}
     \State{Output: surrogate $s(\bar p)$, optimal parameter $\bar p$, test error $\bar E$}
    \end{algorithmic}\label{alg:validation}
\end{algorithm}

A more advanced way to realize the same idea is via $k$-fold cross validation. To have an even better selection of the parameters, one can repeat the validation step 
(lines 2-6 in the previous algorithm) by changing the validation set at each step. To do so we do not select a validation set (so  $n = n_{tr}+n_{te}$), 
and instead consider a partition of $X_{tr}, Y_{tr}$ into a fixed number $k\in\{1,\dots, n_{tr}\}$ of disjoint subsets, all approximately of the same size, i.e., 
\begin{align*}
X_{tr} &:= \{x_i, 1\leq i \leq n_{tr}\} := \cup_{i=1}^k X_{i}\\
X_{te} &:= \{x_i, {n_{tr} + 1}\leq i\leq n\},
\end{align*}
and similarly for $Y_{tr}:= \cup_{i=1}^k Y_{i}$ and for $Y_{te}$. In the validation step each of the the $X_i$ is used as a validation set, and the validation is 
repeated for 
all $i=1,\dots, k$. 
In this case the error $e_i$ for the parameter $p_i$ is defined as the average error over all these permutations, as described in Algorithm \ref{alg:k_validation}.

\begin{algorithm}[h!t]
  \caption{Model selection by $k$-fold cross validation}
  \begin{algorithmic}[1]
      \State{Input: $X_{tr} = \cup_{i=1}^k X_{i}, X_{{te}}$, $Y_{tr} = \cup_{i=1}^k Y_{i}, Y_{{te}}$, $\{p_1, \dots, p_{n_{p}}\}$}
        \For{$i = 1, \dots, n_{p}$}
	 \For{$j=1, \dots, k$}
	\State{Train surrogate $s(p_i)$ with data $\left(\cup_{\ell\neq j} X_{\ell}, \cup_{\ell\neq j} Y_{\ell}\right)$}
	\State{Compute error $e^{(j)} := E(s(p_i), X_{j}, Y_{j})$}
	\EndFor
	\State{$e_i := \text{mean}\{e^{(j)}, 1\leq j\leq k\}$}
     \EndFor
     \State{Choose parameter $\bar p := p_i$ with $i:= \argmin e_i$}    
     \State{Train surrogate $s(\bar p)$ with data $(X_{tr}, Y_{tr})$}
     \State{Compute test error $\bar E = E(s(\bar p), X_{te}, Y_{te})$}
     \State{Output: surrogate $s(\bar p)$, optimal parameter $\bar p$, test error $\bar E$}
    \end{algorithmic}\label{alg:k_validation}
\end{algorithm}

We remark that, in the extreme case $k=N$, this $k$-fold cross validation is usually called Leave One Out Cross Validation (LOOCV).

\section{Numerical examples}\label{sec:numerics}
For the testing and illustration of the two methods of Section \ref{sec:interp} and Section \ref{sec:svr}, we consider a real-world application dataset describing the 
biomechanical modeling of the human spine introduced and studied in \cite{Wirtz2015a}. We refer to that paper for further details and we just give a 
brief description in the following.

The input-output function $f:\R^3\to\R^3$ represents the coupling between a global multibody system (MBS) and a Finite Elements (FEM) submodel. The human 
spine is represented as a MBS consisting of the vertebra, which are coupled by the interaction through intervertebral disks (IVDs). The PDE 
representing the behavior of each IVD is approximated by a FEM discretization, and it has the input geometry parameters as boundary conditions, and computes the output 
mechanical response as a result of the simulation. In particular, the three inputs are two spatial displacements and an angular inclination of a vertebra, and 
the three outputs 
are the corresponding two force components and the momentum which are transfered to the next vertebra. 
The dataset is generated by running the full model for $n := 1370$ different input parameters $X_n$ and generating the corresponding set of outputs $Y_n$. 

The dataset, as described in Section \ref{sec:parameter_selection}, is first randomly permuted and then divided in training and test datasets $\left(X_{n_{tr}}, 
Y_{n_{tr}}\right)$, $\left(X_{n_{te}}, Y_{n_{te}}\right)$ with $n_{tr}:=1238$ and  $n_{te} = 132$, corresponding to roughly $90\%$ and $10\%$ of the data. We remark 
that the full model predicts a value $(0,0,0)^T$ for the input $(0,0,0)^T$ and this sample pair is present in the dataset. We thus manually include it in the training 
set independently of the permutation. The training and test sets can be seen in Figure \ref{fig:sample_of_dataset}.

\begin{figure}[h!]
\begin{center}
\begin{tabular}{cc}
\includegraphics[width=0.45\textwidth]{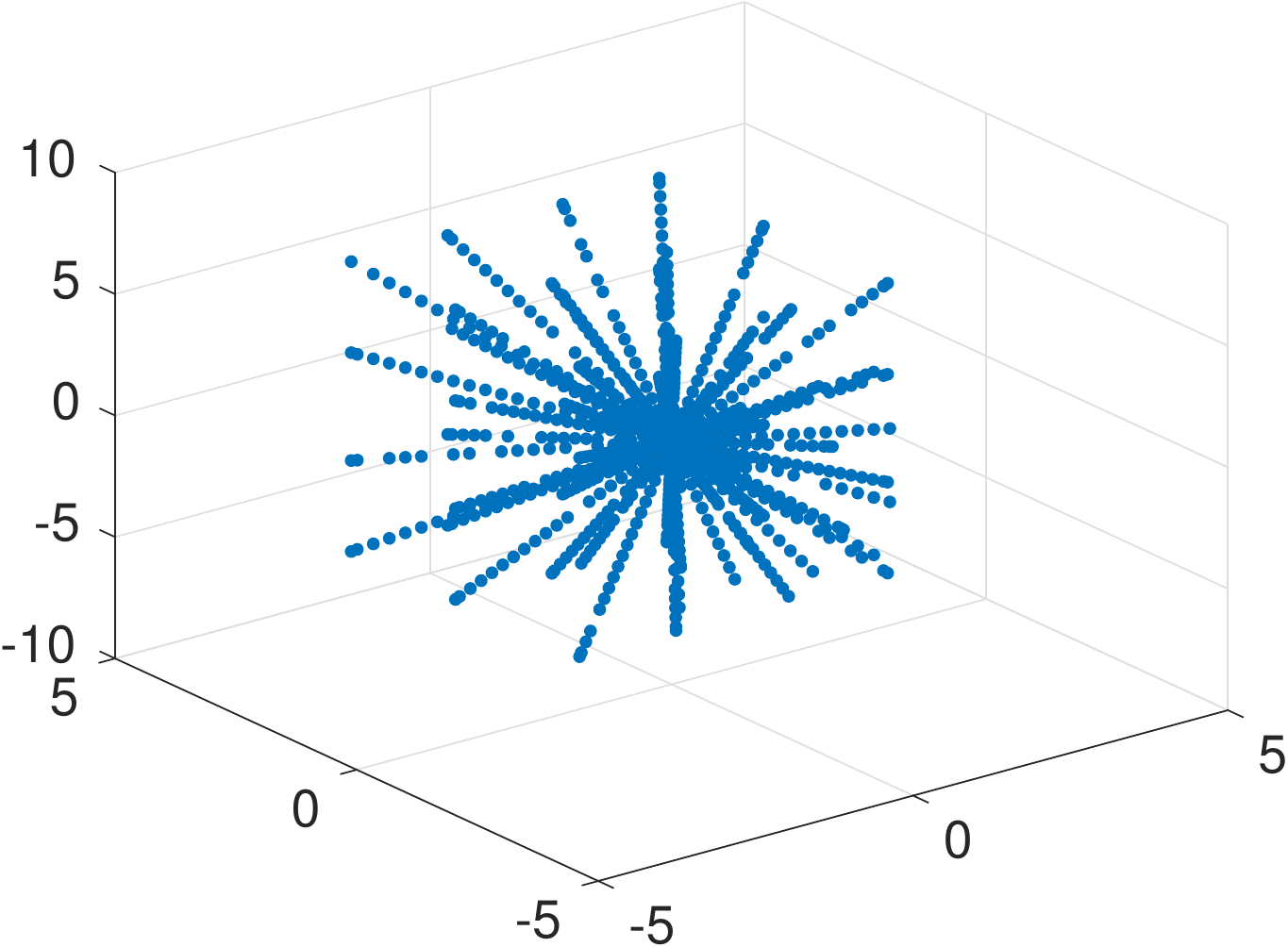} & \includegraphics[width=0.45\textwidth]{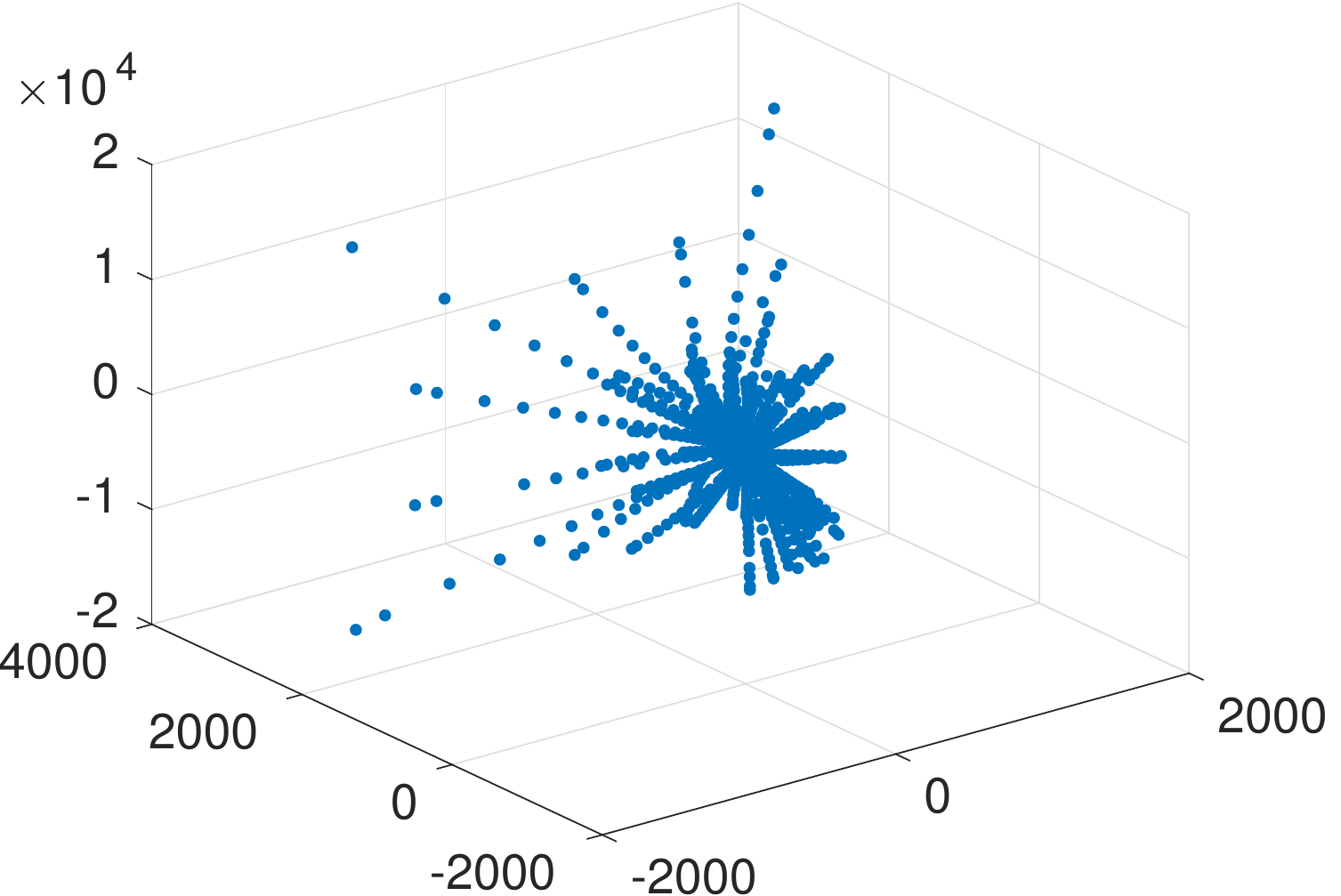}\\
\includegraphics[width=0.45\textwidth]{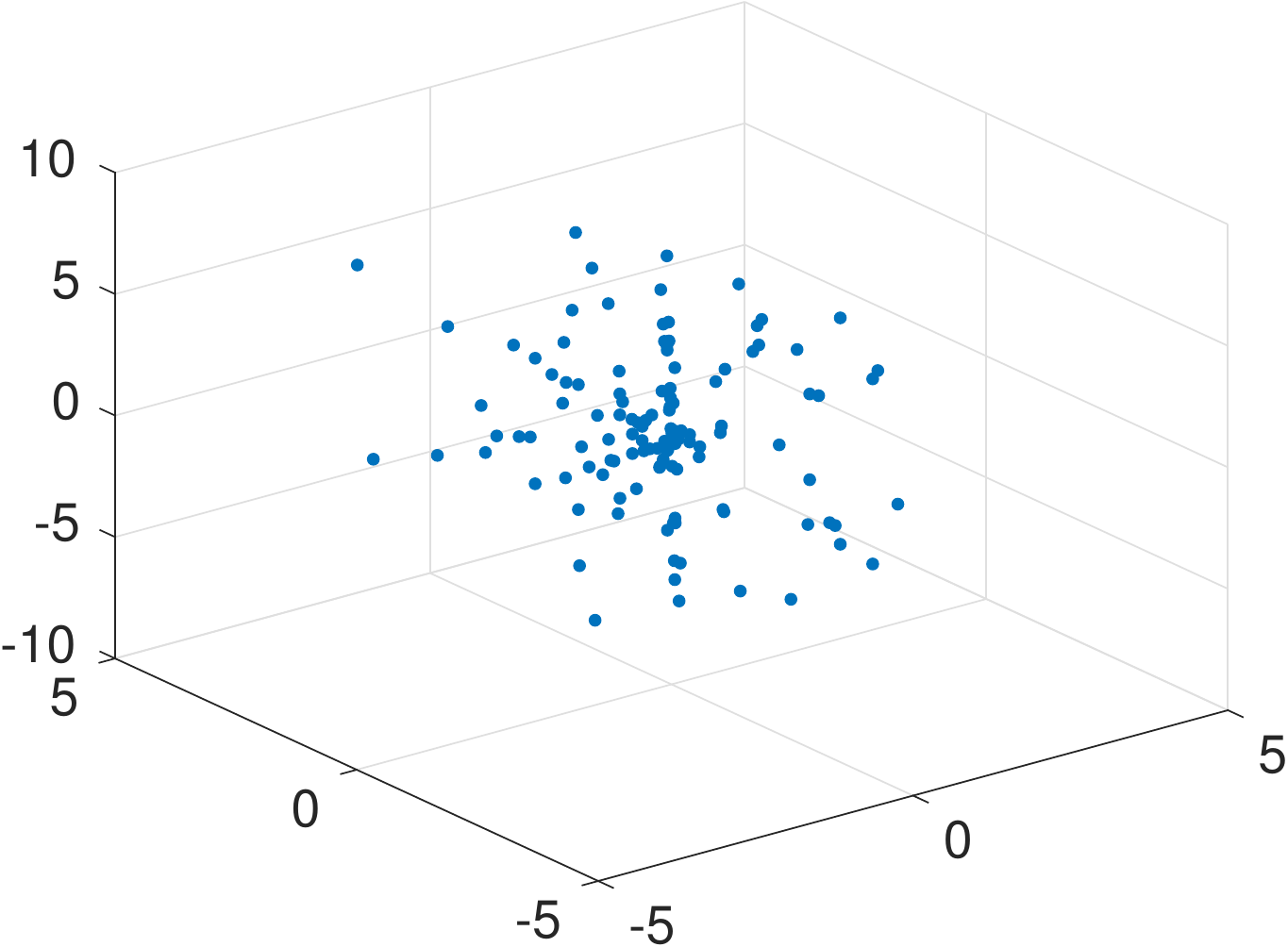} & \includegraphics[width=0.45\textwidth]{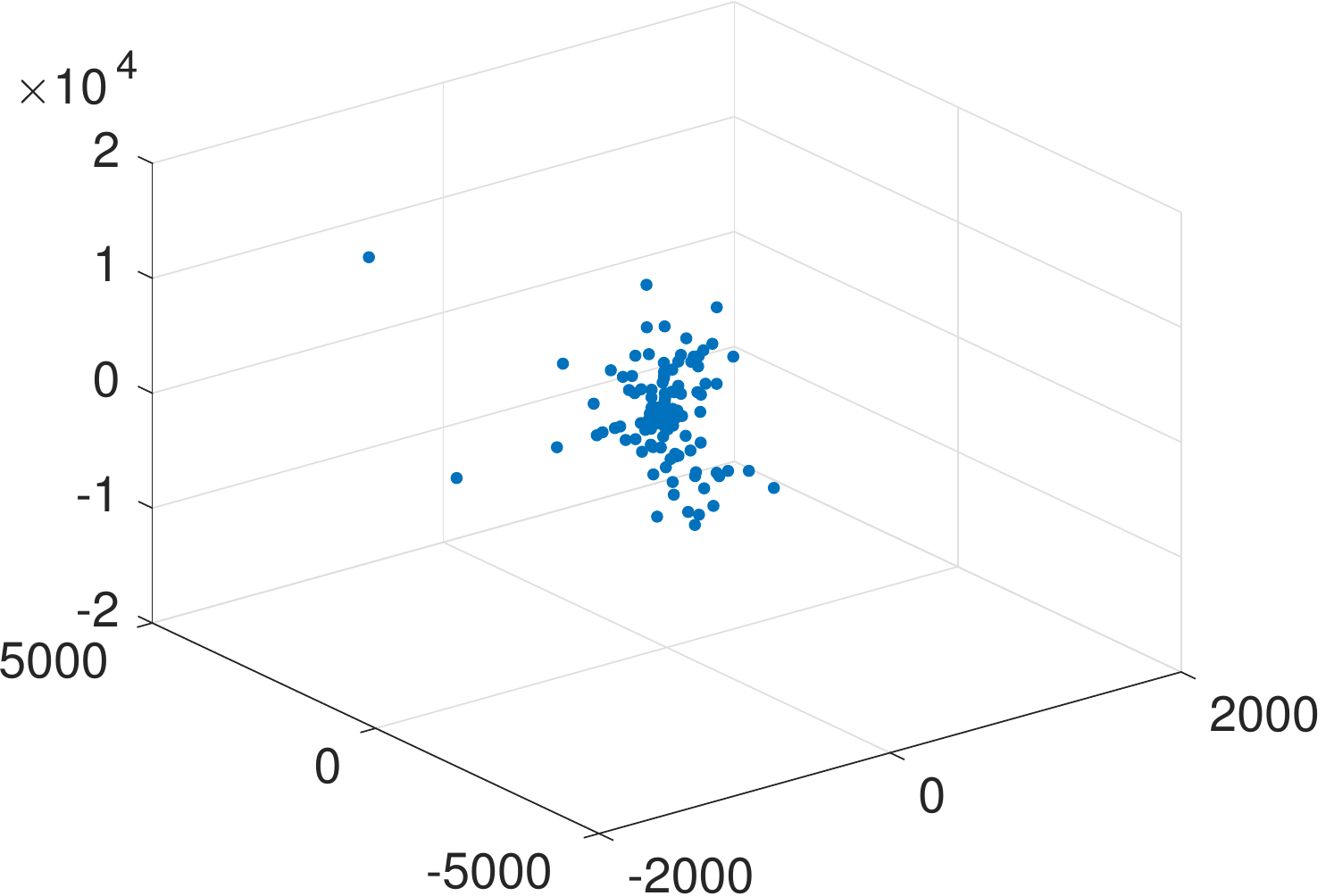}\\
\end{tabular}
\caption{Input parameters (left) and corresponding outputs (right) for the training (top row) and test set (bottom row).}\label{fig:sample_of_dataset}
\end{center}
\end{figure}

The models are trained using a Matlab implementation of the algorithms. For VKOGA we use an own 
implementation\footnote{\url{https://gitlab.mathematik.uni-stuttgart.de/pub/ians-anm/vkoga}}, while for SVR we employ the 
KerMor package\footnote{\url{https://www.morepas.org/software/kermor/index.html}}, which provides an implementation of the $2$-index SMO for the SVR without 
offset that is discussed in Section \ref{sec:smo}. We remark that this implementation requires the output data to be scaled in $[-1, 1]$, and thus we perform this 
scaling for the training and validation, while the testing is executed by scaling back the predictions to the original range. To 
have a fair comparison, we use the same data normalization also for the VKOGA models.

The regularized VKOGA (with $f$-, $P$-, and $f/P$-greedy selection rules) and the SVR models are trained with the Gaussian kernel. Both algorithms 
depend on the shape parameter $\gamma$ of the kernel and on the regularization parameter $\lambda$, while SVR additionally depends on the width $\varepsilon$ of the 
tube. These parameters are selected by $k$-fold cross validation as described in Section \ref{sec:parameter_selection}. The values of $k$ and of the parameter samples 
used for validation are reported in Table \ref{tab:val_par}, where each parameter set is obtained by generating logarithmically equally spaced samples in the 
given interval, i.e., $400$ parameter pairs are tested for VKOGA and $4000$ triples for SVR. As an error measure we use the $\max$ error in 
\eqref{eq:validation_error_fun}.
We remark that the SVR surrogate is obtained by training a separate model for each output, as described in Section 
\ref{sec:svr}, but only one cross validation is used. This means that for each parameter triple three models are trained, and then the parameter is evaluated in the 
prediction of the three dimensional output.

Moreover, the training of the VKOGA surrogates is terminated when the square of the power function is below the tolerance $\tau_P:=10^{-12}$, or when the training error 
is below the tolerance $\tau_f:=10^{-6}$. Additionally, it would be possible to use a maximal number of selected points as stopping criterion, and this offers the 
significant advantage of directly controlling the expansion size, which could be reduced to any given number (of course at the price of a reduced accuracy). In 
the case of SVR, instead, the number $N$ is a result of the tuning of the remaining parameters.

\begin{table}[h!]
\begin{center}
\begin{tabular}{|c|ccc|ccc|ccc|}
\hline
$k$& $\gamma_{\min}$&$\gamma_{\max}$&$n_{\gamma}$& $\lambda_{\min}$&$\lambda_{\max}$&$n_{\lambda}$&$\varepsilon_{\min}$&$\varepsilon_{\max}$&$n_{\varepsilon}$\\
\hline
$5$& $10^{-2}$& $10^{1}$& $20$&$10^{-16}$& $10^{3}$& $20$&$10^{-10}$& $10^{-3}$& $10$ \\
\hline
\end{tabular}
\end{center}
\caption{Parameters ranges and sample numbers used in the $k$-fold cross validation.}\label{tab:val_par}
\end{table}

In Table \ref{tab:train_results} we report the values of the parameters selected by the validation procedure for the four models, as well as the number $N$ of nonzero 
coefficients in the trained kernel expansions. Observe that for SVR the three values of $N$ refer to the number of support vectors for the three scalar-valued models. 
Moreover, the number of support vectors or kernel centers is only slightly larger for SVR than for the VKOGA models, but, as discussed in the following, the VKOGA 
models give prediction errors which are up to two orders of magnitude smaller than the ones of the SVR model. 
\begin{table}[h!]
\begin{center}
\begin{tabular}{|l|c|ccc|}
\hline
Method                     & $N$     & $\bar\gamma$         & $\bar\lambda$  & $\bar\varepsilon$\\
\hline
VKOGA $P$-greedy           & $1000$  & $4.9 \cdot 10^{-2}$  & $10^{-11}$     & --\\
VKOGA $f$-greedy           & $879$   & $4.3 \cdot 10^{-2}$  & $10^{-11}$     & --\\
VKOGA $f/P$-greedy         & $967$   & $6.2 \cdot 10^{-2}$  & $10^{-9}$      & --\\
SVR, output 1              & $359$  & $1.8 \cdot 10^{-1}$  & $10^{2}$       & $7.7\cdot10^{-7}$\\
\hspace{.7cm} output 2     & $378$  &                      &                &    \\
\hspace{.7cm} output 3     & $405$  &                      &                &    \\
\hline
\end{tabular}
\end{center}
\caption{Selected parameters and number of nonzero coefficients in the kernel expansions.}\label{tab:train_results}
\end{table}

We can now test the four models in the prediction on the test set. Table \ref{tab:test_results} contains various error measures between the prediction of the surrogates 
and the exact data. We report the values of the maximum error $E_{\max}$ and the RMSE $E_{RMSE}$ defined in \eqref{eq:validation_error_fun}, and the relative maximum 
error $E_{\max, rel}$ obtained by scaling each error by the norm of the exact output.  

\begin{table}[h!]
\begin{center}
\begin{tabular}{|l|ccc|}
\hline
Method                     & $E_{\max}$       & $E_{RMSE}$            & $E_{\max, rel}$ \\
\hline
VKOGA $P$-greedy           & $1.6\cdot 10^2$   & $22.3$               & $2.2 \cdot 10^{-1}$\\
VKOGA $f$-greedy           & $1.6\cdot 10^2$   & $22.4$               & $2.0 \cdot 10^{-1}$\\
VKOGA $f/P$-greedy         & $1.6\cdot 10^2$   & $23.2$               & $8.8 \cdot 10^{-1}$\\
SVR                        & $1.3\cdot 10^3$   & $1.6 \cdot 10^{2}$   & $1.4 \cdot 10^{1}$      \\
\hline
\end{tabular}
\end{center}
\caption{Test errors: maximum error $E_{\max}$, RMSE error $E_{RMSE}$, maximum relative error $E_{\max, rel}$}\label{tab:test_results}
\end{table}

To provide a better insight in the approximation quality of the methods, we show in Figure \ref{fig:errors} the distribution of the error over the test set. The plots 
show, for each sample $(x_i, y_i)$ in the test set, the absolute error $\norm{2}{y_i-s(x_i)}$ as a function of the magnitude $\norm{2}{y_i}$ of the output. Moreover, the 
black lines represent a relative error from $10^0$ to $10^{-3}$. It is clear that in all cases the maximum and RMS errors of Table \ref{tab:test_results} are dominated 
by the values obtained for outputs of large norm, where the VKOGA models obtain a much better accuracy than SVR. 
The relative errors, on the other hand, are not evenly distributed for SVR, where most of the test set is approximated with a relative error between $10^1$ 
and $10^{-2}$ except for the samples with small magnitude of the output. For these data, the model gives increasingly bad predictions as the magnitude is smaller, 
reaching a relative error much larger than $1$. 
The VKOGA models, instead, obtain a 
relative error smaller than $10^{-2}$ on the full test set except for the entries of small magnitude. For these 
samples, the $f$- and $P$-greedy versions of the algorithm perform almost the same and better than the $f/P$-greedy variant, thus giving an overall smaller 
relative error in Table \ref{tab:test_results}. Moreover, these results are obtained with a significantly smaller expansion size for $f$-greedy than for $P$-greedy. 
Indeed, even if the SVR surrogates for the individual output components are smaller than the VKOGA ones, the overall number of non-zero coefficients is $359+378+405 = 
1142$, i.e., more than the one of each of the three VKOGA models, thus leading to a less accurate and more expensive 
surrogate.  

\begin{figure}[h!]
\begin{center}
\begin{tabular}{cc}
\includegraphics[width=0.45\textwidth]{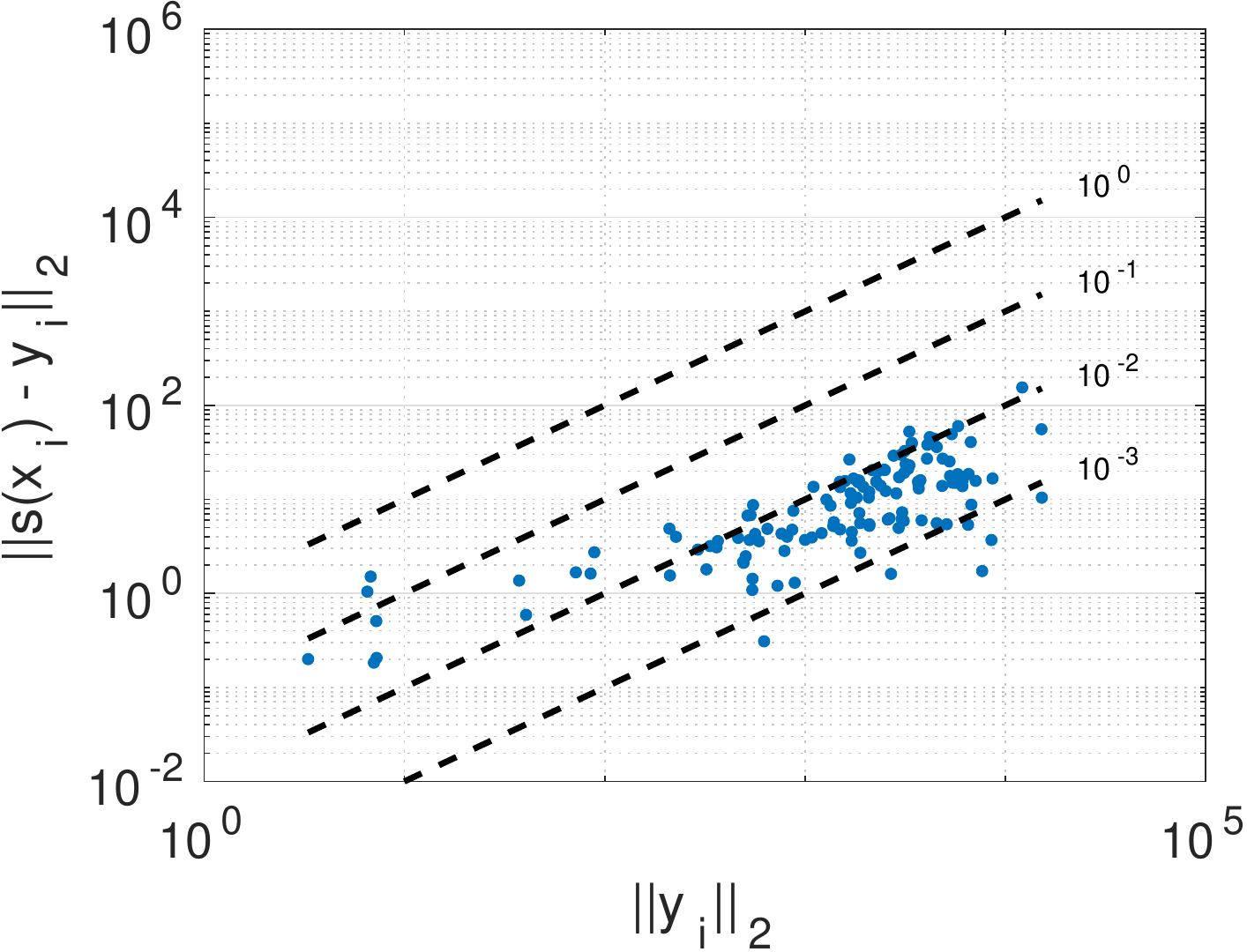} & 
\includegraphics[width=0.45\textwidth]{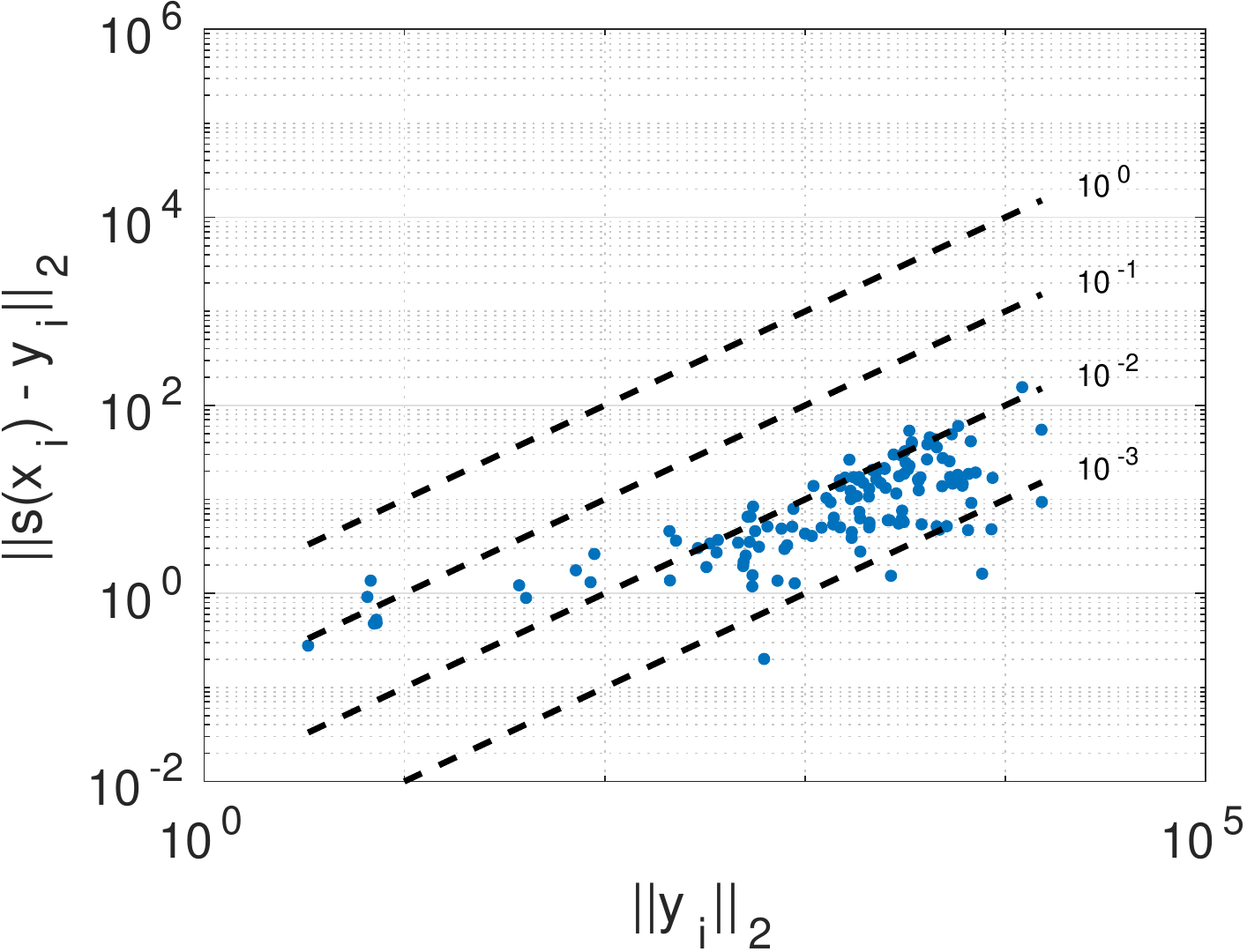}\\
\includegraphics[width=0.45\textwidth]{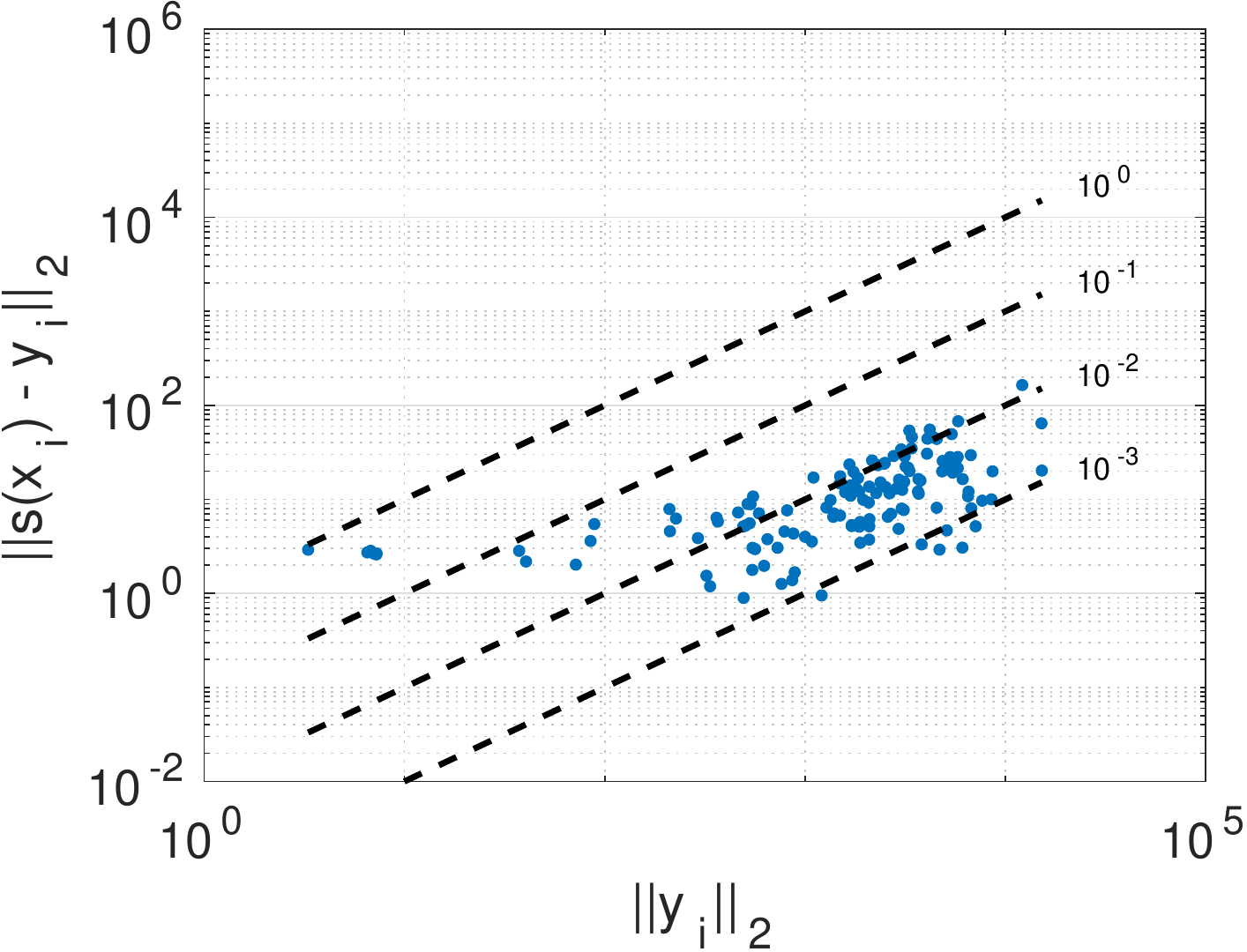} & 
\includegraphics[width=0.45\textwidth]{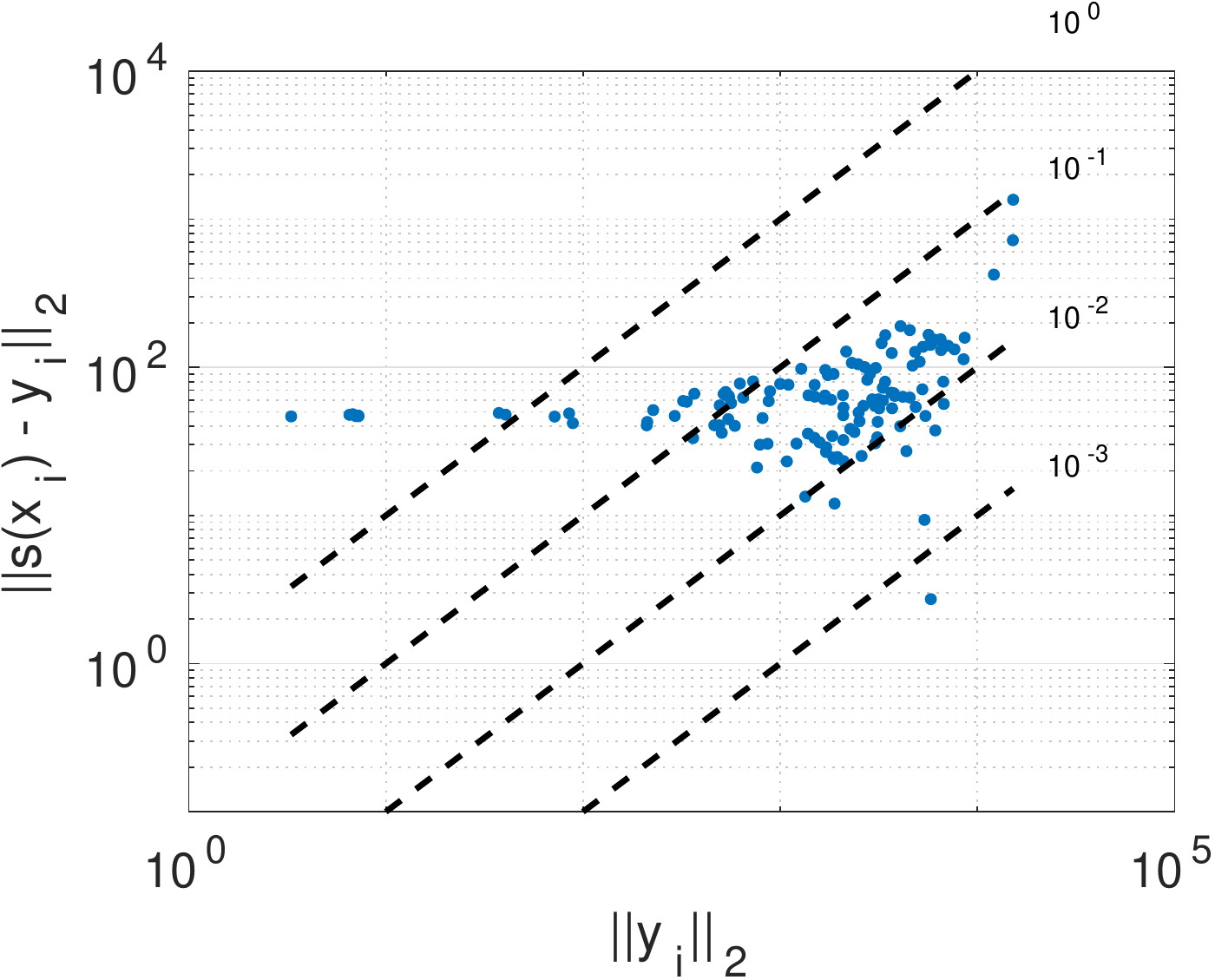}\\
\end{tabular}
\caption{Absolute errors as functions of the magnitude of the output, and relative error levels from $10^0$ to $10^{-3}$ for the 
surrogates obtained with $P$-greedy VKOGA (top left), $f$-greedy VKOGA (top right), $f/P$-greedy VKOGA (bottom left) and SVR (bottom right).}\label{fig:errors}
\end{center}
\end{figure}

Regarding the runtime requirements, we can now estimate both the offline (training) and online (prediction) times.
The offline time required for the validation and training of the models is essentially determined by the number of parameters tested in the $k$-fold cross validation, 
while the training time of a single model is almost negligible. As a comparison, we report in Table \ref{tab:offline_runtime} the average runtime $\tilde T_{offline}$ 
for $10$ runs of the 
training of the models for the fixed set of parameters of Table \ref{tab:train_results}. All the reported times are in the ranges of seconds (for VKOGA) and below one 
minute (for SVR). We remark that this timing is only a very rough indication and not a precise comparison, since the times highly depends on the number of selected 
points (for VKOGA) and the number of support vectors for SVR, and both are dependent on the used parameters. For example, we repeated the experiment for SVR 
with the same parameter set but with $\varepsilon=10^{-1}$. In this case this value of 
$\varepsilon$ is overly large (if compared to the the one selected by cross validation) and it likely produces a useless model, but nevertheless we obtain an 
average training time of $0.03$ sec. 

A more interesting comparison is the online time, which directly determines the efficiency of the surrogate models in the replacement of the full simulation. In this 
case, we evaluate the models $5000$ times on the full test set consisting of $n_{te} = 132$ samples, and we report the average online time $\tilde T_{online}$ per single 
test sample in Table \ref{tab:offline_runtime}. The table contains also again the number $N$ of elements of the corresponding kernel expansions, and it is evident that a 
smaller value leads to a faster evaluation of the model.

In the original paper \cite{Wirtz2015a}, it has been estimated that a $30$ sec full simulation with $24$ IVDs with a timestep $\Delta t = 10^{-3}$ sec requires 
$7.2 \cdot 10^5$ evaluations of the coupling function $f$, and these were estimated to require $600$ h. This corresponds to an average of 
$\tilde T_{full}=3$ sec per evaluation of $f$, giving a speedup $\tilde T_{full}/ \tilde T_{online}$ as reported in Table \ref{tab:offline_runtime}.

\begin{table}[h!]
\begin{center}
\begin{tabular}{|c|c|ccc|}
\hline
Method            & $N$      & $\tilde T_{offline}$  & $\tilde T_{online}$     & $\tilde T_{full}/ \tilde T_{online}$ \\
\hline
VKOGA $P$-greedy  & $1000$   &$1.67$ sec             & $9.97\cdot 10^{-6}$ sec & $3.01 \cdot 10^{5}$\\
VKOGA $f$-greedy  & $879$    &$1.41$ sec             & $9.44\cdot 10^{-6}$ sec & $3.18 \cdot 10^{5}$\\
VKOGA $f/P$-greedy& $967$    &$1.66$ sec             & $9.92\cdot 10^{-6}$ sec & $3.02 \cdot 10^{5}$\\
SVR   ($3$ models)& $1142$   &$52.0$ sec             & $2.28\cdot 10^{-5}$ sec & $1.32 \cdot 10^{5}$\\
\hline
\end{tabular}
\end{center}                    
\caption{Average offline time (training only), online time, and projected speedup factor for the four different models.}\label{tab:offline_runtime}
\end{table}

These surrogates can now be employed to solve different tasks that require multiple evaluations of $f$. As an example, we employ the $f$-greedy model (as the most 
accurate and most efficient) to solve a parameter estimation problem as described in Section \ref{sec:model_analysis}. 
We consider the output values $Y_{n_{te}}$ in the test set as a set of measures that have not been used in the training of the model, and we try to 
estimate the values of $X_{n_{te}}$. For each output vector $y_i\in\R^3$ we define a target value $\overline y:= y_i + \eta \norm{2}{y_i} v$ to define the cost 
\eqref{eq:cost_state_estim}, where $v\in\R^3$ is a uniform random vector representing some noise, and $\eta\in[0,1]$ is a noise level. We then use a built-in Matlab 
optimizer with the gradient of Proposition \ref{prop:gradient_of_the_cost}, with initial guess $x_0:=0\in\R^3$, to obtain an estimate $x_i^*$ of $x_i$. The results of 
the estimate for each output value in the test set are depicted in Figure \ref{fig:par_est} for $\eta = 0, 0.1$, where we report also the final value of the cost 
function $C(x_i^*)$. In all cases, the optimizer seems to converge, since the value of the cost function is in all cases smaller than $10^{-4}$, which represents a 
relative value smaller than $10^{-3}$ with respect to the magnitude of the input values.
The maximum absolute error in the estimations is quite uniform for all the samples in the test set, and this results in a good relative error of 
about $10^{-1}$ for large inputs, while for inputs of very small magnitude the relative error is larger than $1$, and a larger noise level leads to less accurate 
predictions. This behavior is coherent with the analysis of the test error discussed above, since the approximant is less accurate on inputs of small magnitude, and thus 
it provides a less reliable surrogate in the cost function.

\begin{figure}[h!]
\begin{center}
\begin{tabular}{ll}
\includegraphics[width=0.45\textwidth]{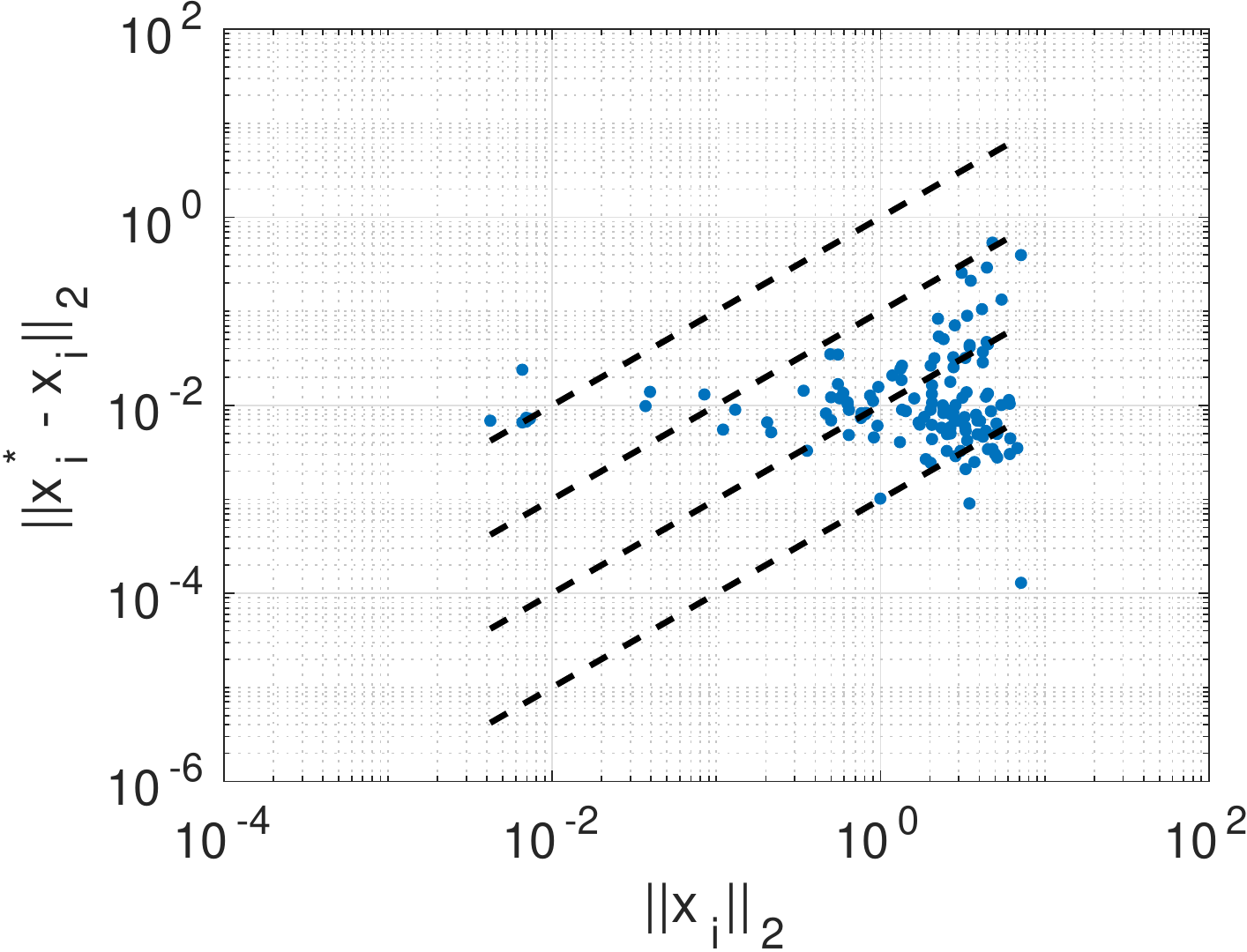} & 
\includegraphics[width=0.45\textwidth]{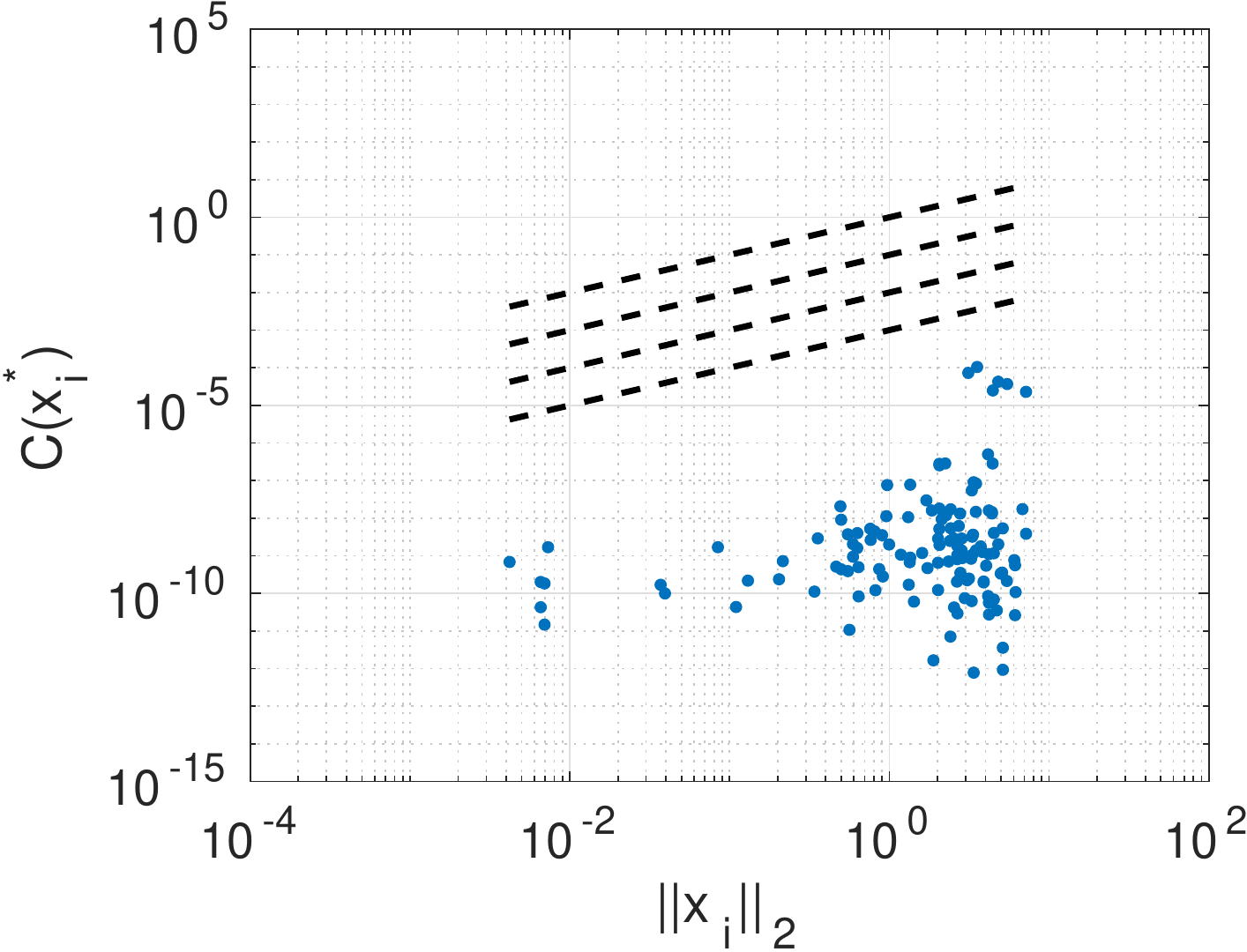}\\
\includegraphics[width=0.45\textwidth]{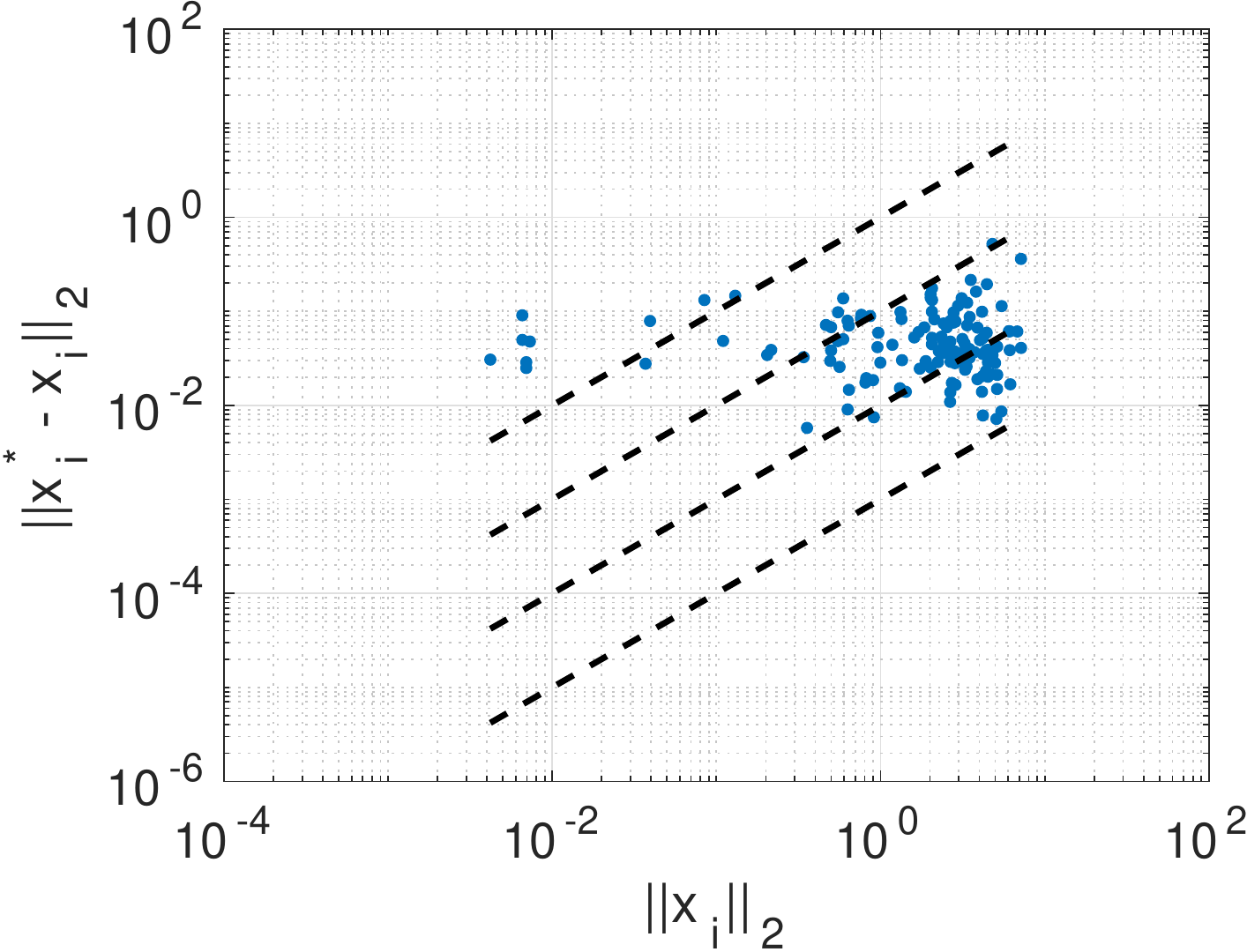}& 
\includegraphics[width=0.45\textwidth]{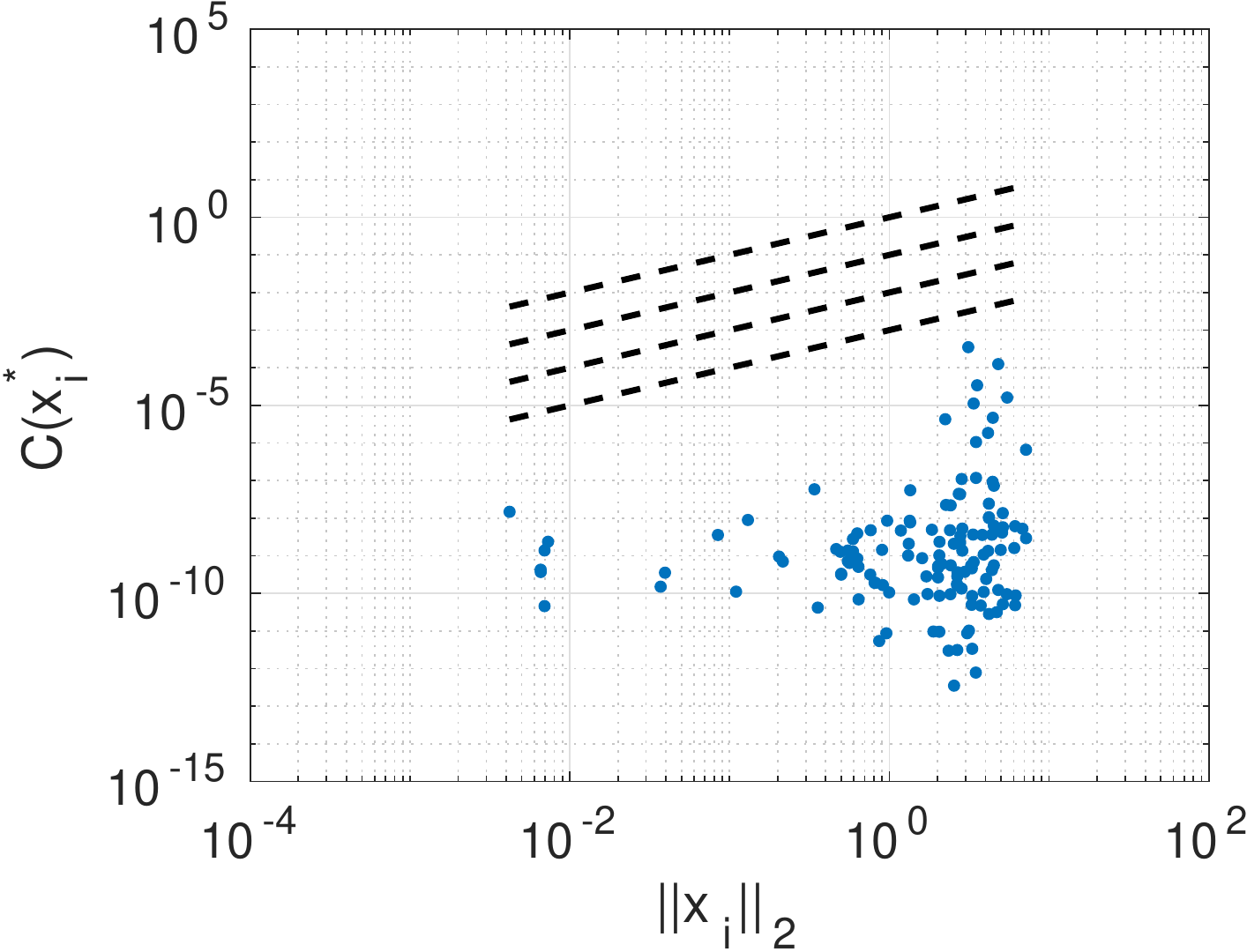}\\
\end{tabular}
\caption{Absolute errors of the input estimation as functions of the magnitude of the output (left), and value of the cost function at the estimated input (right) for a 
noise level $\eta = 0$ (top row) and $\eta = 0.1$ (bottom row) using the $f$-greedy VKOGA model. The dotted lines represent relative error levels from $10^0$ to 
$10^{-3}$.}\label{fig:par_est}
\end{center}
\end{figure}

\section{Summary and perspectives}
In this chapter we discussed the use of kernel methods to construct surrogate models based on scattered data samples. These methods can be applied to data with 
general structure, and they scale well with the dimension of the input and output values. In particular, we analyzed issues and methods to obtain sparse solutions, 
which are then extremely fast to evaluate, while still being very accurate. These properties have been further demonstrated on numerical tests on a real application 
dataset.
These methods can be analyzed in the common framework of Reproducing Kernel Hilbert Spaces, which provides solid theoretical foundations and a high 
flexibility to derive new algorithms.

The integration of machine learning and model reduction is promising and many interesting aspects have still to be investigated. For example, surrogate models 
have been used in \cite{Guo2018,Guo2019} to learn a representation with respect to projection-based methods, and generally a more extensive application of machine 
learning to 
dynamical systems requires additional understanding and the derivation of new techniques. Moreover, the field of data-based numerics is very promising, where 
classical 
numerical methods are integrated or accelerated with data-based models.

\bibliographystyle{abbrv}

\begin{thebibliography}{10}

\bibitem{Alvarez2012}
M.~Alvarez, L.~Rosasco, and N.~D. Lawrence.
\newblock Kernels for vector-valued functions: a review.
\newblock {\em Foundations and Trends in Machine Learning}, 4(3):195--266,
  2012.

\bibitem{Aronszajn1950}
N.~Aronszajn.
\newblock Theory of reproducing kernels.
\newblock {\em Transactions of the American Mathematical Society}, 68:337--404,
  1950.

\bibitem{BECKERT2001125}
A.~Beckert and H.~Wendland.
\newblock Multivariate interpolation for fluid-structure-interaction problems
  using radial basis functions.
\newblock {\em Aerospace Science and Technology}, 5(2):125--134, 2001.

\bibitem{Boser1992}
B.~E. Boser, I.~M. Guyon, and V.~N. Vapnik.
\newblock A training algorithm for optimal margin classifiers.
\newblock In {\em Proceedings of the Fifth Annual Workshop on Computational
  Learning Theory}, COLT '92, pages 144--152, New York, NY, USA, 1992. ACM.

\bibitem{Bouvrie2017}
J.~Bouvrie and B.~Hamzi.
\newblock Kernel methods for the approximation of nonlinear systems.
\newblock {\em SIAM Journal on Control and Optimization}, 55(4):2460--2492,
  2017.

\bibitem{Bruennette2019}
T.~Br{\"u}nnette, G.~Santin, and B.~Haasdonk.
\newblock Greedy kernel methods for accelerating implicit integrators for
  parametric {ODE}s.
\newblock In F.~A. Radu, K.~Kumar, I.~Berre, J.~M. Nordbotten, and I.~S. Pop,
  editors, {\em Numerical Mathematics and Advanced Applications - ENUMATH
  2017}, pages 889--896, Cham, 2019. Springer International Publishing.

\bibitem{Cavoretto2016h}
R.~Cavoretto, A.~De~Rossi, and E.~Perracchione.
\newblock Efficient computation of partition of unity interpolants through a
  block-based searching technique.
\newblock {\em Comput. Math. Appl.}, 71(12):2568--2584, June 2016.

\bibitem{CaFaMcC2014}
R.~Cavoretto, G.~Fasshauer, and M.~McCourt.
\newblock An introduction to the {H}ilbert-{S}chmidt {SVD} using iterated
  {B}rownian bridge kernels.
\newblock {\em Numerical Algorithms}, 68(2):1--30, Feb 2014.

\bibitem{libsvm}
C.-C. Chang and C.-J. Lin.
\newblock {LIBSVM}: A library for support vector machines.
\newblock {\em ACM Transactions on Intelligent Systems and Technology},
  2:27:1--27:27, 2011.
\newblock Software available at \url{http://www.csie.ntu.edu.tw/~cjlin/libsvm}.

\bibitem{Chen1998}
S.~Chen, D.~Donoho, and M.~Saunders.
\newblock Atomic decomposition by basis pursuit.
\newblock {\em SIAM Journal on Scientific Computing}, 20(1):33--61, 1998.

\bibitem{Chen2014}
W.~Chen, Z.-J. Fu, and C.-S. Chen.
\newblock {\em Recent advances in radial basis function collocation methods}.
\newblock Springer, 2014.

\bibitem{Deparis2014}
S.~Deparis, D.~Forti, and A.~Quarteroni.
\newblock A rescaled localized {R}adial {B}asis {F}unction interpolation on
  non-{C}artesian and nonconforming grids.
\newblock {\em SIAM Journal on Scientific Computing}, 36(6):A2745--A2762, 2014.

\bibitem{Deparis2016}
S.~Deparis, D.~Forti, and A.~Quarteroni.
\newblock {\em A Fluid--Structure Interaction Algorithm Using Radial Basis
  Function Interpolation Between Non-Conforming Interfaces}, pages 439--450.
\newblock Springer International Publishing, Cham, 2016.

\bibitem{Drohmann2015}
M.~Drohmann and K.~Carlberg.
\newblock The {ROMES} method for statistical modeling of
  {R}educed-{O}rder-{M}odel error.
\newblock {\em SIAM/ASA Journal on Uncertainty Quantification}, 3(1):116--145,
  2015.

\bibitem{Fasshauer2015}
G.~E. Fasshauer and M.~McCourt.
\newblock {\em Kernel-{B}ased {A}pproximation {M}ethods Using {MATLAB}},
  volume~19 of {\em Interdisciplinary Mathematical Sciences}.
\newblock World Scientific Publishing Co. Pte. Ltd., Hackensack, {NJ}, 2015.

\bibitem{Fasshauer2012b}
G.~E. Fasshauer and M.~J. McCourt.
\newblock Stable evaluation of {G}aussian radial basis function interpolants.
\newblock {\em SIAM J. Sci. Comput.}, 34(2):A737--A762, 2012.

\bibitem{FornbergFlyer2015}
B.~Fornberg and N.~Flyer.
\newblock {\em A primer on radial basis functions with applications to the
  geosciences}.
\newblock SIAM, 2015.

\bibitem{Fornberg2011}
B.~Fornberg, E.~Larsson, and N.~Flyer.
\newblock Stable computations with {G}aussian radial basis functions.
\newblock {\em SIAM J. Sci. Comput.}, 33(2):869--892, 2011.

\bibitem{Garcke2012}
J.~Garcke and M.~Griebel.
\newblock {\em Sparse grids and applications}, volume~88.
\newblock Springer Science \& Business Media, 2012.

\bibitem{Gaertner2003}
T.~G{\"a}rtner, J.~W. Lloyd, and P.~A. Flach.
\newblock Kernels for structured data.
\newblock In S.~Matwin and C.~Sammut, editors, {\em Inductive Logic
  Programming}, pages 66--83, Berlin, Heidelberg, 2003. Springer Berlin
  Heidelberg.

\bibitem{Goodfellow2016}
I.~Goodfellow, Y.~Bengio, and A.~Courville.
\newblock {\em Deep Learning}.
\newblock MIT Press, 2016.
\newblock \url{http://www.deeplearningbook.org}.

\bibitem{Grundel2013}
S.~Grundel, N.~Hornung, B.~Klaassen, P.~Benner, and T.~Clees.
\newblock {\em Computing Surrogates for Gas Network Simulation Using Model
  Order Reduction}, pages 189--212.
\newblock Springer New York, New York, NY, 2013.

\bibitem{Guo2018}
M.~Guo and J.~S. Hesthaven.
\newblock Reduced order modeling for nonlinear structural analysis using
  {G}aussian process regression.
\newblock {\em Computer Methods in Applied Mechanics and Engineering},
  341:807--826, 2018.

\bibitem{Guo2019}
M.~Guo and J.~S. Hesthaven.
\newblock Data-driven reduced order modeling for time-dependent problems.
\newblock {\em Computer Methods in Applied Mechanics and Engineering},
  345:75--99, 2019.

\bibitem{SH2017b}
B.~Haasdonk and G.~Santin.
\newblock Greedy kernel approximation for sparse surrogate modeling.
\newblock In W.~Keiper, A.~Milde, and S.~Volkwein, editors, {\em Reduced-Order
  Modeling (ROM) for Simulation and Optimization: Powerful Algorithms as Key
  Enablers for Scientific Computing}, pages 21--45. Springer International
  Publishing, Cham, 2018.

\bibitem{Haussler1999}
D.~Haussler.
\newblock Convolution kernels on discrete structures, 1999.

\bibitem{Wahba1970}
G.~S. Kimeldorf and G.~Wahba.
\newblock A correspondence between {B}ayesian estimation on stochastic
  processes and smoothing by splines.
\newblock {\em Ann. Math. Statist.}, 41(2):495--502, 04 1970.

\bibitem{Koeppel2018}
M.~K{\"o}ppel, F.~Franzelin, I.~Kr{\"o}ker, S.~Oladyshkin, G.~Santin,
  D.~Wittwar, A.~Barth, B.~Haasdonk, W.~Nowak, D.~Pfl{\"u}ger, and C.~Rohde.
\newblock Comparison of data-driven uncertainty quantification methods for a
  carbon dioxide storage benchmark scenario.
\newblock {\em Computational Geosciences}, 23(2):339--354, Apr 2019.

\bibitem{KSHH2017}
T.~K{\"o}ppl, G.~Santin, B.~Haasdonk, and R.~Helmig.
\newblock Numerical modelling of a peripheral arterial stenosis using
  dimensionally reduced models and kernel methods.
\newblock {\em International Journal for Numerical Methods in Biomedical
  Engineering}, 34(8):e3095, 2018.
\newblock e3095 cnm.3095.

\bibitem{Kowalewski2016}
M.~Kowalewski, E.~Larsson, and A.~Heryudono.
\newblock An adaptive interpolation scheme for molecular potential energy
  surfaces.
\newblock {\em The Journal of Chemical Physics}, 145(8):084104, 2016.

\bibitem{Larsson2013a}
E.~Larsson, E.~Lehto, A.~Heryudono, and B.~Fornberg.
\newblock Stable computation of differentiation matrices and scattered node
  stencils based on {G}aussian radial basis functions.
\newblock {\em SIAM Journal on Scientific Computing}, 35(4):A2096--A2119, 2013.

\bibitem{MN14}
A.~Manzoni and F.~Negri.
\newblock Heuristic strategies for the approximation of stability factors in
  quadratically nonlinear parametrized {PDEs}.
\newblock {\em Advances in Computational Mathematics}, 41(5):1255--1288, 2015.

\bibitem{MARCHANDISE20122376}
E.~Marchandise, C.~Piret, and J.-F. Remacle.
\newblock {CAD} and mesh repair with {R}adial {B}asis {F}unctions.
\newblock {\em Journal of Computational Physics}, 231(5):2376--2387, 2012.

\bibitem{ImmanuelThesis}
I.~Martini.
\newblock {\em Reduced Basis Approximation for Heterogeneous Domain
  Decomposition Problems}.
\newblock PhD thesis, IANS, University of Stuttgart, 2017.

\bibitem{Micchelli2005}
C.~A. Micchelli and M.~Pontil.
\newblock On learning vector-valued functions.
\newblock {\em Neural Comput.}, 17(1):177--204, 2005.

\bibitem{Mueller2009}
S.~M{\"u}ller.
\newblock {\em Komplexit{\"a}t und Stabilit{\"a}t von kernbasierten
  Rekonstruktionsmethoden (Complexity and Stability of Kernel-based
  Reconstructions)}.
\newblock PhD thesis, Fakult{\"a}t f{\"u}r Mathematik und Informatik,
  Georg-August-Universit{\"a}t G{\"o}ttingen, 2009.

\bibitem{Muller2009}
S.~M{\"u}ller and R.~Schaback.
\newblock A {N}ewton basis for kernel spaces.
\newblock {\em J. Approx. Theory}, 161(2):645--655, 2009.

\bibitem{Olea2012}
R.~A. Olea.
\newblock {\em Geostatistics for engineers and earth scientists}.
\newblock Springer Science \& Business Media, 2012.

\bibitem{Pazouki2011}
M.~Pazouki and R.~Schaback.
\newblock Bases for kernel-based spaces.
\newblock {\em J. Comput. Appl. Math.}, 236(4):575--588, 2011.

\bibitem{Peherstorfer2018a}
B.~Peherstorfer and Y.~Marzouk.
\newblock A transport-based multifidelity preconditioner for {M}arkov chain
  {M}onte {C}arlo.
\newblock ArXiv 1808.09379, 2018.

\bibitem{Platt1998}
J.~Platt.
\newblock Sequential minimal optimization: A fast algorithm for training
  support vector machines.
\newblock Technical report, April 1998.

\bibitem{Rasmussen2006}
C.~E. Rasmussen and C.~K.~I. Williams.
\newblock {\em Gaussian Processes for Machine Learning}.
\newblock The MIT Press, 2006.

\bibitem{Rieger2008b}
C.~Rieger and B.~Zwicknagl.
\newblock Sampling inequalities for infinitely smooth functions, with
  applications to interpolation and machine learning.
\newblock {\em Adv. Comput. Math.}, 32(1):103, 2008.

\bibitem{Rieger2009b}
C.~Rieger and B.~Zwicknagl.
\newblock Deterministic error analysis of support vector regression and related
  regularized kernel methods.
\newblock {\em J. Mach. Learn. Res.}, 10:2115--2132, Dec. 2009.

\bibitem{Saitoh2016}
S.~Saitoh and Y.~Sawano.
\newblock {\em Theory of Reproducing Kernels and Applications}.
\newblock Developments in Mathematics; 44. Springer, Singapore, 2016.

\bibitem{SH16b}
G.~Santin and B.~Haasdonk.
\newblock Convergence rate of the data-independent {P}-greedy algorithm in
  kernel-based approximation.
\newblock {\em Dolomites Res. Notes Approx.}, 10:68--78, 2017.

\bibitem{SaWiHa2018}
G.~Santin, D.~Wittwar, and B.~Haasdonk.
\newblock Greedy regularized kernel interpolation.
\newblock ArXiv preprint 1807.09575, University of Stuttgart, 2018.

\bibitem{Schaback1995}
R.~Schaback.
\newblock Error estimates and condition numbers for radial basis function
  interpolation.
\newblock {\em Adv. Comput. Math.}, 3(3):251--264, 1995.

\bibitem{Schaback2002a}
R.~Schaback and H.~Wendland.
\newblock Approximation by positive definite kernels.
\newblock In M.~Buhmann and D.~Mache, editors, {\em Advanced Problems in
  Constructive Approximation}, volume 142 of {\em International Series in
  Numerical Mathematics}, pages 203--221, 2002.

\bibitem{scheuerer2013}
M.~Scheuerer, R.~Schaback, and M.~Schlather.
\newblock Interpolation of spatial data -- a stochastic or a deterministic
  problem?
\newblock {\em European Journal of Applied Mathematics}, 24(4):601–629, 2013.

\bibitem{Schmidt2018f}
A.~Schmidt and B.~Haasdonk.
\newblock Data-driven surrogates of value functions and applications to
  feedback control for dynamical systems.
\newblock {\em IFAC-PapersOnLine}, 51(2):307--312, 2018.
\newblock 9th Vienna International Conference on Mathematical Modelling.

\bibitem{Schoelkopf2001v}
B.~Sch{\"o}lkopf, R.~Herbrich, and A.~J. Smola.
\newblock A generalized representer theorem.
\newblock In D.~Helmbold and B.~Williamson, editors, {\em Computational
  Learning Theory}, pages 416--426, Berlin, Heidelberg, 2001. Springer Berlin
  Heidelberg.

\bibitem{SS02}
B.~Sch{\"o}lkopf and A.~Smola.
\newblock {\em Learning with Kernels}.
\newblock The MIT Press, 2002.

\bibitem{Shawe-Taylor2004}
J.~Shawe-Taylor and N.~Cristianini.
\newblock {\em Kernel Methods for Pattern Analysis}.
\newblock Cambridge University Press, 2004.

\bibitem{Steinwart2008}
I.~Steinwart and A.~Christmann.
\newblock {\em Support Vector Machines}.
\newblock Science + Business Media. Springer, 2008.

\bibitem{Steinwart2006}
I.~Steinwart, D.~Hush, and C.~Scovel.
\newblock An explicit description of the {R}eproducing {K}ernel {H}ilbert
  {S}paces of {G}aussian {RBF} kernels.
\newblock {\em IEEE Transactions on Information Theory}, 52(10):4635--4643, Oct
  2006.

\bibitem{Steinwart2011}
I.~Steinwart, D.~Hush, and C.~Scovel.
\newblock Training {SVM}s {W}ithout {O}ffset.
\newblock {\em J. Mach. Learn. Res.}, 12:141--202, February 2011.

\bibitem{liquidsvm}
I.~Steinwart and P.~Thomann.
\newblock {l}iquid{SVM}: {A} fast and versatile {SVM} package.
\newblock ArXiv 1702.06899, 2017.

\bibitem{Suykens2001}
J.~Suykens, J.~Vanderwalle, and B.~D. Moor.
\newblock Optimal control by least squares support vector machines.
\newblock {\em Neural Networks}, 14:23--35, 2001.

\bibitem{TPYP2016}
T.~Taddei, J.~D. Penn, M.~Yano, and A.~T. Patera.
\newblock Simulation-based classification; a model-order-reduction approach for
  structural health monitoring.
\newblock {\em Archives of Computational Methods in Engineering}, pages 1--23,
  2016.

\bibitem{Tibshirani1996}
R.~Tibshirani.
\newblock Regression shrinkage and selection via the {LASSO}.
\newblock {\em Journal of the Royal Statistical Society. Series B
  (Methodological)}, 58(1):267--288, 1996.

\bibitem{Vapnik1998}
V.~Vapnik.
\newblock {\em Statistical Learning Theory}.
\newblock John Wiley \& Sons, New York, 1998.

\bibitem{Wendland1995a}
H.~Wendland.
\newblock Piecewise polynomial, positive definite and compactly supported
  radial functions of minimal degree.
\newblock {\em Adv. Comput. Math.}, 4(1):389--396, 1995.

\bibitem{Wendland2002}
H.~Wendland.
\newblock Fast evaluation of radial basis functions: methods based on partition
  of unity.
\newblock In {\em Approximation theory, {X} ({S}t. {L}ouis, {MO}, 2001)},
  Innov. Appl. Math., pages 473--483. Vanderbilt Univ. Press, Nashville, TN,
  2002.

\bibitem{Wendland2005}
H.~Wendland.
\newblock {\em Scattered {D}ata {A}pproximation}, volume~17 of {\em Cambridge
  Monographs on Applied and Computational Mathematics}.
\newblock Cambridge University Press, Cambridge, 2005.

\bibitem{WendlandRieger2005}
H.~Wendland and C.~Rieger.
\newblock Approximate interpolation with applications to selecting smoothing
  parameters.
\newblock {\em Numerische Mathematik}, 101(4):729--748, 2005.

\bibitem{Wirtz2012a}
D.~Wirtz and B.~Haasdonk.
\newblock A-posteriori error estimation for parameterized kernel-based systems.
\newblock In {\em Proc. MATHMOD 2012 - 7th Vienna International Conference on
  Mathematical Modelling}, 2012.

\bibitem{Wirtz2013}
D.~Wirtz and B.~Haasdonk.
\newblock A vectorial kernel orthogonal greedy algorithm.
\newblock {\em Dolomites Res. Notes Approx.}, 6:83--100, 2013.

\bibitem{Wirtz2015a}
D.~Wirtz, N.~Karajan, and B.~Haasdonk.
\newblock Surrogate modelling of multiscale models using kernel methods.
\newblock {\em International Journal of Numerical Methods in Engineering},
  101(1):1--28, 2015.

\bibitem{Wittwar2018}
D.~Wittwar, G.~Santin, and B.~Haasdonk.
\newblock Interpolation with uncoupled separable matrix-valued kernels.
\newblock {\em Dolomites Res. Notes Approx.}, 11:23--29, 2018.

\bibitem{ZhZh2013}
H.~Zhang, C. and L.~Zhao, C.-J.
\newblock On the inclusion relation of reproducing kernel {H}ilbert spaces.
\newblock {\em Analysis and Applications}, 11, 2013.

\end{thebibliography}

\end{document}